\def\a{\alpha}
\def\b{\beta}
\def\la{\lambda}
\def\k{\kappa}
\def\ph{\varphi}
\def\O{\mathcal{O}}
\def\W{\mathcal{W}}
\def\N{\mathbb{N}}
\def\R{\mathbb{R}}
\def\Z{\mathbb{Z}}
\def\PII{\mbox{\rm P$_{\rm II}$}}
\def\Ai{\mathop{\rm Ai}\nolimits}
\def\Bi{\mathop{\rm Bi}\nolimits}
\def\d{{\rm d}}
\def\e{{\rm e}}
\def\i{\ifmmode{\rm i}\else\char"10\fi}
\def\p{Painlev\'e}
\def\w{\omega}
\newcommand{\HyperpFq}[2]{{}_{#1}F_{#2}}
\def\v{v}
\def\ww#1{\w(#1;t)}
\def\vx{v'(x)}
\def\vy{v'(y)}
\def\ep{\varepsilon}
\def\ds{\displaystyle}
\def\dx{\d x}\def\dy{\d y}
\def\Kxy{\mathcal{K}(x,y)}
\def\intS{\int_0^{\infty}}
\def\imp{\int_{-\infty}^{\infty}}
\def\intS{\int_{0}^{\infty}}
\def\etal{\textit{et al.}}
\newcommand{\deriv}[3][]{\frac{\d^{#1}{#2}}{{\d{#3}}^{#1}}}
\newtheorem{theorem}{Theorem}[section]
\newtheorem{lemma}[theorem]{Lemma}
\newtheorem{corollary}[theorem]{Corollary}
\theoremstyle{definition}
\newtheorem{remark}[theorem]{Remark}
\newtheorem{conjecture}[theorem]{Conjecture}
\numberwithin{figure}{section}
\numberwithin{equation}{section}
\numberwithin{table}{section}
\def\ode{ordinary differential equation}
\def\beq{\begin{equation}}
\def\eeq{\end{equation}}
\newcommand{\comment}[1]{}
\def\fig#1{\includegraphics[width=2in]{#1}}
\begin{document}
\title{Generalised Airy Polynomials}

\author{Peter A. Clarkson$^{1}$ and Kerstin Jordaan$^{2}$\\[2.5pt]
$^{1}$ School of Mathematics, Statistics and Actuarial Science\\ University of Kent, Canterbury, CT2 7FS, UK\\ \texttt{P.A.Clarkson@kent.ac.uk}\\ Orcid: 0000-0002-8777-5284\\[2.5pt]
$^{2}$ Department of Decision Sciences\\
University of South Africa, Pretoria, 0003, South Africa\\ 
\texttt{jordakh@unisa.ac.za}\\ Orcid: 0000-0002-1675-5366}

\maketitle
\begin{itemize}
\item[]\textbf{Keywords}: Semi-classical orthogonal polynomials, generalised Airy weight; generalised sextic Freud weight; moments; recurrence coefficients; zeros; asymptotics
\item[]\textbf{Mathematics Subject Classification (2010)}: 33C47, 34E99, 42C05, 65Q99
\end{itemize} 

\begin{abstract}
We consider properties of semi-classical orthogonal polynomials with respect to the generalised Airy weight 
\[\omega(x;t,\lambda)=x^{\lambda}\exp\left(-\tfrac13x^3+tx\right),\qquad x\in \mathbb{R}^+\] with parameters $\lambda>-1$ and $t\in \mathbb{R}$. We also investigate the zeros and recurrence coefficients of the polynomials. The generalised sextic Freud weight
\[\omega(x;t,\lambda)=|x|^{2\lambda+1}\exp\left(-x^6+tx^2\right), \qquad x\in \mathbb{R}\] arises from a symmetrisation of the generalised Airy weight and we study analogous properties of the polynomials orthogonal with respect to this weight. 
\end{abstract}
\section{Introduction}

{Suppose $P_n(x)$, for $n\in\N$, 
is a sequence of \textit{classical} orthogonal polynomials (such as Hermite, Laguerre and Jacobi polynomials), then $P_n(x)$ is a solution of a second-order \ode\ of the form}
\beq \label{eq:Pn}
\sigma(x)\deriv[2]{P_n}{x}+\tau(x)\deriv{P_n}{x}=\lambda_nP_n
\eeq 
where $\sigma(x)$ is a monic polynomial with deg$(\sigma)\leq2$, $\tau(x)$ is a polynomial with deg$(\tau)=1$, and $\lambda_n$ is a real number which depends on the degree of the polynomial solution, see Bochner \cite{refBochner}. Equivalently, the weights of classical orthogonal polynomials satisfy a first-order \ode, the \textit{Pearson equation}
\beq \label{eq:Pearson}
\deriv{}{x}[\sigma(x)\w(x)]=\tau(x)\w(x)
\eeq 
with $\sigma(x)$ and $\tau(x)$ the same polynomials as in \eqref{eq:Pn}, see, for example \cite{refAlNod,refBochner,refChihara}. 
\comment{The weights of classical orthogonal polynomials satisfy a first-order ordinary differential equation, the \textit{Pearson equation}
\beq\label{eq:Pearson}
 \deriv{}{x}[\sigma(x)\w(x)]=\tau(x)\w(x)
 \eeq
 where $\sigma(x)$ is a monic polynomials of degree at most $2$ and $\tau(x)$ is a polynomial with degree $1$.}%

For \textit{semi-classical} orthogonal polynomials, the weight function $\w(x)$ satisfies the Pearson equation \eqref{eq:Pearson} with either deg$(\sigma)>2$ or deg$(\tau)\neq 1$ (cf.~\cite{refHvR,refMaroni}). 
For example, the generalised Airy weight 
\beq\label{genAiry}
 \w(x;t,\la)=x^{\la}\exp\left(-\tfrac13x^3+tx\right), 
\eeq with parameters $\lambda>-1$ and $t\in \mathbb{R}$,
satisfies the Pearson equation \eqref{eq:Pearson} with 
 \[\sigma(x)=x,\qquad\tau(x)=-x^3+tx+\la+1\] 
and the generalised sextic Freud weight
\beq\label{freud6g}
\w(x;t,\la)=|x|^{2\la+1}\exp\left(-x^6+tx^2\right),\qquad x\in \R\eeq 
with $\la>-1$ and $t\in \R$ parameters,
satisfies \eqref{eq:Pearson} with \[\sigma(x)=x,\qquad\tau(x)=2\la+2+2tx^2-6x^6.\] 
Orthogonal polynomials associated with the exponential cubic weight 
\beq \w(x)=\exp(-x^3),\qquad x\in\mathcal{C}\label{cubic}\eeq
where $\mathcal{C}$ a contour in the complex plane, were investigated in \cite{refDeano,refMFS,refWVAFZ}, 
while the semiclassical weight 
\beq\w(x;t)=\exp\left(-\tfrac13x^3+tx\right),\qquad x\in\mathcal{C}\label{tcubic}\eeq with $t\in\R$ and $\mathcal{C}$ is a contour in the complex plane, was discussed in \cite{refBD13,refBD16,refBDY,refCAiry,refCLVA,refDeano,refMagnus95}. These studies of the weights \eqref{cubic} and \eqref{tcubic} are for contours in the complex plane. In contrast, in this paper, we study orthogonal polynomials associated with an exponential cubic weight on the positive real axis.

 We are concerned with semi-classsical orthogonal polynomials associated with the generalised Airy weight \eqref{genAiry} as well as generalised sextic Freud polynomials associated with the weight function \eqref{freud6g} that arise from a symmetrisation of generalised Airy polynomials. In \S\ref{sec:genairy} we consider some properties of generalised Airy polynomials, their recurrence coefficients and their zeros. In \S\ref{sec:rc} we consider the recurrence coefficients of polynomials orthogonal with respect to \eqref{genAiry} and correct some of the results in Wang \etal\ \cite{refWZC20b}. Properties of generalised sextic Freud polynomials and their zeros are considered in \S\ref{sec:gen6freud}. These polynomials were also investigated in \cite{refCJ20,refWZC20a} and we correct one of the results in \cite{refWZC20a} in \S \ref{gen6feq}.
  
\section{Orthogonal polynomials}
Let $\mu$ be a positive Borel measure with support $S$ defined on $\R$ for which moments of all orders exist, that is
\begin{equation}\label{eq:moment}\mu_n=\int_{S}  x^n\, \d\mu(x), \quad n=0,1,2\dots.\end{equation}
When the sequence $\{\mu_n\}_{n\geq 0}$ is positive, that is for all $n\in\{0,1,2,..\}$ the Hankel determinant
 \begin{equation}\label{eq:dets}
 \Delta_n=\left|\begin{array}{cccc}
 \mu_0&\mu_1& \ldots &\mu_{n-1}\\
 \mu_1&\mu_2&\ldots&\mu_{n}\\
\vdots & \vdots& \ddots &\vdots\\
 \mu_{n-1}&\mu_{n}&\ldots &\mu_{2n-2}
 \end{array}
 \right|,\qquad n\geq1
 \end{equation}
 is positive, the family of monic polynomials 
 
 \begin{equation*}
 P_n(x)=\frac{1}{\Delta_n}\left|\begin{array}{cccc}
 \mu_0&\mu_1&\ldots&\mu_n\\
 \mu_1&\mu_2&\ldots&\mu_{n+1}\\
\vdots & \vdots& \ddots &\vdots\\
 \mu_{n-1}&\mu_{n}&\ldots &\mu_{2n-1}\\
 1&x&\ldots &x^n
 \end{array}
 \right|
 \end{equation*}
 is orthogonal with respect to the measure $\mu$ on its support, i.e. \beq \nonumber\int_S P_m(x)P_n(x)\,\d \mu(x) = h_n\delta_{m,n},\qquad h_n>0\label{eq:norm}\eeq
 where $\delta_{m,n}$ denotes the Kronecker delta. The sequence $\{P_n(x)\}_{n=0}^{\infty}$ satisfies the three-term recurrence relation 
  \begin{equation}\label{eq:3trr}xP_n(x)=P_{n+1}(x)+\a_nP_n(x)+\b_{n}P_{n-1}(x)\end{equation}
 with initial conditions $P_{-1}(x)=0$ and $P_0(x)=1$ and where
\beq\label{eq:anbn}  \a_n = \frac{1}{h_n}\int_S xP_n^2(x) \d\mu(x),\qquad
 \b_n =\frac{\Delta_{n-1}\Delta_{n+1}}{\Delta_n^2} >0.\eeq Additional information about orthogonal polynomials can be found in, for example, \cite{refChihara,refIsmail,refSzego}.

Now suppose that the measure is absolutely continuous and has the form
\[\d \mu(x)=\w(x;t,\la)\,\dx\]
where \beq \label{scweight}\w(x;t,\la)=x^{\la}\w_0(x)\exp(xt),\qquad x\in\R^+,\qquad\la>-1\eeq 
with finite moments for all $t\in\R$, which is the case for the generalised Airy weight \eqref{genAiry}.
If the weight has the form \eqref{scweight}, which depends on the parameters $t$ and $\la$, then the orthogonal polynomials $P_n$, 
the recurrence coefficients $\a_n$, $\b_n$ given by \eqref{eq:anbn},
the determinant $\Delta_n$ given by \eqref{eq:dets} and 
the moments $\mu_k$ given by \eqref{eq:moment} are now functions of $t$ and $\la$. 
Specifically, in this case then
\begin{equation}\label{mu0}
\mu_k(t;\la) 
= \intS x^{k+\la} \w_0(x)\exp(xt)\,\d x 
=\deriv[k]{}{t}\left( \intS x^{\la}\w_0(x)\exp(xt)\,\d x\right)
=\deriv[k]{\mu_0(t;\la)}{t}
.\end{equation} Further, the recurrence relation has the form
\beq \label{eq:scrr}xP_n(x;t,\la)=P_{n+1}(x;t,\la)+\a_n(t;\la)P_n(x;t,\la)+\b_n(t;\la)P_{n-1}(x;t,\la)\eeq 
where we have explicitly indicated that the coefficients $\a_n(t;\la)$ and $\b_n(t;\la)$ depend on $t$ and $\la$.

\begin{theorem}{\label{thm:2.1}If the weight has the form \eqref{scweight}, then the determinant $\Delta_n(t;\la)$  given by \eqref{eq:dets} can be written as 
\begin{align}\label{scHank}
\Delta_n(t;\la)&=\W\left(\mu_0,\deriv{\mu_0}t,\ldots,\deriv[n-1]{\mu_0}t\right).
\end{align}
where $\W(\ph_1,\ph_2,\ldots,\ph_n)$ is the Wronskian given by
\[\W(\ph_1,\ph_2,\ldots,\ph_n)=\left|\begin{matrix} 
\ph_1 & \ph_2 & \ldots & \ph_n\\
\ph_1^{(1)} & \ph_2^{(1)} & \ldots & \ph_n^{(1)}\\
\vdots & \vdots & \ddots & \vdots \\
\ph_1^{(n-1)} & \ph_2^{(n-1)} & \ldots & \ph_n^{(n-1)}
\end{matrix}\right|,\qquad \ph_j^{(k)}=\deriv[k]{\ph_j}{t}.\]
}\end{theorem}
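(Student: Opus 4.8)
The plan is to exploit the key identity \eqref{mu0}, namely that for a weight of the form \eqref{scweight} the $k$-th moment is the $k$-th $t$-derivative of $\mu_0$, and then recognise the Hankel determinant \eqref{eq:dets} as a Wronskian. First I would write out the entry in row $i$, column $j$ (with $i,j$ running from $1$ to $n$) of the Hankel matrix: it is $\mu_{i+j-2}(t;\la)$. By \eqref{mu0} this equals $\ds\deriv[i+j-2]{\mu_0}{t}$. So the Hankel matrix is the matrix whose $(i,j)$ entry is the $(i+j-2)$-th derivative of the single function $\mu_0(t;\la)$ with respect to $t$.

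Next I would compare this with the Wronskian matrix of the $n$ functions $\ph_j=\ds\deriv[j-1]{\mu_0}{t}$ for $j=1,\ldots,n$, whose $(i,j)$ entry is $\ph_j^{(i-1)}=\ds\deriv[i-1]{}{t}\!\left(\deriv[j-1]{\mu_0}{t}\right)=\deriv[i+j-2]{\mu_0}{t}$. These are literally the same matrices entry-by-entry, so their determinants coincide, giving \eqref{scHank} at once. The only point requiring a word of care is that the interchange of differentiation and integration used to obtain \eqref{mu0} is legitimate; this is granted by the hypothesis that the moments $\mu_k(t;\la)$ are finite for all $t\in\R$ (indeed they are smooth functions of $t$, since differentiating under the integral sign repeatedly is justified by the local uniform integrability of $x^{k+\la}\w_0(x)\e^{xt}$ on compact $t$-intervals), so in particular $\mu_0(t;\la)$ is $C^\infty$ in $t$ and its derivatives are exactly the higher moments.

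There is essentially no main obstacle here: the statement is a direct translation of \eqref{mu0} into determinant language, and the proof is a one-line matching of matrix entries once \eqref{mu0} is in hand. If anything, the only thing to be careful about is bookkeeping of indices (the Hankel determinant $\Delta_n$ has size $n$ and involves moments $\mu_0,\ldots,\mu_{2n-2}$, while the Wronskian involves $\mu_0$ and its first $n-1$ derivatives, i.e.\ $\mu_0,\ldots,\mu_{n-1}$ as functions — consistent because the $(i,j)$ entry ranges over all derivatives of order $0$ through $2n-2$), and perhaps remarking that positivity of $\Delta_n$ (from \eqref{eq:anbn}) guarantees the Wronskian is nonzero so the orthogonal polynomials are well defined.
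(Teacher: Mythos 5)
Your proof is correct and is essentially the standard argument behind this result: the paper itself only cites \cite[Theorem 2.1]{refCJ14}, and the proof there is exactly your observation that the $(i,j)$ entry $\mu_{i+j-2}$ of the Hankel matrix equals $\deriv[i+j-2]{\mu_0}{t}$ by \eqref{mu0}, so the Hankel and Wronskian matrices coincide entry-by-entry. Nothing further is needed.
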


\begin{proof}{See, for example, \cite[Theorem 2.1]{refCJ14} }\end{proof}

The Hankel determinant $\Delta_n(t;\la)$ satisfies the Toda equation, as shown in the following theorem.

\begin{theorem}{\label{thm:toda}The Hankel determinant $\Delta_n(t;\la)$ given by \eqref{scHank} satisfies the Toda equation
\[
\deriv[2]{}{t}\ln\Delta_n(t;\la)=\frac{\Delta_{n-1}(t;\la)\Delta_{n+1}(t;\la)}{\Delta_{n}^2(t;\la)}.
\]
}\end{theorem}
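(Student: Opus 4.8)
The plan is to exploit the Wronskian representation \eqref{scHank} for $\Delta_n(t;\la)$ together with the fact that, by \eqref{mu0}, the entries of this Wronskian are successive $t$-derivatives of the single function $\mu_0(t;\la)$. Writing $\varphi = \mu_0(t;\la)$, we have $\Delta_n = \W\bigl(\varphi,\varphi',\ldots,\varphi^{(n-1)}\bigr)$, so the columns of the defining matrix are shifts of one another: the Wronskian of $n$ consecutive derivatives of $\varphi$ is what is sometimes called a \emph{Turanian} or \emph{Hankel--Wronskian} determinant. The key structural fact I would use is the classical Jacobi (Desnanot--Jacobi / Dodgson condensation) identity for such determinants, which relates $\Delta_n$, $\Delta_{n\pm1}$, and the first and second $t$-derivatives of $\Delta_n$.

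First I would set up notation: let $H_n(t) = \W\bigl(\varphi,\varphi',\ldots,\varphi^{(n-1)}\bigr)$ so that $\Delta_n = H_n$. The second step is to record the two standard identities for Hankel--Wronskian determinants built from a single function. One is
\[
\deriv{H_n}{t} = \W\bigl(\varphi,\varphi',\ldots,\varphi^{(n-2)},\varphi^{(n)}\bigr),
\]
obtained because differentiating the determinant column-by-column produces a telescoping sum in which all terms but one have a repeated column. The other is the Jacobi identity applied to the matrix for $H_{n+1}$, deleting the first/last rows and columns, which after identifying the resulting minors with $H_n$, $H_{n-1}$, $\deriv{H_n}{t}$ and $\deriv[2]{H_n}{t}$ yields
\[
H_{n+1}H_{n-1} = H_n\,\deriv[2]{H_n}{t} - \left(\deriv{H_n}{t}\right)^2 .
\]
The third step is purely algebraic: divide this relation by $H_n^2$ and observe that the right-hand side is exactly $\deriv[2]{}{t}\ln H_n$, since $\deriv[2]{}{t}\ln H_n = \bigl(H_n H_n'' - (H_n')^2\bigr)/H_n^2$. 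Substituting $H_n = \Delta_n$ gives the claimed Toda equation. As an alternative to the Jacobi-identity route, one could instead differentiate $\b_n = \Delta_{n-1}\Delta_{n+1}/\Delta_n^2$ from \eqref{eq:anbn} and use the known Toda-flow relations $\deriv{}{t}\ln(\Delta_{n+1}/\Delta_n) = \a_n$ together with $\deriv{\a_n}{t} = \b_{n+1}-\b_n$ coming from differentiating the recurrence \eqref{eq:scrr} and using \eqref{mu0}; but the Wronskian/Jacobi approach is cleaner here given that \eqref{scHank} is already in hand.

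The main obstacle is verifying that the minors produced by the Desnanot--Jacobi identity, when applied to the $(n+1)\times(n+1)$ Wronskian matrix of $\bigl(\varphi,\ldots,\varphi^{(n)}\bigr)$, really are the four determinants $\Delta_{n\pm1}$, $\deriv{\Delta_n}{t}$ and $\deriv[2]{\Delta_n}{t}$ and not merely determinants of the right size. Deleting the first row and first column leaves the Wronskian of $\bigl(\varphi',\ldots,\varphi^{(n)}\bigr)$, which is again a Hankel--Wronskian determinant and equals $H_n$ evaluated on the shifted seed; one must check this equals $H_n$ up to the bookkeeping, and that the mixed minor (delete first row, last column, say) gives $\deriv{H_n}{t}$ via the column-differentiation identity above, while deleting first and last rows and the first and last columns gives $H_{n-1}$. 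Once these identifications are pinned down, the rest is immediate. For brevity I would cite a standard reference for the Hankel--Wronskian Jacobi identity (e.g.\ the same circle of results as in \cite{refCJ14}) rather than reproving it.
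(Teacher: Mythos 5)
Your overall strategy is sound and is in fact the standard argument behind the result, which the paper itself does not prove but simply cites (Nakamura--Zhedanov, Proposition 1; Chen--Ismail): one proves the bilinear identity $\Delta_{n+1}\Delta_{n-1}=\Delta_n\Delta_n''-(\Delta_n')^2$ for the Hankel--Wronskian $\Delta_n=\W\bigl(\varphi,\varphi',\ldots,\varphi^{(n-1)}\bigr)$, $\varphi=\mu_0$, by Desnanot--Jacobi condensation, and divides by $\Delta_n^2$. Your first identity, $\Delta_n'=\W\bigl(\varphi,\ldots,\varphi^{(n-2)},\varphi^{(n)}\bigr)$, is correct. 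The genuine gap is in the condensation you specify: applying the Jacobi identity to $M=(\varphi^{(i+j)})_{i,j=0}^{n}$ with the \emph{first and last} rows and columns deleted does not produce the four quantities you need. With that choice the minors are the shifted Hankel determinants $\W(\varphi',\ldots,\varphi^{(n)})$, $\W(\varphi'',\ldots,\varphi^{(n+1)})$ and $\W(\varphi'',\ldots,\varphi^{(n)})$, none of which equals $\Delta_n'$, $\Delta_n''$ or $\Delta_{n-1}$; for instance, at $n=2$ the delete-first-row/last-column minor is $\mu_1\mu_3-\mu_2^2$, whereas $\Delta_2'=\mu_0\mu_3-\mu_1\mu_2$, and the central minor is built from $\varphi''$, not $\Delta_{n-1}$. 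So the identification step you yourself flag as the main obstacle actually fails for the rows and columns you chose, and the argument as written does not close.

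The repair is to condense on the \emph{last two} rows and columns of $M$. Then: deleting the last row and column gives $\Delta_n$; deleting the last two rows and columns gives $\Delta_{n-1}$; deleting the row of derivative order $n-1$ together with the column $\varphi^{(n)}$ (or its transpose counterpart, which is equal because $M$ is symmetric) gives exactly $\W\bigl(\varphi,\ldots,\varphi^{(n-2)},\varphi^{(n)}\bigr)=\Delta_n'$, by your row-differentiation identity; and deleting the order-$(n-1)$ row together with the column $\varphi^{(n-1)}$ gives $\Delta_n''$, which you see by differentiating $\Delta_n'$ once more, this time column-by-column, so that only the last column survives. With these identifications Desnanot--Jacobi reads $\Delta_{n+1}\Delta_{n-1}=\Delta_n\Delta_n''-(\Delta_n')^2$, and the Toda equation follows as you say. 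Your alternative route through $\b_n=\Delta_{n-1}\Delta_{n+1}/\Delta_n^2$ and the Toda system is also viable (it is essentially the Chen--Ismail derivation, via $\tfrac{\d}{\d t}\ln h_k=\a_k$ and telescoping $\a_k'=\b_{k+1}-\b_k$), but it requires first establishing Theorem \ref{thm:todasys}, which in this paper appears only afterwards and is likewise quoted from the literature.
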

\begin{proof}{See, for example, Nakamira and Zhedanov \cite[Proposition 1]{refNZ}; also \cite{refCI97}.}\end{proof}

Using Theorems \ref{thm:2.1} and \ref{thm:toda}, we can express the recurrence coefficients $\a_n(t;\la)$ and $\b_n(t;\la)$ in terms of derivatives of the Hankel determinant $\Delta_n(t;\la)$ and so obtain explicit expressions for these coefficients.

\begin{theorem}{\label{thm:anbn}The coefficients $\a_n(t;\la)$ and $\b_n(t;\la)$ in the recurrence relation \eqref{eq:scrr}
associated with monic polynomials orthogonal with respect to a weight of the form \eqref{scweight} are given by
\[ \a_n(t;\la) =\deriv{}{t}\ln \frac{\Delta_{n+1}(t;\la)}{\Delta_{n}(t;\la)},\qquad 
\b_n(t;\la) =
\deriv[2]{}{t}\ln\Delta_n(t;\la) \]
with $\Delta_n(t;\la)$ is the Hankel determinant given by \eqref{scHank}.
}\end{theorem}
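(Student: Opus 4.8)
The plan is to combine the Wronskian representation of $\Delta_n$ from Theorem~\ref{thm:2.1} with two standard identities: the Toda equation from Theorem~\ref{thm:toda} and the classical formula $\b_n=\Delta_{n-1}\Delta_{n+1}/\Delta_n^2$ from \eqref{eq:anbn}. For $\b_n$ the statement is then immediate: Theorem~\ref{thm:toda} says precisely that $\partial_t^2\ln\Delta_n(t;\la)=\Delta_{n-1}\Delta_{n+1}/\Delta_n^2$, and the right-hand side equals $\b_n(t;\la)$ by \eqref{eq:anbn}. So the only real content is the formula for $\a_n$.

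For $\a_n$ I would first recall the standard determinantal identity $\a_n=\dfrac{\wt\Delta_{n+1}}{\Delta_{n+1}}-\dfrac{\wt\Delta_n}{\Delta_n}$, where $\wt\Delta_n$ is the Hankel determinant built from the shifted moments with one extra index (equivalently, the determinant obtained from $\Delta_{n+1}$ by deleting the penultimate row and last column). The key observation, which uses the special structure \eqref{scweight} of the weight, is that by \eqref{mu0} every moment is a $t$-derivative of $\mu_0$, so $\wt\Delta_n$ is again a Wronskian-type determinant, and in fact $\wt\Delta_n=\partial_t\Delta_n$: differentiating the Wronskian $\W(\mu_0,\mu_0',\dots,\mu_0^{(n-1)})$ with respect to $t$ and using the Leibniz rule for determinants (the derivative of a determinant is the sum of determinants obtained by differentiating one row at a time), all but the last of these $n$ determinants vanish because two rows coincide, and the surviving one is exactly the Hankel-type determinant $\wt\Delta_n$. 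Hence $\a_n=\partial_t\ln\Delta_{n+1}-\partial_t\ln\Delta_n=\partial_t\ln(\Delta_{n+1}/\Delta_n)$, as claimed.

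The main obstacle is getting the $\a_n$ argument clean: one must correctly identify the auxiliary determinant $\wt\Delta_n$ appearing in the formula $\a_n=\wt\Delta_{n+1}/\Delta_{n+1}-\wt\Delta_n/\Delta_n$ and verify that it coincides with $\partial_t\Delta_n$. This hinges entirely on \eqref{mu0}, i.e. on the fact that for weights of the form \eqref{scweight} the map $x^k\w_0(x)x^\la\mapsto$ moment intertwines multiplication by $x$ with $\partial_t$. Rather than reprove the identity for $\a_n$ from scratch, the efficient route is to cite the standard reference (e.g. \cite{refCJ14} or the discussion around the Toda flow in \cite{refNZ,refCI97}) for $\a_n=\wt\Delta_{n+1}/\Delta_{n+1}-\wt\Delta_n/\Delta_n$ and then supply the short Leibniz-rule computation showing $\wt\Delta_n=\partial_t\Delta_n$; alternatively one can note that $\a_n=\partial_t\ln h_n$ with $h_n=\Delta_{n+1}/\Delta_n$, which follows from differentiating $h_n=\int_S P_n^2\,\d\mu$ under the integral sign using $\partial_t\w=x\w$ and the orthogonality $\int_S xP_n^2\,\d\mu=\a_n h_n$ together with $\int_S P_n(\partial_t P_n)\,\d\mu=0$ (the latter because $\partial_t P_n$ has degree $<n$). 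This second route is self-contained and avoids any reference to auxiliary Hankel determinants, so I would present it as the main argument, reserving the Wronskian-Leibniz computation as the justification that $h_n=\Delta_{n+1}/\Delta_n$ agrees with the stated ratio.
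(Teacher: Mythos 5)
Your proposal is correct and is essentially the argument the paper appeals to: the paper offers no proof beyond citing Chen and Ismail, and your derivation — $\beta_n$ immediate from Theorem~\ref{thm:toda} together with $\beta_n=\Delta_{n-1}\Delta_{n+1}/\Delta_n^2$ in \eqref{eq:anbn}, and $\alpha_n=\partial_t\ln(\Delta_{n+1}/\Delta_n)$ obtained from $\alpha_n=\partial_t\ln h_n$ (using $\partial_t\omega=x\,\omega$, orthogonality of $P_n$ against $\partial_t P_n$ which has degree $<n$ since $P_n$ is monic, and $h_n=\Delta_{n+1}/\Delta_n$) — is precisely that standard ladder/Toda computation. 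One cosmetic remark: $h_n=\Delta_{n+1}/\Delta_n$ is a purely algebraic identity for monic orthogonal polynomials and needs no Wronskian--Leibniz argument; that computation (showing $\widetilde{\Delta}_n=\partial_t\Delta_n$) is only needed for your alternative route through the shifted Hankel determinants, so the closing sentence of your write-up slightly conflates the two.
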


\begin{proof}{See Chen and Ismail \cite{refCI97}.}\end{proof}

\comment{Equivalently the recurrence coefficients $\a_n(t;\la)$ and $\b_n(t;\la)$ can be expressed in terms of $h_n(t;\la)$ given by \eqref{def:norm}.
\begin{lemma}{\label{thm:anbn2}The coefficients $\a_n(t;\la)$ and $\b_n(t;\la)$ in the recurrence relation \eqref{eq:scrr}
associated with monic polynomials orthogonal with respect to a weight of the form \eqref{scweight} are given by
\beq \label{def:anbn2} \a_n(t;\la) =\deriv{}{t}\ln h_{n}(t;\la),\qquad 
\b_n(t;\la) =\frac{h_{n+1}(t;\la)}{h_{n}(t;\la)}\eeq 
where $h_n(t;\la)$ is given by \eqref{def:norm}.
}\end{lemma}
\begin{proof}{See Chen and Ismail \cite{refCI97}.}\end{proof}}%

Additionally the coefficients $\a_n(t;\la)$ and $\b_n(t;\la)$ in the recurrence relation \eqref{eq:scrr} satisfy a Toda system.

\begin{theorem}{\label{thm:todasys}
The coefficients $\a_n(t;\la)$ and $\b_n(t;\la)$ in the recurrence relation \eqref{eq:scrr} associated with a weight of the form \eqref{scweight} satisfy the Toda system
\[ \deriv{\a_n}t=\b_{n+1}-\b_n,\qquad\deriv{\b_n}t=\b_n(\a_n-\a_{n-1}).\]
}\end{theorem}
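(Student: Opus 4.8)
The plan is to obtain both equations of the Toda system as formal consequences of the closed-form expressions for $\a_n(t;\la)$ and $\b_n(t;\la)$ furnished by Theorem~\ref{thm:anbn}, combined with the Toda equation for the Hankel determinant (Theorem~\ref{thm:toda}) and the identity $\b_n=\Delta_{n-1}\Delta_{n+1}/\Delta_n^2$ recorded in \eqref{eq:anbn}. No new analytic ingredient is required: the whole argument amounts to differentiating logarithms of $\Delta_n(t;\la)$ with respect to $t$.

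First I would establish $\deriv{\a_n}{t}=\b_{n+1}-\b_n$. From Theorem~\ref{thm:anbn} we may write $\a_n=\deriv{}{t}\ln\Delta_{n+1}-\deriv{}{t}\ln\Delta_n$; differentiating once more in $t$ gives $\deriv{\a_n}{t}=\deriv[2]{}{t}\ln\Delta_{n+1}-\deriv[2]{}{t}\ln\Delta_n$, and by Theorem~\ref{thm:anbn} the two terms on the right are precisely $\b_{n+1}$ and $\b_n$. This step is immediate.

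Next I would treat $\deriv{\b_n}{t}=\b_n(\a_n-\a_{n-1})$. Using the first expression in Theorem~\ref{thm:anbn} twice, $\a_n-\a_{n-1}=\deriv{}{t}\ln\frac{\Delta_{n+1}}{\Delta_n}-\deriv{}{t}\ln\frac{\Delta_n}{\Delta_{n-1}}=\deriv{}{t}\ln\!\bigl(\Delta_{n-1}\Delta_{n+1}/\Delta_n^2\bigr)$. By \eqref{eq:anbn} (equivalently by Theorem~\ref{thm:toda}) the argument of this logarithm is exactly $\b_n$, which is strictly positive so that $\ln\b_n$ is well defined; hence $\a_n-\a_{n-1}=\deriv{}{t}\ln\b_n=\b_n^{-1}\deriv{\b_n}{t}$, and multiplying through by $\b_n$ yields the claimed relation.

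The computation is routine, and I do not anticipate a genuine obstacle. The only point needing a little care is the bookkeeping of which Hankel determinants appear and the legitimacy of repeatedly differentiating in $t$; the latter is underwritten by the Wronskian representation of Theorem~\ref{thm:2.1} together with the smooth dependence of the moments on $t$, since $\mu_k(t;\la)=\deriv[k]{\mu_0(t;\la)}{t}$ by \eqref{mu0}. Thus the Toda system for $\a_n$ and $\b_n$ follows directly from Theorems~\ref{thm:toda} and~\ref{thm:anbn}.
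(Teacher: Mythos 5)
Your derivation is correct: both Toda equations do follow by pure logarithmic differentiation once one grants $\a_n=\deriv{}{t}\ln(\Delta_{n+1}/\Delta_n)$ and $\b_n=\deriv[2]{}{t}\ln\Delta_n$ from Theorem~\ref{thm:anbn}, together with $\b_n=\Delta_{n-1}\Delta_{n+1}/\Delta_n^2>0$ from \eqref{eq:anbn} (whose consistency with the second-derivative formula is exactly Theorem~\ref{thm:toda}), and the legitimacy of the $t$-differentiations is indeed covered by the Wronskian/moment structure of \eqref{mu0} and \eqref{scHank}. The paper itself gives no argument here: it simply cites Chen and Ismail and Ismail's book, where the Toda system is usually obtained directly from the orthogonality relations, by differentiating the normalisation constants $h_n(t;\la)$ and the integrals defining $\a_n$, $\b_n$ with respect to $t$ and using the three-term recurrence. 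So your route is genuinely different in presentation: it trades that direct computation for a two-line consequence of results already stated in \S2, which is tidy and self-contained within the paper's logical structure; the price is that it leans on Theorem~\ref{thm:anbn}, which is itself only quoted from the same source, so as a from-scratch proof it is no more elementary than the cited one. The only bookkeeping worth adding is the boundary convention ($\Delta_0\equiv1$, $\b_0=0$, so the second equation is read for $n\geq1$), which your argument otherwise handles correctly.
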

\begin{proof}{See Chen and Ismail \cite{refCI97}, Ismail \cite[\S2.8, p.\ 41]{refIsmail}; see also \cite{refFVAZ} for further details.}\end{proof}

\section{Generalised Airy polynomials} \label{sec:genairy}
In this section we are concerned with the generalised Airy weight \eqref{genAiry}
and the polynomials orthogonal with respect to this weight.

\begin{lemma}\label{lem:Freud6weight}
For the generalised Airy weight \eqref{genAiry},
the first moment is given by
\begin{align}
\mu_0(t;\la)& =\intS x^\la\exp\left(-\tfrac13x^3+tx\right)\,\d x \nonumber\\
& = 3^{(\la-2)/3}\,\Gamma(\tfrac13\la+\tfrac13) \;\HyperpFq12(\tfrac13\la+\tfrac13;\tfrac13,\tfrac23;\tfrac19t^3) 
+ 3^{(\la-1)/3}\,t\,\Gamma(\tfrac13\la+\tfrac23) \;\HyperpFq12(\tfrac13\la+\tfrac23;\tfrac23,\tfrac43;\tfrac19t^3)\nonumber\\ &\qquad\qquad
+ \tfrac12\,3^{\la/3}\,t^2\,\Gamma(\tfrac13\la+1) \;\HyperpFq12(\tfrac13\la+1;\tfrac43,\tfrac53;\tfrac19t^3) 
\label{eq:mu0}\end{align}
where 
$\HyperpFq12(a_1;b_1,b_2;z)$ is the generalised hypergeometric function. Further, $\ph(t)=\mu_0(t;\la)$ satisfies the third order equation
\[\deriv[3]{\ph}{t}-t\deriv{\ph}{t}-(\la+1)\ph=0.\]
\end{lemma}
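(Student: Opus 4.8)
The plan is to establish the two assertions in Lemma \ref{lem:Freud6weight} separately: first the closed-form hypergeometric expression for $\mu_0(t;\la)$, and second the third-order linear ODE it satisfies. For the ODE, the quickest route is to differentiate under the integral sign. Since $\mu_0(t;\la)=\intS x^\la\exp(-\tfrac13x^3+tx)\,\d x$ and the weight $\w(x;t,\la)$ has sufficient decay at $+\infty$ to justify differentiation, we have $\deriv[k]{\mu_0}{t}=\intS x^{k+\la}\exp(-\tfrac13x^3+tx)\,\d x$, exactly as in \eqref{mu0}. Thus $\deriv[3]{\ph}{t}-t\deriv{\ph}{t}-(\la+1)\ph=\intS\bigl(x^{3}-tx-(\la+1)\bigr)x^\la\exp(-\tfrac13x^3+tx)\,\d x$. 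The key observation is that the integrand is a perfect derivative: using the Pearson equation for the generalised Airy weight, namely $\deriv{}{x}\bigl[x\,\w(x;t,\la)\bigr]=\bigl(-x^3+tx+\la+1\bigr)\w(x;t,\la)$, we see that $\bigl(x^3-tx-(\la+1)\bigr)x^\la\exp(-\tfrac13x^3+tx)=-\deriv{}{x}\bigl[x^{\la+1}\exp(-\tfrac13x^3+tx)\bigr]$. Integrating from $0$ to $\infty$, the boundary term vanishes at $+\infty$ by the super-exponential decay and at $0$ because $\la+1>0$, so the integral is zero and the ODE follows.

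For the closed-form evaluation of $\mu_0(t;\la)$, I would expand $\exp(tx)=\sum_{k\ge0}t^k x^k/k!$ inside the integral (justified by absolute convergence, since the $x^3$ term dominates), interchange sum and integral, and use $\intS x^{k+\la}\exp(-\tfrac13x^3)\,\d x=\tfrac13\,3^{(k+\la+1)/3}\,\Gamma\bigl(\tfrac{k+\la+1}{3}\bigr)$ via the substitution $u=\tfrac13x^3$. This yields $\mu_0(t;\la)=\sum_{k\ge0}\tfrac{t^k}{k!}\,\tfrac13\,3^{(k+\la+1)/3}\,\Gamma\bigl(\tfrac{k+\la+1}{3}\bigr)$. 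Splitting the sum according to the residue of $k$ modulo $3$ (i.e.\ $k=3m$, $k=3m+1$, $k=3m+2$) and using the triplication-type identities for the Pochhammer symbols that turn $\Gamma\bigl(\tfrac{3m+\la+1}{3}\bigr)/(3m)!$ and its shifts into ratios producing the $\HyperpFq12$ parameters $(\tfrac13,\tfrac23)$, $(\tfrac23,\tfrac43)$, $(\tfrac43,\tfrac53)$ respectively, one recovers the three-term expression \eqref{eq:mu0}. Concretely, for the $k=3m$ piece one has $(3m)!=27^m\,m!\,(\tfrac13)_m(\tfrac23)_m$ and $\Gamma\bigl(m+\tfrac{\la+1}{3}\bigr)=\Gamma\bigl(\tfrac{\la+1}{3}\bigr)\bigl(\tfrac{\la+1}{3}\bigr)_m$, so the $m$-sum becomes $\Gamma\bigl(\tfrac{\la+1}{3}\bigr)\sum_m\tfrac{(\tfrac{\la+1}{3})_m}{(\tfrac13)_m(\tfrac23)_m}\tfrac{(t^3/9)^m}{m!}=\Gamma\bigl(\tfrac{\la+1}{3}\bigr)\,\HyperpFq12\bigl(\tfrac{\la+1}{3};\tfrac13,\tfrac23;\tfrac19t^3\bigr)$, and analogously for the other two residue classes, producing the $t$ and $t^2$ prefactors.

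The main obstacle is purely bookkeeping in the second part: correctly matching the Gamma-function and factorial shifts to the standard $\HyperpFq12$ normalisation for each of the three residue classes, keeping track of the powers of $3$ (the exponents $(\la-2)/3$, $(\la-1)/3$, $\la/3$ arise from $\tfrac13\cdot 3^{(\la+1)/3}$, $\tfrac13\cdot 3^{(\la+2)/3}\cdot 3^{-1}$ absorbed into $t$, etc.) and the $\tfrac12$ in the last term coming from $1/2!$. Alternatively, one can bypass the direct expansion: the ODE $\ph'''-t\ph'-(\la+1)\ph=0$ is a linear third-order equation whose solution space is three-dimensional, and each of the three $\HyperpFq12$ terms in \eqref{eq:mu0} is readily checked to be an independent solution (their series satisfy the relevant three-term recurrences for the coefficients); then one fixes the linear combination by evaluating $\ph$ and its first two $t$-derivatives at $t=0$, which reduces to three explicit Gamma-function values $\intS x^{j+\la}\exp(-\tfrac13x^3)\,\d x$ for $j=0,1,2$. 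Either way the verification is routine once the hypergeometric identities are set up, so I would present the Pearson-equation argument for the ODE and the series-expansion argument for the closed form, remarking that the ODE also gives an independent check.
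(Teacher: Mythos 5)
Your proposal is correct, and it is in fact more than the paper provides: the paper's ``proof'' of this lemma is only a citation to Lemma 3.1 of \cite{refCJ20}, so there is no in-text argument to compare against, and your write-up supplies the missing self-contained derivation. Both halves check out. For the ODE, the identity $\bigl(x^3-tx-(\la+1)\bigr)x^{\la}\exp\left(-\tfrac13x^3+tx\right)=-\deriv{}{x}\bigl[x^{\la+1}\exp\left(-\tfrac13x^3+tx\right)\bigr]$ is exactly the Pearson relation for the weight, and the boundary terms vanish at $\infty$ by super-exponential decay and at $0$ because $\la>-1$; differentiation under the integral sign is easily justified by dominated convergence. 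For the closed form, termwise integration gives $\intS x^{k+\la}\exp\left(-\tfrac13x^3\right)\d x=3^{(k+\la-2)/3}\,\Gamma\bigl(\tfrac{k+\la+1}{3}\bigr)$, and your splitting by $k\bmod 3$ together with $(3m)!=27^m\,m!\,(\tfrac13)_m(\tfrac23)_m$, $(3m+1)!=27^m\,m!\,(\tfrac23)_m(\tfrac43)_m$ and $(3m+2)!=2\cdot27^m\,m!\,(\tfrac43)_m(\tfrac53)_m$ reproduces precisely the three $\HyperpFq12$ terms of \eqref{eq:mu0}, including the powers $3^{(\la-2)/3}$, $3^{(\la-1)/3}$, $3^{\la/3}$ and the factor $\tfrac12$ from $2!$. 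Your alternative route (noting that $t=0$ is an ordinary point of the ODE, so the three series in powers of $t^3$ with leading behaviour $1$, $t$, $t^2$ form a basis, and fixing the combination by the values of $\ph$, $\ph'$, $\ph''$ at $t=0$) is also sound and would make a tidy consistency check.
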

\begin{proof}See \cite[Lemma 3.1]{refCJ20}.
\end{proof}

\begin{remark}{\label{rmks32}\rm 
\begin{enumerate}\item[]
\item If $\la=-\tfrac12$ then the first moment is given by
\[ \mu_0(t;-\tfrac12)=\intS x^{-1/2}\exp\left(-\tfrac13x^3+tx\right)\,\d x = \pi^{3/2}2^{-1/3}[\Ai^2(\tau)+\Bi^2(\tau)],\qquad \tau = 2^{-2/3}t\]
where $\Ai(\tau)$ and $\Bi(\tau)$ are the Airy functions. This result is equation 9.11.4 in the DLMF \cite{refNIST}, which is due to Muldoon \cite[p32]{refMul77}. 
\item For the generalised Airy weight \eqref{genAiry} the $k$-th moment is given by \[\mu_k(t;\la) =\intS x^{\la+k}\exp\left(-\tfrac13x^3+tx\right)\,\d x=\mu_0(t,\la+k)\] which, using \eqref{mu0}, implies that \[\deriv[k]{\mu_0}{t}=\mu_0(t,\la+k)\]
 
\end{enumerate}
}\end{remark} 
\comment{\item The generalised Airy weight \eqref{genAiry} is an example of a semi-classical weight for which the first moment $\mu_0(t;\la)$ satisfies a \textit{third order equation}. In our earlier studies of semi-classical weights \cite{refCJ14,refCJ18,refCJK}, the first moment has satisfied a second order equation. For example, for the semi-classical Laguerre weight
\beq \w(x;t)=x^\la\exp(-x^2+tx),\qquad x\in\R^+\label{eq:scLag1}\eeq the first moment is expressed in terms of parabolic cylinder functions $D_{\nu}(z)$, see \cite{refCJ14}.
This is a classical special function that satisfies a second order equation.
\item Equation \eqref{eq;phi} arises in association with threefold symmetric Hahn-classical multiple orthogonal polynomials \cite{refLVA} and in connection with Yablonskii--Vorob'ev polynomials associated with rational solutions of the second \p\ equation \cite{refCM03}.
\end{enumerate}\end{remarks}}

From Theorem \ref{thm:anbn}, we have the following representations of $\a_n(t;\la)$ and $\b_n(t;\la)$.
\begin{theorem}{The coefficients $\a_n(t;\la)$ and $\b_n(t;\la)$ in the recurrence relation \eqref{eq:scrr}
associated with monic polynomials orthogonal with respect to the generalised Airy weight \eqref{genAiry} are given by
\[ \a_n(t;\la) =\deriv{}{t}\ln \frac{\Delta_{n+1}(t;\la)}{\Delta_{n}(t;\la)},\qquad 
\b_n(t;\la) =
\deriv[2]{}{t}\ln\Delta_n(t;\la) \]
where $\Delta_n(t;\la)$ is the Hankel determinant given by 
\[ \Delta_n(t;\la)=\W\left(\mu_0,\deriv{\mu_0}t,\ldots,\deriv[n-1]{\mu_0}t\right)\]
with $\mu_0(t;\la)$ given by \eqref{eq:mu0}.
}\end{theorem}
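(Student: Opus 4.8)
The plan is to recognise that the statement is a direct specialisation of the general machinery assembled in Section~2 together with Lemma~\ref{lem:Freud6weight}, so the ``proof'' amounts to checking that the generalised Airy weight \eqref{genAiry} satisfies the hypotheses of those results. First I would write $\w(x;t,\la)=x^{\la}\w_0(x)\exp(xt)$ with $\w_0(x)=\exp(-\tfrac13x^3)$, which is exactly the form \eqref{scweight}, and note that $\la>-1$. Next I would verify that every moment $\mu_k(t;\la)=\intS x^{k+\la}\exp(-\tfrac13x^3+tx)\,\dx$ is finite for all $t\in\R$: integrability at $x=0$ holds because $k+\la>-1$, and integrability at $x=+\infty$ holds because the superexponential factor $\exp(-\tfrac13x^3)$ dominates $x^{k+\la}\exp(tx)$. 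The same superexponential decay legitimises differentiation under the integral sign to all orders, locally uniformly in $t$, so \eqref{mu0} applies and $\mu_k(t;\la)=\deriv[k]{\mu_0}{t}$.

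With these checks in place the result follows by assembling cited statements. Theorem~\ref{thm:2.1}, applied to the weight \eqref{genAiry}, yields at once $\Delta_n(t;\la)=\W\!\left(\mu_0,\deriv{\mu_0}{t},\ldots,\deriv[n-1]{\mu_0}{t}\right)$, since the $k$-th column of the Hankel determinant \eqref{eq:dets} is the $k$-th $t$-derivative of the zeroth one. Theorem~\ref{thm:anbn}, again specialised to this weight, gives $\a_n(t;\la)=\deriv{}{t}\ln\dfrac{\Delta_{n+1}(t;\la)}{\Delta_n(t;\la)}$ and $\b_n(t;\la)=\deriv[2]{}{t}\ln\Delta_n(t;\la)$. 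Finally, substituting the explicit closed form of $\mu_0(t;\la)$ from Lemma~\ref{lem:Freud6weight}, namely \eqref{eq:mu0}, into the Wronskian completes the identification and establishes the theorem.

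There is no real obstacle here; the one point deserving a sentence of care is the justification of $\mu_k=\partial_t^k\mu_0$ and of the Wronskian structure of $\Delta_n$, both of which rest on the dominated-convergence argument above using the decay of $\exp(-\tfrac13x^3)$. Everything else is a verbatim application of Theorems~\ref{thm:2.1} and \ref{thm:anbn} and of Lemma~\ref{lem:Freud6weight}, so the proof can simply be presented as: the weight \eqref{genAiry} is of the form \eqref{scweight}, hence the conclusions of those results hold with $\mu_0$ given by \eqref{eq:mu0}.
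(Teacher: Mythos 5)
Your proposal is correct and matches the paper's route: the paper states this theorem as an immediate specialisation of Theorems \ref{thm:2.1} and \ref{thm:anbn} to the weight \eqref{genAiry}, with $\mu_0$ given by Lemma \ref{lem:Freud6weight}. Your additional verification that the moments are finite and that differentiation under the integral is justified is a sensible (if routine) supplement to what the paper leaves implicit.
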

\subsection{Differential and discrete equations satisfied by generalised Airy polynomials}
\label{sec:genairyprop}
We derive a differential-difference equation,  a differential equation and a mixed recurrence relation satisfied by generalised Airy 
polynomials. 

The coefficients ${A}_n(x)$ and ${B}_n(x)$ in the relation
\beq \label{ddee}\deriv{P_n}{x}(x;t,\la)=\beta_n(t;\la){A}_n(x)P_{n-1}(x;t,\la)-{B}_n(x)P_n(x;t,\la)\eeq 
satisfied by semi-classical orthogonal polynomials can be derived using a technique introduced by Shohat \cite{refShohat39} for weights $\w(x)$ such that $\displaystyle{\w'(x)}/{\w(x)}$ is a rational function. The method of ladder operators was introduced by Chen and Ismail in \cite{refCI97}, see also \cite[Theorem 3.2.1]{refIsmail} and adapted in \cite{refChenFeigin} for the situation where the weight function vanishes at one point. Explicit expressions for the coefficients in the differential-difference equation \eqref{ddee} when the weight function is positive on the real line except for one point are provided in \cite{refCJK}. The coefficients in the differential-difference relation for generalised Airy polynomials associated with the weight \eqref{genAiry} are given in the next result.

\begin{theorem}\label{thm:dde}{For the generalised Airy weight \eqref{genAiry}
the monic orthogonal polynomials $P_{n}(x;t,\la)$ with respect to this weight satisfy the differential-difference equation \eqref{ddee} with
\begin{subequations}\label{AnBna}\begin{align}
A_n(x)&= x + \a_n + \frac{R_n}{x}\\
B_n(x)&=\b_n + \frac{r_n}{x}
\end{align}\end{subequations}
where $\a_n(t;\la)$ and $\b_n(t;\la)$ are the coefficients in the three-term recurrence relation \eqref{eq:scrr} and 
\begin{subequations}\label{def:Rnrn}\begin{align}R_n&=\a_n^2+\b_n+\b_{n+1}-t\\r_n&=\tfrac12 \left(\la -\a_{n+1} \b_{n+1}+\a_{n-1} \b_n-\a_n^3+t \a_n+1\right)-\a_n \b_{n+1}\label{def:Rnrnb}.\end{align}\end{subequations}}\end{theorem}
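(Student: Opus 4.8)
The plan is to use the method of ladder operators of Chen and Ismail \cite{refCI97} (see also \cite[Theorem 3.2.1]{refIsmail}), in the form adapted by Chen and Feigin \cite{refChenFeigin} and made explicit in \cite{refCJK} for weights that are positive on $\R^+$ but vanish at the origin. Setting $\v(x)=-\ln\w(x;t,\la)=-\la\ln x+\tfrac13x^3-tx$, so that $\vx=x^2-t-\la/x$, the divided difference that governs the ladder coefficients collapses to
\[\frac{\vx-\vy}{x-y}=x+y+\frac{\la}{xy}.\]
Substituting this into the integral representations
\[A_n(x)=\frac{1}{h_n}\intS\frac{\vx-\vy}{x-y}\,P_n^2(y;t,\la)\,\w(y;t,\la)\,\dy,\qquad
B_n(x)=\frac{1}{h_{n-1}}\intS\frac{\vx-\vy}{x-y}\,P_n(y;t,\la)P_{n-1}(y;t,\la)\,\w(y;t,\la)\,\dy\]
and evaluating the resulting moments with orthogonality and the recurrence \eqref{eq:scrr} (using $\tfrac{1}{h_n}\intS y\,P_n^2\,\w=\a_n$, $\tfrac{1}{h_{n-1}}\intS P_nP_{n-1}\,\w=0$, $\tfrac{1}{h_{n-1}}\intS y\,P_nP_{n-1}\,\w=\b_n$) gives $A_n$ and $B_n$ in the form \eqref{AnBna}, with
\[R_n=\frac{\la}{h_n}\intS\frac{P_n^2(y;t,\la)\,\w(y;t,\la)}{y}\,\dy,\qquad
r_n=\frac{\la}{h_{n-1}}\intS\frac{P_n(y;t,\la)P_{n-1}(y;t,\la)\,\w(y;t,\la)}{y}\,\dy.\]

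To obtain the closed forms \eqref{def:Rnrn} I would then feed $A_n(x)=x+\a_n+R_n/x$ and $B_n(x)=\b_n+r_n/x$ into the two compatibility conditions for the ladder relation, namely $B_{n+1}(x)+B_n(x)=(x-\a_n)A_n(x)-\vx$ and $1+(x-\a_n)\bigl(B_{n+1}(x)-B_n(x)\bigr)=\b_{n+1}A_{n+1}(x)-\b_nA_{n-1}(x)$, and compare coefficients of powers of $x$. The $O(1)$ part of the first condition gives at once $R_n=\a_n^2+\b_n+\b_{n+1}-t$, which is the first of the relations \eqref{def:Rnrn}; its $O(x^{-1})$ part gives $r_n+r_{n+1}=\la-\a_nR_n$; and the $O(1)$ part of the second condition gives $r_{n+1}-r_n=\b_{n+1}(\a_{n+1}+\a_n)-\b_n(\a_{n-1}+\a_n)-1$. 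Subtracting these last two identities and substituting the expression just found for $R_n$, the cross terms $\a_n\b_n$ cancel and one is left with exactly \eqref{def:Rnrnb}. The remaining coefficient identities — for instance those from the combined condition $B_n^2(x)+\vx B_n(x)+\sum_{j=0}^{n-1}A_j(x)=\b_nA_n(x)A_{n-1}(x)$, which give $r_n+n=\b_n(\a_n+\a_{n-1})$ and $r_n(r_n-\la)=\b_nR_nR_{n-1}$ — are then automatically consistent and serve as useful checks.

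The one step that needs real care is the first: establishing that the differential-difference equation \eqref{ddee} holds with $A_n$, $B_n$ of the form \eqref{AnBna}, i.e.\ carrying a simple pole at $x=0$ that mirrors the pole of $\vx$ there. This is precisely where the factor $x^\la$ in \eqref{genAiry} matters: when one integrates by parts to derive the lowering relation, the boundary contribution at $x=\infty$ vanishes by the cubic exponential decay, whereas the behaviour at $x=0$ must be handled via the adaptation of the ladder-operator scheme to weights vanishing at a single point \cite{refChenFeigin,refCJK}. Granting that framework, everything else is the routine manipulation of moment integrals and comparison of coefficients sketched above, and since the resulting formulas \eqref{def:Rnrn} are polynomial in $\la$, $t$ and the recurrence coefficients, they hold throughout the range $\la>-1$.
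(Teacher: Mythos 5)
Your proposal is correct in its algebra but follows a genuinely different route from the paper's proof of this theorem. The paper uses Shohat's method directly: it expands $x\deriv{P_n}{x}$ in the orthogonal basis, evaluates the coefficients $c_{n,k}$ by integration by parts (the extra factor of $x$ makes the boundary term $xP_kP_n\w$ vanish at $x=0$ for \emph{every} $\la>-1$), iterates the recurrence \eqref{eq:scrr} to expand $x^3P_n$, finds that only $c_{n,n},\dots,c_{n,n-3}$ survive, and then collapses $P_{n-2},P_{n-3}$ back onto $P_n,P_{n-1}$; this produces $R_n$ and $r_n$ in the closed form \eqref{def:Rnrn} in one computation, with no appeal to compatibility conditions. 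You instead start from the Chen--Ismail integral representations of $A_n,B_n$ and extract \eqref{def:Rnrn} from the supplementary conditions \eqref{eq1}--\eqref{eq3}. Your bookkeeping there is right: the $O(1)$ and $O(x^{-1})$ parts of \eqref{eq1} give $R_n=\a_n^2+\b_n+\b_{n+1}-t$ and $r_n+r_{n+1}=\la-\a_nR_n$, the $O(1)$ part of \eqref{eq2} gives $r_{n+1}-r_n=\b_{n+1}(\a_{n+1}+\a_n)-\b_n(\a_{n-1}+\a_n)-1$, and eliminating $r_{n+1}$ reproduces \eqref{def:Rnrnb} exactly; the identities $r_n+n=(\a_n+\a_{n-1})\b_n$ and $r_n^2-\la r_n=\b_nR_nR_{n-1}$ that you quote as checks do come from the third condition. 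In effect you are running \S\ref{sec:rc}'s companion machinery (Theorem \ref{thm:anbn1} and its corollary) backwards to prove Theorem \ref{thm:dde}, whereas the paper proves Theorem \ref{thm:dde} independently and only then uses the ladder relations. Your route is lighter on computation; its cost is that the entire burden falls on the first step.

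That first step is where your argument has a genuine soft spot. You justify \eqref{ddee} with the plain integral representations by citing \cite{refChenFeigin,refCJK}, but those works treat a factor $|x|^\rho$ vanishing at an \emph{interior} point of the support, and there the adaptation does not leave the representations untouched: it adds a parity term to $B_n$ (exactly the $(\la+\tfrac12)[1-(-1)^n]$ that appears in $\mathcal{B}_n$ for the sextic Freud weight later in the paper), so quoting that framework verbatim would not give your $A_n,B_n$. For the weight \eqref{genAiry} the singular point $x=0$ is an endpoint, so the relevant model is the Laguerre-type ladder with $\v'(x)=x^2-t-\la/x$ and no extra term --- which is what the paper itself uses in \S3.3 --- but the textbook derivation of those representations requires the boundary term $P_nP_k\w$ at $x=0$ to vanish and the integrals against $\v'(y)\sim-\la/y$ (equivalently your expressions for $R_n$, $r_n$, whose integrands behave like $y^{\la-1}$ near the origin) to converge, and both fail for $-1<\la\le0$. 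To cover the full range you need either an explicit continuation argument in $\la$ (both sides of \eqref{ddee} are analytic in $\la$ for $\la>-1$, so agreement for $\la>0$ suffices --- your closing remark about polynomiality gestures at this but does not quite make the argument), or precisely the paper's device of expanding $x\deriv{P_n}{x}$ rather than $\deriv{P_n}{x}$, which removes the boundary and convergence issues simultaneously. With that step repaired, the rest of your derivation stands.
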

 \begin{proof}Since $P_n(x)=P_n(x;t,\la)$ is a polynomial of degree $n$, we can write
 \begin{align}\label{quasi}
 x\deriv{P_n}{x}(x) &= \sum_{k=0}^{n}c_{n,k} P_k(x).
 \end{align}
 Multiplying \eqref{quasi} by $P_k(x)\,\w(x)$, integrating both sides of the equation with respect to $x$ and applying the orthogonality relation, yields, for $k=0,1,2,...,n,$
 \begin{align}\label{Ccoeff}
 h_kc_{n,k} &= \intS x\deriv{P_n}{x}(x) P_k(x)\,\w(x) \,\d{x}, \qquad h_k\neq0.
 \end{align}
 Integrating the right hand side of \eqref{Ccoeff} by parts, we obtain, for $k=0,1,2,\dots n$, 
 \begin{align}\nonumber
 h_k c_{n,k} &=\Big[ xP_k(x) P_n(x)\,\w(x;t,\la)\Big]_{0}^{\infty} 
 - \int_{0}^\infty \deriv{}{x}\left[ x P_k (x)\,\w(x;t,\la) \right]P_n(x) \,\d{x} \nonumber\\
 &=- \intS \left[P_n(x) P_k(x) 
 + xP_n(x) \deriv{P_k}{x}(x)\right]\w(x;t,\la) \,\d{x}
 - \intS xP_n(x) P_k(x) \deriv{\w}{x}(x) \,\d{x}
 \nonumber\\
 &=- \intS \left[P_n(x) P_k(x) 
 + xP_n(x) \deriv{P_k}{x}(x)\right]\w(x;t,\la) \,\d{x}
 - \intS P_n(x) P_k(x) (\la+tx-x^3)\,\w(x;t,\la) \,\d{x}.
\label{maineq2}
 \end{align}
 For $k=n$, it follows from \eqref{maineq2}, that
 \begin{align}
 h_n c_{n,n} &= \intS x\deriv{P_n}{x}(x) P_n(x)\,\w(x;t,\la) \,\d{x}\nonumber\\&=-\tfrac12\intS P_n^2(x) \w(x;t,\la) \,\dx -\tfrac12\intS  P_n^2(x) (\la+tx-x^3)\,\w(x;t,\la)\,\d{x} \nonumber
 \\&= - \tfrac12h_n -\tfrac12\la h_n-\tfrac 12t\intS xP_n^2(x)\,\w(x;t,\la) \,\d{x}+\tfrac 12\intS x^3P_n^2(x)\,\w(x;t,\la)\,\d{x}.\label{eq:cnn1}
 \end{align}
 Iterating the three-term recurrence relation \eqref{eq:scrr}, yields
 \begin{align}
 x^3P_n(x) = P_{n+3}(x) &+ (\a_{n+2}+ \a_{n}+ \a_{n+1})P_{n+2}(x) \nonumber\\& 
 +(\a_n^2+\a_{n+1} \a_n+\a_{n+1}^2+\b_n+\b_{n+1}+\b_{n+2})P_{n+1}(x)
 \nonumber\\& 
 + (2 \a_n \left(\b_n+\b_{n+1}\right)+\a_{n-1} \b_n+\a_{n+1} \b_{n+1}+\a_n^3)P_n(x)
 \nonumber\\& +\b_n \left(\a_{n-1}^2+\a_n \a_{n-1}+\a_n^2+\b_{n-1}+\b_n+\b_{n+1}\right)P_{n-1}(x)
 \nonumber \\&
 +\b_{n-1}\b_n(\a_n+\a_{n-1}+\a_{n-2})P_{n-2}(x)+ \b_n\b_{n-1}\b_{n-2}P_{n-3}(x).\label{recurrence3}
 \end{align}
 Substituting \eqref{eq:scrr} and \eqref{recurrence3} into \eqref{eq:cnn1} it follows that 
 \begin{align} c_{n,n}&= -\tfrac12(\a_nt+\la+1-\a_{n+1}\b_{n+1}-\a_n^3-\a_{n-1}\b_n)+\a_n(\b_{n+1}+\b_{n})
 \label{eq:cnn4}\end{align}
For $k=0,1,2,\dots n-1$, \eqref{maineq2} yields
 \begin{align}\nonumber
 h_k c_{n,k} 
 &= -\intS {P_n(x) P_k(x) \left(\la+tx -x^{3}\right) }\,\w(x;t,\la)\,\d{x}\nonumber
 \\&= 
 \intS \big(x^3-tx\big) P_n(x) P_k(x)\,\w(x;t,\la) \,\d{x}.\label{maineq}
 \end{align}
 Substituting \eqref{recurrence3} and \eqref{eq:3trr} into \eqref{maineq} we see that $c_{n,n-j} =0$ for $j=0,1\dots,n-4$ while
 \begin{subequations}\label{Aau}
 \begin{align} 
 c_{n,n-1} & = \b_{n}(\b_{n+1}+\b_n+\b_{n-1}+\a_n^2+\a_{n-1}\a_n+\a_{n-1}^2-t)\\
 c_{n,n-2} & =\b_n\b_{n-1}(\a_n+\a_{n-1}+\a_{n-2})\\
 c_{n,n-3} &= \b_n\b_{n-1}\b_{n-2}.
 \end{align}
 \end{subequations}
 We now write \eqref{quasi} as
 \begin{align}\label{AQquasi}
 x\deriv{P_n}{x}(x) = c_{n,n-3} P_{n-3}(x) + c_{n,n-2} P_{n-2}(x) +c_{n,n-1}P_{n-1}+ c_{n,n} P_{n}(x).
 \end{align} Iterating \eqref{eq:3trr} to express $P_{n-3}$ and $P_{n-2}$ in terms of $P_n$ and $P_{n-1}$, we obtain
 \begin{subequations}\label{recoa}\begin{align}
 P_{n-2}(x)&= \dfrac{x-\a_{n-1}}{\b_{n-1}}\,P_{n-1}(x)-\dfrac{P_n(x)}{\b_{n-1}}\\
 P_{n-3}(x)&
 = \left\{\dfrac{(x-\a_{n-1})(x-\a_{n-2})}{\b_{n-1}\b_{n-2}}-\dfrac{1}{\b_{n-2}}\right\} P_{n-1}(x) - \dfrac{x-\a_{n-2}}{\b_{n-1}\b_{n-2}}\,P_{n}(x). 
 \end{align}\end{subequations}
 Substituting \eqref{eq:cnn4}, \eqref{Aau} and \eqref{recoa} into \eqref{AQquasi} yields
 \begin{align*}
 x\deriv{P_n}{x}(x)=&\b_n\left\{x^2+\a_nx+\a_n^2+\b_n+\b_{n+1}-t\right\}P_{n-1}(x)\\&
 -\left\{\b_n x-\a_n \b_{n+1}+\tfrac{1}{2} \left(\la -\a_{n+1} \b_{n+1}+\a_{n-1} \b_n-\a_n^3+t \a_n+1\right)\right\}P_n(x)
 \end{align*} and hence
 \begin{align*}
 \deriv{P_n}{x}(x)= \b_n A_n(x) P_{n-1}(x)- B_n(x) P_n(x)
 \end{align*}
 where $ A_n(x)$ and $ B_n(x)$ are given by \eqref{AnBna}.
 \end{proof}
A differential equation satisfied by generalised Airy polynomials can be obtained by differentiating the differential-difference equation \eqref{ddee}.
\begin{theorem}For the generalised Airy weight \eqref{genAiry} the monic orthogonal polynomials $P_n(x;t,\la)$ with respect to this weight satisfy the differential equation
\[
\deriv[2]{P_n}{x}(x;t,\la)+\mathcal{Q}_n(x)\deriv{P_n}{x}(x;t,\la)+\mathcal{T}_n(x)P_n(x;t,\la)=0
\]
where
\begin{align*}
\mathcal{Q}_n(x)=&\frac{\lambda +t x-x^3+1}{x}-\frac{\alpha _n+2 x}{\mathcal{C}_n(x)}\\
\mathcal{T}_n(x)=&\frac{n-\left(\alpha _{n-1}+\alpha _n\right) \beta _n-\left(\beta _n \mathcal{D}_n(x)-n\right) \left(-\lambda +\beta _n \mathcal{D}_n(x)-n-t x+x^3\right)+\beta _n \mathcal{C}_{n-1}(x) \mathcal{C}_{n}(x)}{x^2}\\&+\frac{\left(n-\beta _n \mathcal{D}_n(x)\right) \left(x^2-\alpha _n^2-\beta _n-\beta _{n+1}+t\right)}{x^2\mathcal{C}_{n} (x)}\end{align*} with\begin{align*} \mathcal{C}_n(x)=&x^2+\beta _n+\beta _{n+1}+\alpha _n \left(\alpha _n+x\right)-t\\\mathcal{D}_n(x)=&\alpha _{n-1}+\alpha _n+x\end{align*}
and $\a_n(t;\la)$ and $\b_n(t;\la)$ are the coefficients in the three-term recurrence relation \eqref{eq:scrr}.
\end{theorem}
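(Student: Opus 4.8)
The plan is to differentiate the differential-difference equation \eqref{ddee} and then use the three-term recurrence \eqref{eq:scrr}, together with \eqref{ddee} at the indices $n$ and $n-1$, to eliminate every polynomial except $P_n$ and its derivatives. Writing \eqref{ddee} as $P_n'=\b_nA_nP_{n-1}-B_nP_n$ and differentiating once gives
\[
\deriv[2]{P_n}{x}=\b_nA_n'P_{n-1}+\b_nA_n\deriv{P_{n-1}}{x}-B_n'P_n-B_n\deriv{P_n}{x}.
\]
First I would remove $P_{n-1}'$: applying \eqref{ddee} with $n\mapsto n-1$ gives $P_{n-1}'=\b_{n-1}A_{n-1}P_{n-2}-B_{n-1}P_{n-1}$, and \eqref{eq:scrr} in the form $\b_{n-1}P_{n-2}=(x-\a_{n-1})P_{n-1}-P_n$ turns this into $P_{n-1}'=[(x-\a_{n-1})A_{n-1}-B_{n-1}]P_{n-1}-A_{n-1}P_n$. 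Next I would remove $P_{n-1}$ itself using \eqref{ddee} once more, namely $P_{n-1}=(P_n'+B_nP_n)/(\b_nA_n)$ as a rational identity. Substituting back and collecting the coefficients of $P_n'$ and $P_n$ already produces a second-order equation of the asserted shape, but with coefficients that at this stage are rational in $x$ and built out of $A_n,A_{n-1},B_n,B_{n-1}$, their derivatives, $\b_n$, and $\w'(x)/\w(x)=\la/x-x^2+t$.

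The bulk of the work is to reduce those coefficients to the stated form. The ingredients I would use are: from \eqref{AnBna}--\eqref{def:Rnrn}, the identity $xA_n(x)=\mathcal{C}_n(x)$, hence $A_n'(x)/A_n(x)=\mathcal{C}_n'(x)/\mathcal{C}_n(x)-1/x$; the ladder-operator compatibility relation $B_n(x)+B_{n-1}(x)=(x-\a_{n-1})A_{n-1}(x)+\w'(x)/\w(x)$, which for this weight is checked by substituting \eqref{AnBna}--\eqref{def:Rnrn} (its $x^0$ part is the definition of $R_{n-1}$, and its $x^{-1}$ part, $r_n+r_{n-1}=\la-\a_{n-1}R_{n-1}$, follows from the discrete string equation at two consecutive levels); and that string equation itself, $xB_n(x)=\b_n\mathcal{D}_n(x)-n$ (equivalently $r_n=\b_n(\a_n+\a_{n-1})-n$), which comes from matching the coefficient of $x^n$ in \eqref{quasi}, i.e.\ $c_{n,n}=n$. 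The compatibility relation collapses the coefficient of $P_n'$ to $\w'/\w-A_n'/A_n$, which is precisely $\mathcal{Q}_n(x)$. Feeding the same relation and the string equation into the coefficient of $P_n$, and then clearing the common denominator $x^2\mathcal{C}_n(x)$, yields $\mathcal{T}_n(x)$; the explicit $n$'s in $\mathcal{T}_n$ enter exactly through $xB_n=\b_n\mathcal{D}_n-n$. Should a $t$-derivative appear along the way it can be eliminated using the Toda system of Theorem \ref{thm:todasys}.

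I expect the simplification to be the only real obstacle: it is a finite but lengthy rational-function manipulation, and the care lies in applying the compatibility relation and the string equation in the right places so that the $B_{n-1}$-dependence cancels and the coefficients reorganise into the announced form. No input beyond the results already in the paper (plus the elementary remark $c_{n,n}=n$) is required; everything else is routine differentiation and substitution.
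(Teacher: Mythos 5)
Your proposal is correct and takes essentially the same route as the paper: the paper's proof simply substitutes $v(x)=\tfrac13x^3-tx-\la\ln x$ and the coefficients \eqref{AnBna} into Ismail's ladder-operator formulas $\mathcal{Q}_n=-v'-A_n'/A_n$ and $\mathcal{T}_n=B_n'-B_nA_n'/A_n-B_n(v'+B_n)+\b_nA_{n-1}A_n$, using $r_n=(\a_n+\a_{n-1})\b_n-n$, which is exactly what you assemble. The only (harmless) differences are that you re-derive that second-order equation from \eqref{ddee}, \eqref{eq:scrr} and \eqref{eq1} instead of citing it, and you justify the string relation via the leading-coefficient identity $c_{n,n}=n$ rather than via the paper's Theorem \ref{thm:anbn1}.
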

\begin{proof}For the weight \eqref{genAiry}, we have that \[ \v(x) =-\ln\w(x)=\tfrac13x^3-tx-\la\ln x.\]
The result follows by substituting the expressions for $\v(x)$ and $A_n(x)$ and $B_n(x)$, given in \eqref{AnBna},  
into the equations, see equations (3.2.13) and (3.2.14) in \cite{refIsmail},\begin{align*}
\mathcal{Q}_n(x)&=-\v'(x)-\frac{A_n'(x)}{A_n(x)}\\
 \mathcal{T}_n(x)&=B_n'(x)-B_n(x)\frac{A_n'(x)}{A_n(x)}-B_n(x)[\v'(x)+B_n(x)]+{\b_{n}}{A_{n-1}(x)A_n(x)}.
 \end{align*} Note that here equation (3.2.14) in \cite{refIsmail} has been written for monic polynomials. 
The expression $r_n=(\a_n+\a_{n-1})\b_n-n$ is also used, see \eqref{eq4b} below. \end{proof}
Next, we consider a mixed recurrence relation connecting generalised Airy polynomials associated with different weight functions. Mixed recurrence relations such as these are typically used to prove interlacing and Stieltjes interlacing of the zeros of two polynomials from different sequences and also provide a set of points that can be applied as inner bounds for the extreme zeros of polynomials. 
\begin{lemma}\label{mreca} Let $\{P_n(x;t,\la)\}_{n=0}^{\infty}$ be the sequence of monic generalised Airy polynomials orthogonal with respect to the weight \eqref{genAiry}, then, for $n$ fixed, 
\begin{align}
 \label{l+2again}x^2P_{n-2}(x;t,\la+2)&=\left[\frac{e_{n}}{\b_{n-1}}(x-\a_{n-1})-d_{n}\right]P_{n-1}(x;t,\la)+\left(1-\frac{e_{n}}{\b_{n-1}}\right)P_n(x;t,\la)
\end{align} where 
\begin{align*}d_{n}=&\dfrac{P_{n}(0;t,\la)}{P_{n-1}(0;t,\la)}+\dfrac{P_{n-1}(0;t,\la+1)}{P_{n-2}(0;t,\la+1)},\qquad
e_{n}=\dfrac{P_{n-1}(0;t,\la+1)}{P_{n-2}(0;t,\la+1)}\dfrac{P_{n-1}(0;t,\la)}{P_{n-2}(0;t,\la)}\end{align*} and $\a_n(t;\la)$ and $\b_n(t;\la)$ are the coefficients in the three-term recurrence relation \eqref{eq:scrr}.
\end{lemma}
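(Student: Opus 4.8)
The plan is to reduce the statement to a single Christoffel-type identity between the generalised Airy polynomials for consecutive values of the parameter $\la$, and then iterate. Since $\w(x;t,\la+1)=x\,\w(x;t,\la)$ for the weight \eqref{genAiry}, the first step is to prove that, for every $n$,
\[
xP_n(x;t,\la+1)=P_{n+1}(x;t,\la)-\frac{P_{n+1}(0;t,\la)}{P_n(0;t,\la)}\,P_n(x;t,\la).
\]
Here every $P_k(0;t,\la)$ is nonzero because the zeros of orthogonal polynomials associated with a weight supported on $\R^+$ all lie in $(0,\infty)$. The right-hand side is monic of degree $n+1$ and vanishes at $x=0$, hence equals $x\,Q_n(x)$ with $Q_n$ monic of degree $n$; and for $0\le j\le n-1$ one has $\intS Q_n(x)\,x^{j}\,\w(x;t,\la+1)\,\dx=\intS \big[P_{n+1}(x;t,\la)-\tfrac{P_{n+1}(0;t,\la)}{P_n(0;t,\la)}P_n(x;t,\la)\big]x^{j}\,\w(x;t,\la)\,\dx=0$, so $Q_n$ is the monic polynomial of degree $n$ orthogonal with respect to $\w(x;t,\la+1)$, i.e.\ $Q_n=P_n(\cdot;t,\la+1)$, which is the claimed identity.

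Next I would apply this identity with $\la\mapsto\la+1$ and $n\mapsto n-2$, then multiply by $x$, to express $x^2P_{n-2}(x;t,\la+2)$ through $xP_{n-1}(x;t,\la+1)$ and $xP_{n-2}(x;t,\la+1)$; applying the identity again (now with $n\mapsto n-1$ and with $n\mapsto n-2$) rewrites both of these in terms of $P_n(x;t,\la)$, $P_{n-1}(x;t,\la)$ and $P_{n-2}(x;t,\la)$. Collecting the coefficients, the coefficient of $P_n(x;t,\la)$ is $1$, the coefficient of $P_{n-1}(x;t,\la)$ is precisely $-d_n$, and the coefficient of $P_{n-2}(x;t,\la)$ is precisely $e_n$, with $d_n$ and $e_n$ as defined in the statement; this yields the intermediate relation $x^2P_{n-2}(x;t,\la+2)=P_n(x;t,\la)-d_nP_{n-1}(x;t,\la)+e_nP_{n-2}(x;t,\la)$.

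Finally I would eliminate $P_{n-2}(x;t,\la)$ using the three-term recurrence \eqref{eq:scrr} with index $n-1$, which gives $P_{n-2}(x;t,\la)=\b_{n-1}^{-1}\big[(x-\a_{n-1})P_{n-1}(x;t,\la)-P_n(x;t,\la)\big]$, and then collect the $P_n$ and $P_{n-1}$ terms to obtain \eqref{l+2again}. There is no substantial obstacle: the only step that is not pure bookkeeping is the Christoffel identity above (a standard kernel-polynomial argument, which does use the non-vanishing of the $P_k(0;\cdot)$), and after that the work is entirely algebraic. The one place to be careful is the index shifting when iterating the identity, where an off-by-one slip is easy to make; I would verify the shifts against small cases.
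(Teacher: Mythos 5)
Your proposal is correct and follows essentially the same route as the paper: two successive Christoffel-type reductions from $\la+2$ to $\la$ yielding $x^2P_{n-2}(x;t,\la+2)=P_n(x;t,\la)-d_nP_{n-1}(x;t,\la)+e_nP_{n-2}(x;t,\la)$, followed by elimination of $P_{n-2}(x;t,\la)$ via the three-term recurrence. The only difference is cosmetic: you prove the single-step identity $xP_n(x;t,\la+1)=P_{n+1}(x;t,\la)-\tfrac{P_{n+1}(0;t,\la)}{P_n(0;t,\la)}P_n(x;t,\la)$ directly from orthogonality, whereas the paper simply invokes Christoffel's formula.
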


\begin{proof}The weight function associated with the monic polynomials $P_n(x;t,\la+2)$ is 
\begin{align*}\w(x;t,\la+2)&=x^{\la+2}\exp\left(-\tfrac13x^3+tx\right) 
=x\,\w(x;t,\la+1).\end{align*}
Applying Christoffel's formula (cf.~\cite[Theorem 2.5]{refSzego}, \cite[Theorem 2.7.1]{refIsmail}) to the monic polynomial\\$P_{n-2}(x;t,\la+2)$, we can write
 \begin{align*}
x P_{n-2}(x;t,\la+2)&=\dfrac{-1}{P_{n-2}(0;t,\la+1)}\left|\begin{matrix}P_{n-2}(x;t,\la+1)&P_{n-1}(x;t,\la+1)\\P_{n-2}(0;t,
\la+1)&P_{n-1}(0;t,\la+1)\end{matrix}\right|.\end{align*}This yields
\begin{align*}
P_{n-2}(x;t,\la+2)=&
\dfrac{1}{x}\left[P_{n-1}(x;t,\la+1)-\dfrac{P_{n-1}(0;t,\la+1)}{P_{n-2}(0;t,\la+1)}P_{n-2}(x;t,\la+1)\right]\\=&
\dfrac{1}{x}\left\{\dfrac{1}{x}\left[P_{n}(x;t,\la)-\dfrac{P_{n}(0;t,\la)}{P_{n-1}(0;t,\la)}P_{n-1}(x;t,\la)\right]\right.\\&\qquad\left.-\dfrac{P_{n-1}(0;t,\la+1)}{xP_{n-2}(0;t,\la+1)}\left[P_{n-1}(x;t,\la)-\dfrac{P_{n-1}(0;t,\la)}{P_{n-2}(0;t,\la)}P_{n-2}(x;t,\la)\right]\right\}
\\=&\dfrac{1}{x^2}\left[P_{n}(x;t,\la)-d_nP_{n-1}(x;t,\la)+e_nP_{n-2}(x;t,\la)\right].
\end{align*}
Using the three-term recurrence relation \[P_{n-2}(x;t,\la)=\frac{x-\a_{n-1}}{\b_{n-1}}\, P_{n-1}(x;t,\la)-\frac{1}{\b_{n-1}}P_n(x;t,\la)\] to eliminate $P_{n-2}(x;t,\la)$ yields the result. 
\end{proof}

\comment{\begin{remark} Chihara (cf.~\cite[p. 37]{refChihara}) showed that $$P_n(x;t,\la+1)=\dfrac{h_n}{P_n(0;t,\la)}K_n(0,x)$$ where 
 $$K_n(y,x)=\sum_{j=0}^n\dfrac{P_j(y;t,\la)P_j(x;t,\la)}{h_j}$$ are the monic Kernel polynomials associated with $P_n(x;t,\la)$ and \[h_j=\imp P_j(x;t,\la)^2\d\w(x;t,\la).\] It follows that 
\[e_{n+2}=\frac{h_{n+1}}{h_n}\frac{K_{n+1}(0,x)}{K_n(0,x)}>0\] and hence
\beq\label{coef}e_{n+2}-\b_{n+1}=\frac{h_{n+1}}{h_n}\left(\frac{K_{n+1}(0,x)}{K_n(0,x)}-1\right)>0.
\eeq 
\end{remark}}

\subsection{Zeros of generalised Airy polynomials}\label{sec:GenAiryzeros}
 The property that the zeros $x_{k,n}$, $k\in\{1,2,\dots,n\}$ of $P_n(x)$ where $\{P_n(x)\}_{n=0}^{\infty}$ is a sequence of polynomials orthogonal with respect to a semiclassical weight $\w(x)>0$ are real, distinct and \[x_{1,n} <x_{1,n-1} <x_{2,n} <\dots<x_{n-1,n} <x_{n-1,n-1}<x_{n,n} \label{sep}\] holds for any semiclassical weight. The method of proof (see, for example, \cite[Theorems 3.3.1 and 3.3.2]{refSzego}) uses the three-term recurrence relation and definition of orthogonality. Note that, for an even weight $\w(x)$, $x_{\frac{n+1}{2},n} =0$ when $n$ is odd.


 Monotonicity of the zeros of semiclassical orthogonal polynomials plays and important role in applications. 
\begin{lemma}\label{mono}
Consider the semiclassical weight \beq \label{scweight1}\w(x;t,\la)=|C(x)|^{\la}\w_0(x)\exp\{tD(x)\},\qquad \qquad\la>-1\eeq 
where $\w_0(x)$ is a positive function on $(a,b)$. Let $\{P_n(x;t,\la)\}_{n=0}^{\infty}$ be the sequence of semiclassical orthogonal polynomials  associated with the weight \eqref{scweight1}. Denote the $n$ real zeros of $P_n(x;t,\la)$ in increasing order by $x_{n,\nu}(t;\la)$, $\nu=1,2,\dots,n$. Then, for a fixed value of $\nu$, $\nu\in \{1,2,\dots, n\}$, the $\nu$-th zero $x_{n,\nu}(t;\la)$
\begin{itemize}
\item[(i)] increases when $\la$ increases, if $\displaystyle{\frac{1}{C(x)}\deriv{}{x}C(x)>0}$ for $x\in (a,b)$;\\[-0.4cm]
\item[(ii)] increases when $t$ increases, if $\displaystyle{\deriv{}{x}D(x)>0}$ for $x\in (a,b)$.
\end{itemize}
\end{lemma}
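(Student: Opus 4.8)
The statement is the classical Markov monotonicity theorem for zeros (see, e.g., \cite[\S6.12]{refSzego} and \cite[Ch.~7]{refIsmail}) specialised to the weight \eqref{scweight1}, so the plan is to reprove it in this setting. Write $\tau$ for whichever of $\la$ or $t$ is being varied, keep the other parameter fixed, and abbreviate the monic orthogonal polynomials, their norms and their zeros by $P_n(x;\tau)$, $h_n(\tau)$ and $x_{n,\nu}(\tau)$. Since the moments of $\w$ and of $\partial_\tau\w$ are finite for all admissible parameter values, the moments — and hence, through \eqref{eq:dets}, the Hankel determinants and the coefficients of $P_n(x;\tau)$ — depend smoothly (indeed real-analytically) on $\tau$; because the zeros of $P_n$ are real and simple, the implicit function theorem makes each $x_{n,\nu}(\tau)$ a differentiable function of $\tau$, and differentiating the identity $P_n\big(x_{n,\nu}(\tau);\tau\big)=0$ gives
\[\deriv{x_{n,\nu}}{\tau}=-\,\frac{\partial_\tau P_n(x_{n,\nu};\tau)}{P_n'(x_{n,\nu};\tau)}.\]

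The next step is to compute $\partial_\tau P_n$. As $P_n(\cdot;\tau)$ is monic of degree $n$, $\partial_\tau P_n$ has degree at most $n-1$, so $\partial_\tau P_n=\sum_{k=0}^{n-1}c_kP_k$ with $c_k=h_k^{-1}\int_a^b(\partial_\tau P_n)P_k\,\w\,\dx$. Differentiating $\int_a^bP_nP_k\,\w\,\dx=0$ (valid for $0\le k\le n-1$) with respect to $\tau$, discarding $\int_a^bP_n(\partial_\tau P_k)\,\w\,\dx=0$ because $\deg\partial_\tau P_k<n$, and interchanging $\partial_\tau$ with the integral, yields $h_kc_k=-\int_a^bP_nP_k\,\partial_\tau\w\,\dx$, whence
\[\partial_\tau P_n(x;\tau)=-\int_a^b P_n(y;\tau)\,K_{n-1}(x,y;\tau)\,\partial_\tau\w(y;\tau)\,\dy,\qquad K_{n-1}(x,y;\tau)=\sum_{k=0}^{n-1}\frac{P_k(x;\tau)P_k(y;\tau)}{h_k(\tau)}.\]
Evaluating at a zero $x=x_{n,\nu}$ and using the Christoffel--Darboux formula, which there collapses to $K_{n-1}(x_{n,\nu},y)=P_{n-1}(x_{n,\nu})P_n(y)\big/\big[h_{n-1}(y-x_{n,\nu})\big]$, gives the Markov formula
\[\deriv{x_{n,\nu}}{\tau}=\frac{P_{n-1}(x_{n,\nu};\tau)}{h_{n-1}(\tau)\,P_n'(x_{n,\nu};\tau)}\int_a^b\frac{P_n(y;\tau)^2}{y-x_{n,\nu}}\,\partial_\tau\w(y;\tau)\,\dy,\]
in which the prefactor is positive, because $P_{n-1}(x_{n,\nu})P_n'(x_{n,\nu})/h_{n-1}=K_{n-1}(x_{n,\nu},x_{n,\nu})=\sum_{k=0}^{n-1}P_k(x_{n,\nu})^2/h_k>0$ and $h_{n-1}>0$, whence $P_{n-1}(x_{n,\nu})\big/\big(h_{n-1}P_n'(x_{n,\nu})\big)>0$.

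It remains to sign the integral. Put $g(y)=\partial_\tau\log\w(y;\tau)$, so $\partial_\tau\w=g\,\w$, and split $g(y)=\big(g(y)-g(x_{n,\nu})\big)+g(x_{n,\nu})$. The constant part contributes nothing: since $x_{n,\nu}$ is a simple zero of $P_n$, $P_n(y)=(y-x_{n,\nu})Q(y)$ with $Q$ of degree $n-1$, so $P_n(y)^2/(y-x_{n,\nu})=P_n(y)Q(y)$ is orthogonal to the constants against $\w$. Hence
\[\deriv{x_{n,\nu}}{\tau}=(\text{positive factor})\times\int_a^b P_n(y)^2\,\frac{g(y)-g(x_{n,\nu})}{y-x_{n,\nu}}\,\w(y)\,\dy,\]
and if $g$ is strictly increasing on $(a,b)$ the difference quotient is positive away from $y=x_{n,\nu}$, making the integral — and therefore $\deriv{x_{n,\nu}}{\tau}$ — positive. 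For the weight \eqref{scweight1} one has $\partial_\la\log\w=\log|C(x)|$, with derivative $C'(x)/C(x)$, and $\partial_t\log\w=D(x)$, with derivative $D'(x)$, so the hypotheses of (i) and (ii) are precisely the conditions that make $g$ increasing, and the two assertions follow.

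I expect the only delicate points to be the analytic prerequisites rather than the algebra: the differentiability and simplicity of the zeros (which rest on positivity of the Hankel determinants) and the legitimacy of differentiating the orthogonality relations under the integral sign (which rests on finiteness of the moments of $\w$ and $\partial_\tau\w$, uniformly on a neighbourhood of the given parameter value, as holds for the weights \eqref{genAiry} and \eqref{freud6g}). Once these are secured, the Christoffel--Darboux computation and the sign analysis are routine.
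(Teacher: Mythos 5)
Your argument is correct and is essentially the paper's: both proofs reduce the lemma to computing $\dfrac{\partial}{\partial\la}\ln\w(x;t,\la)=\ln|C(x)|$ and $\dfrac{\partial}{\partial t}\ln\w(x;t,\la)=D(x)$ and then applying Markov's monotonicity theorem, checking that these functions are increasing precisely under the stated hypotheses on $C'/C$ and $D'$. The only difference is that where the paper simply cites Szeg\H{o} \cite[Theorem 6.12.1]{refSzego}, you reprove Markov's theorem from scratch via the kernel/Christoffel--Darboux computation, which is a sound (and more self-contained) filling-in of the cited result rather than a different route.
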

\begin{proof}
\begin{itemize}\item[]
\item[(i)]For the semi-classical weight \eqref{scweight1}
\begin{align*}\frac{\partial}{\partial \la}\ln \w(x;t,\la)&= \dfrac{|C(x)|^\la \exp\{tD(x)\} w_0(x)\ln |C(x)|}{|C(x)|^\la \exp\{tD(x)\}w_0(x)} 
=\ln|C(x)|\end{align*} and therefore it follows from from Markov's monotonicity theorem (cf.~\cite[Theorem 6.12.1]{refSzego} that the zeros of $P_n(x)$ increase as $\la$ increases when $\ln|C(x)|$ is an increasing function of $x$. Since
\[\ds{\deriv{}{x}\ln|C(x)|=\dfrac{1}{|C(x)|} \text{sgn}\, C(x) \deriv[]{}{x}C(x)}\] the result follows.
\item[(ii)]Similarly, since 
\begin{align*}\frac{\partial}{\partial t}\ln\w(x;t,\la)&=\dfrac{D(x)|C(x)|^\la \exp\{tD(x)\} w_0(x)}{|C(x)|^\la \exp\{tD(x)\}w_0(x)} 
=D(x) \end{align*} 
it follows that the zeros of $P_n(x)$ increase as $t$ increases when $D(x)$ is an increasing function of $x$.\end{itemize}
\end{proof}
\begin{corollary}Let $\{P_n(x)\}_{n=0}^{\infty}$ be the sequence of monic generalised Airy polynomials orthogonal with respect to the weight \eqref{genAiry} and let $0<x_{n,n}<\dots<x_{2,n} <x_{1,n} $ denote the zeros of $P_n(x)$. Then, for $\la>-1$ and $t\in \R$ and for a fixed value of $\nu$, $\nu\in \{1,2,\dots, n\}$, 
the $\nu$-th zero $x_{n,\nu} $ increases when (i) $\la$ increases; and (ii)
$t$ increases.
\end{corollary}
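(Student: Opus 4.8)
The plan is to deduce the corollary directly from Lemma~\ref{mono} by exhibiting the generalised Airy weight \eqref{genAiry} as an instance of the weight \eqref{scweight1}. First I would write
\[\w(x;t,\la)=x^{\la}\exp\left(-\tfrac13x^3+tx\right)=|C(x)|^{\la}\,\w_0(x)\,\exp\{tD(x)\}\]
with the identifications $C(x)=x$, $\w_0(x)=\exp(-\tfrac13x^3)$ and $D(x)=x$, on the interval $(a,b)=(0,\infty)$. Since $\w_0(x)=\exp(-\tfrac13x^3)>0$ for every $x\in(0,\infty)$, the hypothesis of Lemma~\ref{mono} on $\w_0$ is satisfied, and the finiteness of all moments of \eqref{genAiry} (noted after \eqref{scweight}) ensures that the monic orthogonal polynomials $P_n(x;t,\la)$ and their $n$ real simple zeros in $(0,\infty)$ are well defined for all $\la>-1$, $t\in\R$.

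Next I would check the two monotonicity conditions of Lemma~\ref{mono}. For part~(i),
\[\frac{1}{C(x)}\deriv{}{x}C(x)=\frac{1}{x}>0\qquad\text{for all }x\in(0,\infty),\]
so each zero of $P_n(x;t,\la)$ increases when $\la$ increases. For part~(ii),
\[\deriv{}{x}D(x)=\deriv{}{x}x=1>0\qquad\text{for all }x\in(0,\infty),\]
so each zero of $P_n(x;t,\la)$ increases when $t$ increases. This already establishes both assertions of the corollary.

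The only point that needs a word of care is that the labelling of the zeros in the corollary, $0<x_{n,n}<\dots<x_{1,n}$, is the reverse of the increasing ordering $x_{n,1}<\dots<x_{n,n}$ used in Lemma~\ref{mono}; the $\nu$-th zero in the corollary's convention is the $(n+1-\nu)$-th in the lemma's. This relabelling is immaterial, however, because Markov's monotonicity theorem — on which Lemma~\ref{mono} rests — moves \emph{every} zero of $P_n(x;t,\la)$ in the same direction, so each individual zero shifts to the right as $\la$ increases and as $t$ increases regardless of how the zeros are indexed. There is essentially no serious obstacle in this proof beyond verifying that the weight genuinely has the structural form \eqref{scweight1} and keeping track of the reversed ordering convention; everything else is an immediate application of Lemma~\ref{mono}.
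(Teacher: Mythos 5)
Your proposal is correct and follows exactly the paper's own route: it applies Lemma~\ref{mono} with the identifications $C(x)=x$, $D(x)=x$ and $\w_0(x)=\exp(-\tfrac13x^3)$ on $(0,\infty)$, merely spelling out the verification of the two sign conditions and the (harmless) reversed labelling of the zeros.
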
 
\begin{proof} This follows from Lemma \ref{mono}, taking $C(x)=x$, $D(x)=x$ and $\w_0(x)=\exp(-\tfrac13x^3)$.
\end{proof}
\comment{\begin{theorem}
Let $\{p_n\}_{n=0}^\infty$ be a sequence of polynomials orthogonal on the interval $(c,d)$.
Fix $k,n \in\mathbb{N}$ with $k < n-1$ and suppose deg$(g_{n-k-1})=n-k-1$ with
\beq\label{13}f(x)g_{n-k-1}(x)=G_k(x) p_{n-1}(x) + H(x) p_n(x)\eeq
where $f(x) \neq 0$ for $x\in(c,d)$ and deg($G_k)=k$. Then, if $g_{n-k-1}$ and $p_n$ are co-prime, 
\begin{itemize}
\item[(i)] the $n-1$ real, simple zeros of $G_k g_{n-k-1}$ interlace with the zeros of $p_n$.
\item[(ii)] The largest (smallest) zero of $G_k$ is a strict
lower (upper) bound for the largest (smallest) zero of $p_{n}.$
\end{itemize}

\end{theorem}
\begin{proof}
See \cite{refDJ12}.
\end{proof}}
Next we use \eqref{l+2again} to obtain an upper bound for the smallest zero and a lower bound for the largest zero of generalised Airy polynomials. 
\begin{theorem}Let $\{P_n(x;t,\la)\}_{n=0}^{\infty}$ be the sequence of monic generalised Airy polynomials orthogonal with respect to the weight \eqref{genAiry} on $(0,\infty)$. For each $n=2,3,\dots,$ the largest zero, $x_{1,n}$, and the smallest zero $x_{n,n}$ of $P_n(x;t,\la)$, satisfies \[0<x_{n,n}<\a_{n-1}+\frac{d_{n}\b_{n-1}}{e_{n}}<x_{1,n}\] where $\a_n=\a_n(t;\la)$ and $\b_n=\b_n(t;\la)$ are the coefficients in the three-term recurrence relation \eqref{eq:scrr} and \begin{align*}d_{n}=&\dfrac{P_{n}(0;t,\la)}{P_{n-1}(0;t,\la)}+\dfrac{P_{n-1}(0;t,\la+1)}{P_{n-2}(0;t,\la+1)},\qquad
	e_{n}=\dfrac{P_{n-1}(0;t,\la+1)}{P_{n-2}(0;t,\la+1)}\dfrac{P_{n-1}(0;t,\la)}{P_{n-2}(0;t,\la)}.\end{align*}
\end{theorem}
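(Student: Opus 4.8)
The plan is to read the mixed recurrence relation \eqref{l+2again} of Lemma~\ref{mreca} as a separation identity for the zeros of $P_n(\cdot;t,\la)$. Write $G_1(x)=\tfrac{e_{n}}{\b_{n-1}}(x-\a_{n-1})-d_{n}$ for the linear coefficient of $P_{n-1}(x;t,\la)$ in \eqref{l+2again}; this polynomial has the single zero $x^\ast:=\a_{n-1}+d_{n}\b_{n-1}/e_{n}$ appearing in the statement. First I would record the elementary sign facts: $\b_{n-1}>0$; since $P_m(\cdot;t,\la)$ is monic with all zeros in $(0,\infty)$ we have $P_m(0;t,\la)=(-1)^m\prod_\nu x_{\nu,m}\neq0$, so each ratio defining $e_{n}$ is negative and hence $e_{n}>0$ (so $x^\ast$ is well defined and $G_1$ has positive slope); and, by the interlacing of the zeros of consecutive orthogonal polynomials, $P_{n-1}(x_{k,n};t,\la)\neq0$ with $\text{sgn}\,P_{n-1}(x_{k,n};t,\la)=(-1)^{k-1}$ in the ordering $x_{1,n}>x_{2,n}>\dots>x_{n,n}>0$ used in the statement. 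Evaluating \eqref{l+2again} at a zero $x_{k,n}$ of $P_n(\cdot;t,\la)$ kills the $P_n$-term and leaves
\[x_{k,n}^{2}\,P_{n-2}(x_{k,n};t,\la+2)=G_1(x_{k,n})\,P_{n-1}(x_{k,n};t,\la),\qquad k=1,\dots,n,\]
so, since $x_{k,n}^{2}>0$, the sign of $P_{n-2}(x_{k,n};t,\la+2)$ is determined by that of $G_1(x_{k,n})$.

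For the upper bound I would argue by contradiction. If $x^\ast\ge x_{1,n}$ then $G_1(x_{k,n})<0$ for all $k\ge2$ (strictly, as $x_{k,n}<x_{1,n}\le x^\ast$) and $G_1(x_{1,n})\le0$, whence $\text{sgn}\,P_{n-2}(x_{k,n};t,\la+2)=(-1)^{k}$ for $k=2,\dots,n$ and $P_{n-2}(x_{1,n};t,\la+2)\le0$. Thus on the $n-1$ points $x_{n,n}<\dots<x_{2,n}$ the polynomial $P_{n-2}(\cdot;t,\la+2)$ takes strictly alternating nonzero values, forcing $n-2$ of its zeros into $(x_{n,n},x_{2,n})$, and since it is positive at $x_{2,n}$ and $\le0$ at $x_{1,n}$ it has at least one further zero in $(x_{2,n},x_{1,n}]$ — at least $n-1$ zeros of a polynomial of degree $n-2$, a contradiction. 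Hence $x^\ast<x_{1,n}$. The lower bound is the mirror image: if $x^\ast\le x_{n,n}$ then $G_1(x_{k,n})>0$ for $k\le n-1$ and $G_1(x_{n,n})\ge0$, so $P_{n-2}(\cdot;t,\la+2)$ has strictly alternating signs on $x_{n-1,n}<\dots<x_{1,n}$ (giving $n-2$ zeros in $(x_{n-1,n},x_{1,n})$) plus one more zero at or immediately above $x_{n,n}$, again exceeding $\deg P_{n-2}(\cdot;t,\la+2)=n-2$. Together with $x_{n,n}>0$ (the weight being supported on $(0,\infty)$) this yields $0<x_{n,n}<x^\ast<x_{1,n}$; in fact the lower-bound argument already forces $x^\ast>0$.

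The proof is pure sign-counting, so the only real work is bookkeeping: committing to one zero-ordering convention throughout (the statement's decreasing order is the reverse of the separation display \eqref{sep}); verifying that the alternation of signs is genuinely strict, which rests on $P_{n-1}$ not vanishing at zeros of $P_n$ and on $G_1$ vanishing at most once; and absorbing the boundary cases $x^\ast=x_{1,n}$ and $x^\ast=x_{n,n}$, where the extra zero lands exactly at an endpoint but the count still beats $n-2$ (for $n=2$ the argument degenerates, with $P_{0}(\cdot;t,\la+2)\equiv1$ contradicting the identity at $x=x_{1,2}$ directly). An essentially equivalent route is to quote a general lemma on inner bounds for extreme zeros from a three-term mixed relation $f(x)g_{n-2}(x)=G_1(x)P_{n-1}+H(x)P_n$ and check its hypotheses — $f(x)=x^{2}\neq0$ on $(0,\infty)$, $\deg G_1=1$, and that $P_{n-2}(\cdot;t,\la+2)$ and $P_n(\cdot;t,\la)$ can share only $x^\ast$ as a common zero (immediate from the displayed identity) — but the direct count above is self-contained.
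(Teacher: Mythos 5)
Your proof is correct. It starts from the same mixed relation \eqref{l+2again} as the paper, but the mechanism is different: you evaluate the relation at the zeros $x_{k,n}$ of $P_n(x;t,\la)$, so that $x_{k,n}^2P_{n-2}(x_{k,n};t,\la+2)=G(x_{k,n})P_{n-1}(x_{k,n};t,\la)$, and combine the sign alternation of $P_{n-1}$ at consecutive zeros of $P_n$ with the single sign change of the linear factor $G$ to force, under either hypothesis $x^\ast\ge x_{1,n}$ or $x^\ast\le x_{n,n}$, at least $n-1$ zeros of the degree-$(n-2)$ polynomial $P_{n-2}(\cdot\,;t,\la+2)$, a contradiction. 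The paper instead divides by $P_n$ and uses the partial-fraction decomposition $P_{n-1}/P_n=\sum_j C_j/(x-x_{j,n})$ with $C_j>0$, arguing that the right-hand side takes arbitrarily large values of both signs on each interval between consecutive zeros on which $G$ keeps its sign; this yields the stronger Stieltjes-interlacing statement (the zero of $G$ together with the $n-2$ zeros of $P_{n-2}(\cdot\,;t,\la+2)$ interlaces with the $n$ zeros of $P_n$), of which the stated inner bounds are a corollary. Your route is more elementary and self-contained: it needs no partial-fraction lemma, it does not require the co-primality of $P_n(\cdot\,;t,\la)$ and $P_{n-2}(\cdot\,;t,\la+2)$, which the paper invokes ``by assumption'' without proof (in your argument the only possible common zero is $x^\ast$ itself, and the boundary cases $x^\ast=x_{1,n}$, $x^\ast=x_{n,n}$ are absorbed into the zero count), and your explicit checks that $e_n>0$ (so $G$ is genuinely linear with positive slope) and that the case $n=2$ works are details the paper glosses over. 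What the paper's argument buys in exchange is the finer interlacing information rather than just the bounds on the extreme zeros.
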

\begin{proof}Let \[x_{n,n}<x_{n-1,n}<\dots<x_{2,n}<x_{1,n}\] denote the zeros of $P_n(x;t,\la)$.
Consider \eqref{l+2again}\beq\label{l+2again!}x^2P_{n-2}(x;t,\la+2)=G(x)P_{n-1}(x;t,\la)+\left(1-\frac{e_{n}}{\b_{n-1}}\right)P_n(x;t,\la)\eeq with $\ds{G(x)=\frac{e_{n}}{\b_{n-1}}(x-\a_{n-1})-d_{n}}$. Since $P_{n-1}(x;t,\la)$ and $P_{n}(x;t,\la)$ are always
co-prime while $P_{n}(x;t,\la)$ and \newline $P_{n-2}(x;t,\la+2)$ are co-prime by assumption, it follows from \eqref{l+2again!} that $G(x_{j,n})\neq 0$ for every $j\in\{1,2,\dots,n\}.$ From \eqref{l+2again!}, provided $P_n(x;t,\la)\neq 0$, we have \[ \frac{x^2
P_{n-2}(x;t,\la+2)}{P_n(x;t,\la)} = 1-\frac{e_{n}}{\b_{n-1}}
+\frac{G(x) P_{n-1}(x;t,\la)}{ P_{n}(x;t,\la)}.\] The decomposition into partial fractions (cf.~\cite[Theorem 3.3.5]{refSzego} \[\frac{P_{n-1}(x;t,\la)}){P_{n}(x;t,\la)}=\sum_{j=1}^{n}\frac{C_j}{x-x_{j,n}}\] where $C_j>0$ for every $j\in\{1,2,\dots,n\}$, implies that we can write 
\[\frac{x^2
P_{n-2}(x;t,\la+2)}{P_{n}(x;t,\la)} = 1-\frac{e_{n}}{\b_{n-1}}
+\sum_{j=1}^{n}\frac{G(x)C_j}{x-x_{j,n}},~~~ x\neq x_{j,n}.\] 
Suppose that $G(x)$ does not change sign in an interval $(x_{j+1,n},x_{j,n})$ where $j\in\{1,2,\dots,n-1\}$. Since
$C_j>0$ while the right hand side takes arbitrarily large positive and
negative values on $(x_{j+1,n},x_{j,n})$, it follows that $P_{n-2}(x;t,\la+2)$ must have an odd number of zeros in every interval in which $G(x)$ does not change
sign. Since $G(x)$ is of degree $1$, there are at least $n-2$ intervals $(x_{j+1,n},x_{j,n})$, $j\in\{1,2,\dots,n-1\}$ in which $G(x)$ does not
change sign and so each of these intervals must contain exactly one of the $n-2$ real, simple zeros of $P_{n-2}(x;t,\la+2)$. We deduce that the
zero of $G(x)$, together with the $n-2$ zeros of $P_{n-2}(x;t,\la+2)$, interlaces with the $n$ zeros of $P_{n}(x;t,\la)$ and therefore the zero $\ds{\a_{n-1}+\frac{d_{n}\b_{n-1}}{e_{n}}}$ of $G(x)$ has to lie between the two extreme zeros of $P_n(x;t,\la)$. 
\end{proof}
\subsection{Differential and discrete equations satisfied by the recurrence coefficients}
In this section we discuss properties for the recurrence coefficients $\a_n(t;\la)$ and $\b_n(t;\la)$ in the three-term recurrence relation \eqref{eq:scrr}.

\begin{theorem}{
If $P_n(x)$ are the monic orthogonal polynomials for the weight $\w(x) = \e^{-\v(x)}$, then
\begin{align*}
&\left(\deriv{}{x}+B_n(x)\right) P_n(x)=\b_nA_n(x)P_{n-1}(z)\\
&\left(\deriv{}{x}-B_n(x)-\v'(x)\right) P_{n-1}(x)=-A_{n-1}(x)P_n(x),
\end{align*}
where the functions $A_n(x)$ and $B_n(x)$ are given by
\begin{align*}
A_n(x) &= \frac{1}{h_n}\int_0^\infty \frac{\v'(x)-\v'(y)}{x-y} \, P_n^2(y)\, \w(y)\,\d y\\
B_n(x) &= \frac{1}{h_{n-1}}\int_0^\infty \frac{\v'(x)-\v'(y)}{x-y} \, P_n(y)P_{n-1}(y) \,\w(y)\,\d y.
\end{align*}
}\end{theorem}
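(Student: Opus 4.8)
The plan is to prove the two ladder relations in turn: the first (lowering) relation by expanding $\deriv{P_n}{x}$ in the orthogonal basis and collapsing the resulting sum with the Christoffel--Darboux formula, and the second (raising) relation by combining the first with the three-term recurrence relation \eqref{eq:scrr} and a compatibility identity linking $A_n$, $B_n$ and $\v'$.

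For the lowering relation I would start from $\deriv{P_n}{x}(x)=\sum_{k=0}^{n-1}c_{n,k}P_k(x)$, which is valid since $\deriv{P_n}{x}$ has degree $n-1$. Multiplying by $P_k(y)\w(y)$, integrating and using orthogonality gives $h_kc_{n,k}=\intS \deriv{P_n}{y}(y)\,P_k(y)\w(y)\,\dy$; integrating by parts, using $\w'=-\v'\w$ and discarding the boundary term together with $\intS P_n(y)\deriv{P_k}{y}(y)\w(y)\,\dy=0$ (since $\deg \deriv{P_k}{y}<n$) yields $h_kc_{n,k}=\intS P_n(y)P_k(y)\v'(y)\w(y)\,\dy$ for $k=0,\dots,n-1$. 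The key manipulation is to write $\v'(y)=\v'(x)-\bigl(\v'(x)-\v'(y)\bigr)$: orthogonality annihilates the $\v'(x)$ contribution, leaving $h_kc_{n,k}=-\intS \bigl(\v'(x)-\v'(y)\bigr)P_n(y)P_k(y)\w(y)\,\dy$. Summing $\sum_{k=0}^{n-1}c_{n,k}P_k(x)$ and inserting the monic Christoffel--Darboux identity $\sum_{k=0}^{n-1}h_k^{-1}P_k(x)P_k(y)=h_{n-1}^{-1}\bigl(P_n(x)P_{n-1}(y)-P_{n-1}(x)P_n(y)\bigr)/(x-y)$ cancels the factor $x-y$ and leaves exactly two integrals, which by definition are $h_{n-1}B_n(x)$ and $h_nA_n(x)$. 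Since $\b_n=h_n/h_{n-1}$ this is $\deriv{P_n}{x}=\b_nA_nP_{n-1}-B_nP_n$, i.e.\ the first relation, and it recovers the differential-difference equation \eqref{ddee}.

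For the raising relation I would first establish the compatibility identity $B_{n+1}(x)+B_n(x)=(x-\a_n)A_n(x)-\v'(x)$. Adding the integral formulas for $B_{n+1}$ and $B_n$, using $P_{n+1}(y)=(y-\a_n)P_n(y)-\b_nP_{n-1}(y)$ together with $\b_n/h_n=1/h_{n-1}$ to combine them into $h_n^{-1}\intS \tfrac{\v'(x)-\v'(y)}{x-y}(y-\a_n)P_n^2(y)\w(y)\,\dy$, and then writing $y-\a_n=(x-\a_n)-(x-y)$, splits this expression into $(x-\a_n)A_n(x)-\v'(x)+h_n^{-1}\intS \v'(y)P_n^2(y)\w(y)\,\dy$, and the last integral vanishes because one further integration by parts turns it into $2h_n^{-1}\intS P_n(y)\deriv{P_n}{y}(y)\w(y)\,\dy=0$. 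Granted this identity, applying the lowering relation at index $n-1$ and subtracting $\bigl(B_n(x)+\v'(x)\bigr)P_{n-1}(x)$, the coefficient of $P_{n-1}$ becomes $-(x-\a_{n-1})A_{n-1}(x)$ by the compatibility identity with $n$ replaced by $n-1$, and \eqref{eq:scrr}, written as $(x-\a_{n-1})P_{n-1}(x)-\b_{n-1}P_{n-2}(x)=P_n(x)$, then collapses everything to $-A_{n-1}(x)P_n(x)$, which is the second relation (for $n=1$ this reduces to the compatibility identity at $n=0$ with the convention $B_0\equiv0$).

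The algebra throughout is routine. The two points that genuinely need attention are: first, the vanishing of the boundary terms in each integration by parts, which is where the behaviour of $\w=\e^{-\v}$ at the ends of the support enters and, for a weight such as \eqref{genAiry} carrying a factor $x^\la$, forces a restriction on $\la$ (for small $\la$ one must instead use the modification of Chen and Feigin \cite{refChenFeigin}); and second, the observation that, after the substitution $\v'(y)=\v'(x)-(\v'(x)-\v'(y))$, the sum over $k$ is precisely a Christoffel--Darboux kernel. I expect the identification of that kernel to be the crux; the remainder follows the template of Shohat \cite{refShohat39} and Chen and Ismail \cite{refCI97}, see also \cite[Theorem 3.2.1]{refIsmail}.
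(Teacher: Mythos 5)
Your argument is correct, and it is essentially the standard Chen--Ismail derivation that the paper itself does not reproduce but simply quotes (see \cite{refCI97} and \cite[Theorem 3.2.1]{refIsmail}): the lowering relation by expanding $\deriv{P_n}{x}$ in the orthogonal basis, integrating by parts with $\w'=-\v'\w$ and collapsing the sum with the Christoffel--Darboux kernel, and the raising relation from the compatibility identity $B_{n+1}(x)+B_n(x)=(x-\a_n)A_n(x)-\v'(x)$ (which the paper records separately as \eqref{eq1}) combined with the three-term recurrence, is exactly that proof. Your caveat about the boundary terms and the restriction on $\la$, with the Chen--Feigin modification \cite{refChenFeigin} for small $\la$, is likewise consistent with the paper's own remarks preceding Theorem \ref{thm:dde}.
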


The functions $A_n(x)$ and $B_n(x)$ also 
satisfy the following supplementary conditions.
\begin{lemma}{The functions $A_n(x)$ and $B_n(x)$ satisfy
\begin{align}
& B_{n+1}(x)+B_n(x) = (x-\a_n)A_n(x)-\v'(x)\label{eq1}\\
& 1+(x-\a_n)[B_{n+1}(x)-B_n(x)] = \b_{n+1}A_{n+1}(x)-\b_nA_{n-1}(x)\label{eq2}\\
& B_n^2(x)+\v'(x)B_n(x)+\sum_{j=0}^{n-1}A_j(x)=\b_nA_n(x)A_{n-1}(x).\label{eq3}
\end{align}
}\end{lemma}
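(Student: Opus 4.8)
The plan is to derive the three supplementary conditions \eqref{eq1}–\eqref{eq3} directly from the integral representations of $A_n(x)$ and $B_n(x)$ given in the preceding theorem, using only the three-term recurrence relation \eqref{eq:3trr} and the orthogonality relation. The underlying identity in every case is the elementary telescoping fact $\dfrac{\v'(x)-\v'(y)}{x-y}$ is a polynomial in $y$ (and $x$) when $\v'$ is, together with the observation that multiplying the numerator $P_n(y)P_k(y)$ inside the integrand by a factor $(y-\a_n)$ or similar can be resolved via \eqref{eq:3trr}.

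First, for \eqref{eq1}, I would start from $B_{n+1}(x)+B_n(x)$, write each as the corresponding integral, and combine them into a single integral whose numerator is $P_n(y)\bigl[P_{n+1}(y)+P_n(y)\cdot(\text{something})\bigr]$; more precisely, use $P_{n+1}(y)+\a_nP_n(y)+\b_nP_{n-1}(y)=yP_n(y)$ from \eqref{eq:3trr} to rewrite $\frac{1}{h_n}P_n P_{n+1}+\frac{1}{h_{n-1}}P_nP_{n-1}$ in terms of $\frac{1}{h_n}P_n(y)\,(y-\a_n)P_n(y)$ after using $h_n=\b_n h_{n-1}$. Then the factor $(y-\a_n)$ splits as $(x-\a_n)-(x-y)$; the $(x-\a_n)$ piece reproduces $(x-\a_n)A_n(x)$, while the $-(x-y)$ piece cancels the denominator and leaves $-\frac{1}{h_n}\int \v'(y)P_n^2(y)\w(y)\,\d y$, which integrates by parts (using $\w=\e^{-\v}$, so $-\v'(y)\w(y)=\w'(y)$) to give $-\v'(x)$ plus a vanishing boundary term plus terms that are $\int (P_n^2)'\w$, which vanish by orthogonality since $(P_n^2)'$ has degree $2n-1$... one must be slightly careful here and instead recognise that $\int \v'(y)P_n^2\w\,\d y = -\int (P_n^2)'\w\,\d y = 0$ by integration by parts and orthogonality, giving exactly $-\v'(x)$ after the constant-in-$y$ terms are handled. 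This establishes \eqref{eq1}.

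For \eqref{eq2} and \eqref{eq3} the strategy is the same but the bookkeeping is heavier: \eqref{eq2} comes from forming $\b_{n+1}A_{n+1}(x)-\b_nA_{n-1}(x)$, expressing the numerators $\b_{n+1}P_{n+1}^2$ and $\b_n P_{n-1}^2$ via \eqref{eq:3trr} applied twice to pull them toward $P_n^2$ and $P_nP_{n\pm1}$, and again splitting the resulting polynomial factors into an $(x-\a_n)$-part and an $(x-y)$-part; the $(x-y)$-parts collapse the kernel and reduce to moment identities. Equation \eqref{eq3} is the "energy" identity: one squares $B_n$, uses the product $P_nP_{n-1}\cdot P_nP_{n-1}$ which by \eqref{eq:3trr} and orthogonality telescopes, and the sum $\sum_{j=0}^{n-1}A_j(x)$ telescopes because consecutive $A_j$'s differ by an expression controlled by \eqref{eq1}; alternatively \eqref{eq3} follows formally from \eqref{eq1} and \eqref{eq2} by an inductive argument on $n$, starting from $n=0$ where the sum is empty and $B_0\equiv 0$. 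I would present whichever route is shorter; the inductive derivation of \eqref{eq3} from \eqref{eq1} and \eqref{eq2} is likely the cleanest.

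The main obstacle I anticipate is the careful treatment of the non-polynomial part of $\v$ (for the generalised Airy weight $\v(x)=\tfrac13x^3-tx-\la\ln x$, so $\v'(x)=x^2-t-\la/x$ has a pole at $x=0$) together with the boundary terms at $x=0$ and $x=\infty$ in the integrations by parts: the kernel $\dfrac{\v'(x)-\v'(y)}{x-y}$ is no longer a polynomial, it acquires a term $\dfrac{\la}{xy}$, and one must verify that all the boundary contributions at $0$ vanish (which uses $\la>-1$ so that $x^{\la}$-type factors are integrable and the boundary term $x P_k P_n \w\big|_0$ vanishes, exactly as in the proof of Theorem \ref{thm:dde}) and that the resulting rational, rather than polynomial, pieces still produce the stated $A_n(x)=x+\a_n+R_n/x$, $B_n(x)=\b_n+r_n/x$ when specialised. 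Since the lemma is stated abstractly for a general $\w=\e^{-\v}$, I would prove \eqref{eq1}–\eqref{eq3} at the level of formal manipulation of the integral representations, flagging that for the weights of interest the boundary terms vanish by the decay and the constraint $\la>-1$; for a fully general $\v$ this is the standard ladder-operator computation and I would cite \cite[\S3.2]{refIsmail} for the details while indicating the key cancellations above.
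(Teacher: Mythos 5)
Your proposal follows essentially the same route as the paper, whose ``proof'' is simply a citation of the standard ladder-operator compatibility conditions \cite[Lemma 3.2.2, Theorem 3.2.4]{refIsmail} (see also \cite[Proposition 3.1]{refFVAZ}): namely the integral representations combined with the three-term recurrence and the splitting $(y-\a_n)=(x-\a_n)-(x-y)$ plus integration by parts for \eqref{eq1}, the analogous manipulation for \eqref{eq2} (where the constant $1$ comes from the moment identity $\frac{1}{h_n}\int (P_{n+1}P_n)'\w\,\d y-\frac{1}{h_{n-1}}\int (P_nP_{n-1})'\w\,\d y=(n+1)-n$, which your ``moment identities'' remark gestures at without making explicit), and the telescoping of \eqref{eq1}--\eqref{eq2} with $B_0\equiv 0$ for \eqref{eq3}. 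Apart from a harmless sign slip (up to boundary terms $\int \v'(y)P_n^2(y)\w(y)\,\d y=+\int (P_n^2)'(y)\w(y)\,\d y$, both being zero) and the admittedly sketchy bookkeeping for \eqref{eq2}, the argument is sound and matches the cited source, including your appropriate caveat about the $\la/x$ term and the vanishing of boundary contributions for $\la>-1$.
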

\begin{proof} See \cite[Lemma 3.2.2, Theorem 3.2.4]{refIsmail}; see also \cite[Proposition 3.1]{refFVAZ}. \end{proof}

\begin{theorem}\label{thm:anbn1}
The recurrence coefficients $\a_n(t;\la)$ and $\b_n(t;\la)$ for the generalised Airy weight \eqref{genAiry} satisfy the discrete system
\begin{subequations}\label{anbn:dissys}\begin{align} \label{anbn:dissysa}
& (2\a_n+\a_{n-1})\b_n + (\a_{n+1}+2\a_{n})\b_{n+1}+\a_n^3-t\a_n =2n+\la+1\\
& \b_n^3 + (\b_{n+1}+\b_{n-1}-2\a_n\a_{n-1}-2t)\b_n^2 \nonumber\\ &\qquad + \{ (\b_{n+1}+\a_n^2-t)(\b_{n-1}+\a_{n-1}^2-t) +(\a_n+\a_{n-1})(2n+\la) \}\b_n = n(n+\la).
\end{align}\end{subequations}
\end{theorem}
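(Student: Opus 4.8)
The plan is to exploit the ladder-operator relations and their supplementary (compatibility) conditions \eqref{eq1}--\eqref{eq3}, specialised to the generalised Airy weight, together with the explicit forms of $A_n(x)$ and $B_n(x)$ obtained in Theorem \ref{thm:dde}. For the weight \eqref{genAiry} we have $\v(x)=\tfrac13x^3-tx-\la\ln x$, so $\v'(x)=x^2-t-\la/x$; the difference quotient $\bigl(\v'(x)-\v'(y)\bigr)/(x-y)=x+y-\la/(xy)$ is a Laurent polynomial, which is why $A_n(x)=x+\a_n+R_n/x$ and $B_n(x)=\b_n+r_n/x$ with $R_n$ and $r_n$ as in \eqref{def:Rnrn}. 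First I would record one further identity: equating \eqref{quasi} for $k=n$ with the $c_{n,n}$ computation shows, after matching with the integral formula for $B_n$, that $r_n=(\a_n+\a_{n-1})\b_n-n$, an alternative closed form for $r_n$ (this is the expression ``$r_n=(\a_n+\a_{n-1})\b_n-n$'' already alluded to in the proof of the differential-equation theorem). Combining this with \eqref{def:Rnrnb} gives a first relation linking $\a$'s and $\b$'s at consecutive indices.

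Next I would substitute $A_n(x)=x+\a_n+R_n/x$ and $B_n(x)=\b_n+r_n/x$ into the three compatibility conditions. Condition \eqref{eq1}, $B_{n+1}+B_n=(x-\a_n)A_n-\v'(x)$, expands as $\b_{n+1}+\b_n+(r_{n+1}+r_n)/x = x^2+(\a_n+\a_n)x+\dots$; but this is automatically consistent with the definitions of $R_n,r_n$ and yields nothing new, so the content must come from \eqref{eq2} and \eqref{eq3}. Condition \eqref{eq2} is a Laurent polynomial identity in $x$ of the form (const) $+$ (coeff)$/x$ $+$ (coeff)$/x^2$ $=0$; collecting the $x^0$-coefficient will give the first equation \eqref{anbn:dissysa} (the ``string equation''), after using $r_n=(\a_n+\a_{n-1})\b_n-n$ to eliminate $r_n$ in favour of $\a$'s and $\b$'s and then simplifying the telescoping sum that appears. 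The $1/x$ and $1/x^2$ coefficients will either be redundant or produce a lower-order relation that helps in the second step.

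For the second, cubic-in-$\b_n$ relation I would use \eqref{eq3}, $B_n^2+\v'B_n+\sum_{j=0}^{n-1}A_j=\b_nA_nA_{n-1}$. The left side is $(\b_n+r_n/x)^2+(x^2-t-\la/x)(\b_n+r_n/x)+\sum_{j=0}^{n-1}(x+\a_j+R_j/x)$; the sums $\sum\a_j$ and $\sum R_j$ must be evaluated using the Toda-type/string relations already derived — in particular $\sum_{j=0}^{n-1}\a_j$ can be extracted by summing \eqref{anbn:dissysa} over $n$ (it telescopes), and $\sum R_j=\sum(\a_j^2+\b_j+\b_{j+1}-t)$ similarly collapses. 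The right side is $\b_n(x+\a_n+R_n/x)(x+\a_{n-1}+R_{n-1}/x)$. Equating the coefficient of $x^0$ (or of $1/x$, whichever is cleanest) then gives a polynomial relation among $\a_{n-1},\a_n,\a_{n+1},\b_{n-1},\b_n,\b_{n+1}$; substituting $R_n=\a_n^2+\b_n+\b_{n+1}-t$ and using \eqref{anbn:dissysa} to remove the odd-degree $\a$ terms should collapse it to the stated cubic $\b_n^3+\dots=n(n+\la)$.

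The main obstacle is bookkeeping: the compatibility conditions produce several Laurent coefficients, and it is easy to conflate a genuinely new relation with a tautology, so I would be careful to track exactly which coefficient of which condition yields each of the two displayed equations, and to verify that the evaluation of $\sum_{j<n}\a_j$ and $\sum_{j<n}R_j$ is consistent with the boundary data ($\a_0=\mu_1/\mu_0$, $\b_0=0$ or $\b_0=\mu_0$ by convention). A useful cross-check at the end is to confirm that when $t=0$ and $\la\mapsto 2\la+1$ with the quadratic substitution $x\mapsto x^2$, the system \eqref{anbn:dissys} degenerates correctly to the known discrete equations for the symmetrised sextic Freud case treated later in \S\ref{sec:gen6freud}, and to check the $n=1$ case directly against $\a_0,\a_1,\b_1$ computed from the moments \eqref{eq:mu0}.
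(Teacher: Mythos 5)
Your general toolkit---substituting the explicit forms \eqref{AnBna} into the compatibility conditions \eqref{eq1}--\eqref{eq3} and comparing Laurent coefficients, using $R_n=\a_n^2+\b_n+\b_{n+1}-t$ and $r_n=(\a_n+\a_{n-1})\b_n-n$---is the same as the paper's, but your assignment of which coefficient of which condition produces each equation is wrong, and as written the argument would not close. Condition \eqref{eq1} is not vacuous: its $x^0$ coefficient only reproduces $R_n=\a_n^2+\b_n+\b_{n+1}-t$, but its $x^{-1}$ coefficient gives $r_n+r_{n+1}=\la-\a_nR_n$, and it is precisely this relation, combined with $r_n=(\a_n+\a_{n-1})\b_n-n$ (which is the coefficient of $x$ in \eqref{eq3}), that yields the string equation: $(\a_n+\a_{n-1})\b_n-n+(\a_{n+1}+\a_n)\b_{n+1}-(n+1)=\la-\a_n(\a_n^2+\b_n+\b_{n+1}-t)$ rearranges to exactly \eqref{anbn:dissysa}. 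By contrast, the $x^0$ coefficient of \eqref{eq2}, namely $1+r_{n+1}-r_n-\a_n(\b_{n+1}-\b_n)=\a_{n+1}\b_{n+1}-\a_{n-1}\b_n$, becomes an identity once $r_n=(\a_n+\a_{n-1})\b_n-n$ is inserted; \eqref{eq2} encodes at most the first difference of the string equation, so the route you describe loses the $n$-dependent right-hand side $2n+\la+1$ and cannot recover \eqref{anbn:dissysa}. In short, you have discarded the condition that carries the content and kept the one that is redundant.

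For the cubic relation, your plan to take the $x^0$ or $x^{-1}$ coefficient of \eqref{eq3} forces you to evaluate $\sum_{j=0}^{n-1}\a_j$ and $\sum_{j=0}^{n-1}R_j$, and these do not collapse by ``telescoping'' the string equation: summing \eqref{anbn:dissysa} over $j$ produces mixed sums such as $\sum(\a_j+\a_{j+1})\b_{j+1}$ and $\sum\a_j^3$, not a closed local expression in $\a_{n\pm1},\a_n,\b_{n\pm1},\b_n$. The sums are avoided entirely by taking the $x^{-2}$ coefficient of \eqref{eq3}, to which $\sum_{j<n}A_j(x)$ does not contribute; this gives $r_n^2-\la r_n=\b_nR_nR_{n-1}$, and substituting $r_n=(\a_n+\a_{n-1})\b_n-n$ and $R_n=\a_n^2+\b_n+\b_{n+1}-t$ yields the second equation of \eqref{anbn:dissys} directly (this is the paper's route). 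Also, your preliminary derivation of $r_n=(\a_n+\a_{n-1})\b_n-n$ by ``matching with the integral formula for $B_n$'' is only a sketch; the clean source is again the coefficient of $x$ in \eqref{eq3}. With these reassignments the proof goes through, but as stated the proposal has a genuine gap.
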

\begin{proof} 
Substituting \eqref{AnBna} into \eqref{eq1}, with $v(x)=\tfrac13x^3-tx-\la\ln(x)$, then from the coefficients of $x^0$ and $x^{-1}$ we obtain
\begin{align}
&\b_n+\b_{n+1}=R_n-\a_n^2 + t \label{eq1a}\\
& r_n+r_{n+1} =-\a_n R_n + \la\label{eq1b}
\end{align}
\comment{Similarly from \eqref{eq3} we obtain
\begin{align}
&\a_n(\b_n-\b_{n+1}) + r_{n+1}-r_n + 1 = \a_{n+1}\b_{n+1}-\a_{n-1}\b_n\label{eq2a}\\
&\a_n(r_n-r_{n+1}) =R_{n+1}\b_{n+1}-R_{n-1}\b_n .\label{eq2b}
\end{align}}%
and substituting \eqref{AnBna} into \eqref{eq2}, then from the coefficients of $x$ and $x^{-2}$ we obtain
\begin{align}
&r_n+n=(\a_n+\a_{n-1})\b_n\label{eq3a}\\
&r_n^2-\la r_n = \b_n R_n R_{n-1}.\label{eq3b}
\end{align}
From \eqref{eq1a} and \eqref{eq3a}, respectively, we see that 
\begin{align} &R_n = \b_n+\b_{n+1}+\a_n^2-t\label{eq4a}\\ &r_n= (\a_n+\a_{n-1})\b_n-n.\label{eq4b}\end{align}
Substituting these into \eqref{eq1b} and \eqref{eq3b} gives the discrete system \eqref{anbn:dissys}, as required.
 \end{proof}
\begin{corollary}
{For the generalised Airy weight \eqref{genAiry}
the monic orthogonal polynomials $P_{n}(x;t,\la)$ with respect to this weight satisfy the differential-difference equation \eqref{ddee}
with
\begin{align*}
A_n(x)&= x + \a_n + \frac{R_n}{x},\qquad
B_n(x)=\b_n + \frac{r_n}{x}
\end{align*}
where $\a_n(t;\la)$ and $\b_n(t;\la)$ are the coefficients in the three-term recurrence relation \eqref{eq:scrr} and 
\begin{align*}R_n& = \b_n+\b_{n+1}+\a_n^2-t,\qquad r_n = (\a_n+\a_{n-1})\b_n-n.\end{align*}}
\end{corollary}
\begin{proof} The result follows from Theorem \ref{thm:dde} by substituting \eqref{anbn:dissysa} into \eqref{def:Rnrnb}.\end{proof}

\begin{theorem}\label{thm:anbn2}
The recurrence coefficients $\a_n(t;\la)$ and $\b_n(t;\la)$ for the generalised Airy weight \eqref{genAiry} satisfy the differential system
\begin{subequations}\label{anbn:diffsys}\begin{align} 
&\deriv[2]{\a_n}{t}+3\a_n\deriv{\a_n}{t}+\a_n^3+(6\b_n-t)\a_n = 2n+\la+1\\
&\left(\deriv{\a_n}{t}+\a_n^2+2\b_n-t\right)\deriv[2]{\b_n}{t}-\left(\deriv{\b_n}{t}\right)^2-\left(2\a_n\deriv{\a_n}{t}+2\a_n^3-2t\a_n+2n+\la\right) \deriv{\b_n}{t}\nonumber\\
&\qquad -\b_n \left(\deriv{\a_n}{t}\right)^2 + 4\b_n^3-4t\b_n^2+\{\a_n^4-2t\a_n^2+2(2n+\la)\a_n+t^2\}\b_n = n(n+\la).
\end{align}\end{subequations}
\end{theorem}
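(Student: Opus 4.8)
The plan is to derive the differential system from the discrete system of Theorem~\ref{thm:anbn1} together with the Toda system of Theorem~\ref{thm:todasys}, exactly as the differential-difference equation in Theorem~\ref{thm:dde} was converted to the simplified form in the Corollary above. The two ingredients already in hand are: (a) the coupled first-order relations $\deriv{\a_n}{t}=\b_{n+1}-\b_n$ and $\deriv{\b_n}{t}=\b_n(\a_n-\a_{n-1})$; and (b) the two algebraic constraints \eqref{anbn:dissysa} and its partner, which involve $\a_{n\pm1}$, $\b_{n\pm1}$ as well as $\a_n,\b_n$. The strategy in both cases is to use (a) to trade the neighbouring-index quantities $\a_{n-1},\a_{n+1},\b_{n-1},\b_{n+1}$ for $\a_n,\b_n$ and their $t$-derivatives, and then substitute into (b).

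First I would handle the first equation. From the Toda system, $\b_{n+1}-\b_n=\deriv{\a_n}{t}$, so $\b_n+\b_{n+1}=2\b_n+\deriv{\a_n}{t}$; and from $\deriv{\b_n}{t}=\b_n(\a_n-\a_{n-1})$ we get $\a_{n-1}=\a_n-\tfrac1{\b_n}\deriv{\b_n}{t}$, hence $2\a_n+\a_{n-1}=3\a_n-\tfrac1{\b_n}\deriv{\b_n}{t}$ and likewise $\a_{n+1}+2\a_n=3\a_n+\tfrac1{\b_{n+1}}\deriv{\b_{n+1}}{t}$. Substituting these expressions, together with $\b_{n+1}=\b_n+\deriv{\a_n}{t}$, into \eqref{anbn:dissysa} should, after the $\b_n$-weighted terms combine, collapse to a relation whose only surviving second-derivative term is $\deriv[2]{\a_n}{t}$ (arising because $\deriv{}{t}(\b_{n+1}-\b_n)=\deriv[2]{\a_n}{t}$); matching this against the claimed first equation is then a bookkeeping check. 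An alternative, cleaner route for the first equation is to differentiate \eqref{anbn:dissysa} in $t$ and repeatedly apply (a); but using (a) directly to eliminate shifted indices is more systematic here.

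The second equation is the harder part, and I expect it to be the main obstacle: it has the shape $(\text{first-order data})\cdot\deriv[2]{\b_n}{t}-(\deriv{\b_n}{t})^2-\cdots$, so one genuinely needs a second derivative of $\b_n$, which will have to be produced by differentiating the Toda relation $\deriv{\b_n}{t}=\b_n(\a_n-\a_{n-1})$ once more and then eliminating the resulting $\deriv{\a_{n-1}}{t}=\b_n-\b_{n-1}$ and the leftover shifted variables. Concretely I would start from the second equation of \eqref{anbn:dissys}, use \eqref{eq4a}--\eqref{eq4b} to re-express $R_{n\pm1}$ and $r_n$, then systematically replace: $\b_{n-1}=\b_n-\deriv{\a_{n-1}}{t}$ is awkward, so better to carry $\b_{n-1}$ via $\deriv{\b_n}{t}=\b_n(\a_n-\a_{n-1})$ to get $\a_{n-1}$, and $\b_{n+1}=\b_n+\deriv{\a_n}{t}$, and then the combination $\b_{n+1}+\b_{n-1}$ together with the cross term $\a_n\a_{n-1}$ that appears will, after substitution, feed the $\deriv{\a_n}{t}$ and $\deriv{\b_n}{t}$ terms; the single $\deriv[2]{\b_n}{t}$ has to come from differentiating $\deriv{\b_n}{t}=\b_n(\a_n-\a_{n-1})$, which introduces $\deriv{\a_{n-1}}{t}=\b_n-\b_{n-1}$, and $\b_{n-1}$ is then eliminated as above, closing the system. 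The risk is that the algebra balloons; to keep it honest I would verify the final identity at the level of each monomial in $\a_n,\b_n$ and their derivatives, and as a sanity check confirm consistency with the known $t=0$ or $\la=-\tfrac12$ specialisations via Remark~\ref{rmks32}. Once both reductions are carried out and matched term-by-term with \eqref{anbn:diffsys}, the proof is complete. $\qed$
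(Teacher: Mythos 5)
Your proposal is correct and follows essentially the same route as the paper: the paper likewise uses the Toda system to write $\b_{n+1}=\b_n+\deriv{\a_n}{t}$, $\a_{n\pm1}=\a_n\pm\deriv{}{t}\ln\b_{n+\frac{1\pm1}{2}}$ and $\b_{n-1}=\b_n-\deriv{\a_n}{t}+\deriv[2]{}{t}\ln\b_n$, and substitutes these into the discrete system \eqref{anbn:dissys}, the second derivatives $\deriv[2]{\a_n}{t}$ and $\deriv[2]{\b_n}{t}$ arising exactly where you indicate. The remaining work in both versions is the same term-by-term algebraic verification.
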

\begin{proof} Recall that from Theorem \ref{thm:todasys}, $\a_n$ and $\b_n$ satisfy the Toda system
\beq\label{eq:toda2}
\deriv{\a_n}t=\b_{n+1}-\b_n,\qquad\deriv{\b_n}t=\b_n(\a_n-\a_{n-1}).\eeq
We can use these to eliminate $\b_{n+1}$, $\b_{n-1}$, $\a_{n+1}$ and $\a_{n-1}$ from the discrete system \eqref{anbn:dissys}. Substituting
\begin{align*}&\b_{n+1}=\b_n+\deriv{\a_n}t, &&
\a_{n+1}= \a_n +\deriv{}t \ln\b_{n+1} =\a_n +\deriv{}t \ln\left(\b_n+\deriv{\a_n}t\right)\\ 
&\a_{n-1} = \a_n-\deriv{}t \ln\b_n, && \b_{n-1} = \b_n-\deriv{\a_{n-1}}t = \b_n-\deriv{}t \a_{n}+\deriv[2]{}t \ln\b_n,
\end{align*}
into \eqref{anbn:dissys} gives the differential system \eqref{anbn:diffsys}, as required.\end{proof}
\comment{ \beq \deriv[2]{\a_n}{t}+3\a_n\deriv{\a_n}{t}+\a_n^3+(6\b_n-t)\a_n = 2n+\la+1.
\label{eq4ode}\eeq
\comment{Using \eqref{eq:toda2} to eliminate $\b_{n+2}$, $\b_{n+1}$, $\a_{n+1}$ and $\a_{n-1}$ from \eqref{eq6} yields
\[ \deriv[3]{\a_n}{t}+3\a_n\deriv[2]{\a_n}{t}+3\left(\deriv{\a_n}{t}\right)^2+(3\a_n^2+6\b_n-t)\deriv{\a_n}{t} + 6\a_n\deriv{\b_n}{t} -\a_n =0\]
which is the differential of \eqref{eq4ode} w.r.t.\ $t$. }%
Using \eqref{eq:toda2} to eliminate $\b_{n+1}$, $\a_{n+1}$ and $\a_{n-1}$ from \eqref{eq6} yields
\begin{align}
&\left(\deriv{\a_n}{t}+\a_n^2+2\b_n-t\right)\deriv[2]{\b_n}{t}-\left(\deriv{\b_n}{t}\right)^2-\left(2\a_n\deriv{\a_n}{t}+2\a_n^3-2t\a_n+2n+\la\right) \deriv{\a_n}{t}\nonumber\\
&\qquad -\b_n \left(\deriv{\a_n}{t}\right)^2 + 4\b_n^3-4t\b_n^2+\{\a_n^4-2t\a_n^2+2(n+\la)\a_n+t^2\}\b_n = n(n+\la).\label{eq5ode}
\end{align}}


\subsection{Asymptotics of the recurrence coefficients.}\label{sec:rc}

\begin{lemma}{As $n\to\infty$, the recurrence coefficients $\a_n(t;\la)$ and $\b_n(t;\la)$ have the formal asymptotic expansions
\begin{subequations}\label{nasym:anbn}\begin{align}
\a_n(t;\la)&= \frac{2n^{1/3}}{\k} + \frac{\k t}{15n^{1/3}}+\frac{\k^{2}(\la+1)}{30\,n^{2/3}} + \O(n^{-1})\\
\b_n(t;\la)&= \frac{n^{2/3}}{\k^2} + \frac{t}{15}+\frac{\k \la}{30\,n^{1/3}}+\frac{\k^{2}t^2}{900\,n^{2/3}} + \O(n^{-1}),
\end{align}\end{subequations}
where $\k=\sqrt[3]{10}$.
}\end{lemma}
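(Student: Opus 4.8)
The plan is to substitute the ansatz \eqref{nasym:anbn} into the discrete system \eqref{anbn:dissys} of Theorem~\ref{thm:anbn1} and match powers of $n$ to determine the coefficients recursively. First I would posit the general form
\[
\a_n(t;\la)= a_0 n^{1/3} + a_1 n^{-1/3} + a_2 n^{-2/3} + \O(n^{-1}),\qquad
\b_n(t;\la)= b_0 n^{2/3} + b_1 + b_2 n^{-1/3} + b_3 n^{-2/3} + \O(n^{-1}),
\]
with coefficients independent of $n$ (but depending on $t$, $\la$), motivated by the expected balance in \eqref{anbn:dissysa}: the term $\a_n^3$ is cubic and must balance $2n+\la+1$, forcing $a_0^3 = 2$ at leading order unless the $\b_n\a_n$ terms also contribute at order $n$ — they do, since $\b_n\a_n \sim b_0 a_0 n$, so the true leading relation is $a_0^3 + 6 b_0 a_0 = 2$ together with a second relation from the $\b_n$-equation. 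The key computational step is to expand $\a_{n\pm1}$ and $\b_{n\pm1}$ about $n$: writing $(n\pm1)^{1/3} = n^{1/3}(1 \pm \tfrac13 n^{-1} + \O(n^{-2}))$, etc., each shifted quantity becomes a series in $n^{-1/3}$ whose coefficients are polynomials in $a_i, b_j$. Substituting into both equations of \eqref{anbn:dissys} and collecting like powers of $n$ then yields, order by order, a triangular system for $(a_0,b_0)$, then $(a_1,b_1)$, then $(a_2,b_2)$, and so on.

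Carrying this out: at the top order one gets two coupled polynomial equations in $a_0,b_0$ (from the $n^1$-coefficient of \eqref{anbn:dissysa} and the $n^2$-coefficient of the second equation); solving them should give $a_0 = 2/\k$, $b_0 = 1/\k^2$ with $\k=\sqrt[3]{10}$, since $a_0 = 2 b_0^{1/2}\cdot(\text{const})$ and the consistency condition produces the cubic $\k^3 = 10$. Here I would use the fact that the Freud-type scaling for a weight $\exp(-\tfrac13 x^3)$ on $\R^+$ gives $\b_n \sim c\, n^{2/3}$ (the Mhaskar--Rakhmanov--Saff number for the cubic scales like $n^{1/3}$), which both fixes the form of the ansatz and serves as an independent sanity check. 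At each subsequent order the equations are \emph{linear} in the next pair of unknowns (the nonlinear terms only reproduce already-determined lower-order data plus linear contributions from the new coefficients), so the recursion is well-posed and the coefficients are uniquely determined; one simply reads off $a_1 = \k t/15$, $b_1 = t/15$, then $a_2 = \k^2(\la+1)/30$, $b_2 = \k\la/30$, and the stated $n^{-2/3}$ correction $\k^2 t^2/900$ to $\b_n$. As a cross-check, one can verify that the same expansion satisfies the differential system \eqref{anbn:diffsys} of Theorem~\ref{thm:anbn2}, or the Toda system \eqref{eq:toda2}, since differentiating a power series in $n^{-1/3}$ with $t$-dependent coefficients is immediate and the relation $\b_{n+1}-\b_n = \d\a_n/\d t$ gives another consistency constraint that the coefficients must (and will) satisfy.

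The main obstacle is purely organisational rather than conceptual: the bookkeeping in expanding $\a_{n\pm1}$, $\b_{n\pm1}$ to three or four orders in $n^{-1/3}$ and then cubing/multiplying the resulting series in the second equation of \eqref{anbn:dissys} is lengthy and error-prone, and one must be careful that the $n^1$ and $n^{2/3}$ terms of the two equations genuinely decouple in the right triangular order. A secondary subtlety is justifying that an expansion in powers of $n^{-1/3}$ (rather than, say, $n^{-1/3}$ with logarithmic corrections, or with oscillatory terms) is the correct form; since the statement only claims a \emph{formal} asymptotic expansion, it suffices to exhibit coefficients for which the system is satisfied to the stated order, so no convergence or error-bound analysis is needed. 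I would organise the computation by introducing $m = n^{1/3}$ as a formal large parameter, rewriting everything as Laurent series in $m$, and matching coefficients of $m^{3}, m^{2}, m^{1}, m^{0}, m^{-1}, m^{-2}$ in turn.
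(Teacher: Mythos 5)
Your proposal is correct and follows essentially the same strategy as the paper: obtain the leading behaviour $\a_n\sim 2n^{1/3}/\k$, $\b_n\sim n^{2/3}/\k^2$ with $\k^3=10$ from the dominant balance in the discrete system \eqref{anbn:dissys}, then posit an expansion in powers of $n^{-1/3}$ with $t$-dependent coefficients and determine them order by order. Two differences are worth noting. First, after fixing the leading order from \eqref{anbn:dissys}, the paper substitutes the full ansatz into the \emph{differential} system \eqref{anbn:diffsys} rather than back into the discrete one, which avoids expanding the shifted indices; your route of expanding $\a_{n\pm1},\b_{n\pm1}$ via $(n\pm1)^{k/3}=n^{k/3}\bigl(1\pm\tfrac{k}{3}n^{-1}+\cdots\bigr)$ and matching in \eqref{anbn:dissys} works just as well (the order-by-order linear systems are nonsingular; for instance the first subleading order yields two independent linear relations for the corresponding pair of coefficients, forcing them to vanish), and your proposed cross-check against the Toda system \eqref{eq:toda2} is precisely the link the paper exploits between the two systems. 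Second, your ansatz omits the $n^{0}$ term in $\a_n$ and the $n^{1/3}$ term in $\b_n$: since the lemma asserts an expansion of the actual recurrence coefficients, you should include these terms — the paper writes them as $a_0(t)$ and $\tilde b_1(t)\,n^{1/3}$ — and let the matching show that they vanish, rather than assuming their absence a priori; with them included, your triangular recursion goes through verbatim and reproduces \eqref{nasym:anbn}.
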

\comment{\begin{align*}
\a_n(t;\la)&= 2N + \frac{t}{15N}+\frac{\la+1}{30N^2} + \O(N^{-3})\\
\b_n(t;\la)&= N^2 + \frac{t}{15}+\frac{\la}{30N}+\frac{t^2}{900N^2} + \O(N^{-3})\\
\end{align*}
where $N=\sqrt[3]{n/10}$.}
\begin{proof}
From \eqref{anbn:dissys}, it follows that as $n\to\infty$, $\a_n\sim\a n^{1/3}$ and $\b_n\sim\b n^{2/3}$, where $\a$ and $\b$ are constants that satisfy the algebraic system
\begin{align*} 
& 6\a\b+\a^3 =2,\qquad
4\b^3 +\a^4\b +4\a\b = 1,
\end{align*}
which has solution $\a=2/\k$ and $\b=1/\k^2$, with $\k=\sqrt[3]{10}$. Now we suppose that as $n\to\infty$
\begin{subequations} \label{anbn:nexp} \begin{align}
\a_n(t;\la)&= \frac{2n^{1/3}}{\k} + a_0(t) + \frac{a_1(t)}{n^{1/3}} + \frac{a_2(t)}{n^{2/3}} + \O(n^{-1})\\
\b_n(t;\la)&= \frac{n^{2/3}}{\k^2} + \tilde{b}_1(t)n^{1/3}+ b_0(t) + \frac{b_1(t)}{n^{1/3}} + \frac{b_2(t)}{n^{2/3}} + \O(n^{-1}),
\end{align}\end{subequations}
where $a_0(t)$, $a_1(t)$, $a_2(t)$, $\tilde{b}_1(t)$, $b_0(t)$, $b_1(t)$ and $b_2(t)$ are to be determined. Substituting \eqref{anbn:nexp} into the system \eqref{anbn:diffsys}, equating coefficients of powers of $n$ and solving the resulting system gives
\[ \begin{split} &a_0(t) = 0, \qquad a_1(t)=\frac{\k t}{15},\qquad a_2(t)= \frac{\k^2(\la+1)}{30}\\
& \tilde{b}_1(t)= 0, \qquad b_0(t)=\frac{t}{15},\qquad b_1(t) = \frac{\k \la}{30},\qquad b_2(t)=\frac{\k^{2}t^2}{900},
\end{split} \]
and so we obtain \eqref{nasym:anbn}, as required.
\end{proof}

\begin{lemma}{\label{lem:anbn:asympt} As $t\to\infty$, the recurrence coefficients $\a_n(t;\la)$ and $\b_n(t;\la)$ for the generalised Airy weight \eqref{genAiry}
have the formal asymptotic expansions
\begin{subequations}\label{asym:anbntp}\begin{align}
\a_n(t;\la)&= \sqrt{t}- \frac{2n-2\la+1}{4t} + \O(t^{-5/2})\label{asym:antp}\\
\b_n(t;\la) &= \frac{n}{2\sqrt{t}} + \frac{n(n-2\la)}{4t^2} + \O(t^{-7/2}).\label{asym:bntp}
\end{align}\end{subequations}
As $t\to-\infty$, the recurrence coefficients $\a_n(t;\la)$ and $\b_n(t;\la)$ have the formal asymptotic expansions
\begin{subequations}\label{asym:anbntm}\begin{align}
\a_n(t;\la)&= -\frac{2n+\la+1}{t}-\frac{(2n+\la+1)(10n^2+10n\la+\la^2+10n+5\la+6)}{t^4}+\O(t^{-7})\label{asym:antm}\\
\b_n(t;\la)& = \frac{n(n+\la)}{t^2} + \frac{4n(n+\la)(5n^2+5n\la+\la^2+1)}{t^5} +\O(t^{-8}).\label{asym:bntm}
\end{align}\end{subequations}
}\end{lemma}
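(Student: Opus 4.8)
The plan is to obtain both expansions by the method of undetermined coefficients: substitute a suitable ansatz into the differential system \eqref{anbn:diffsys} of Theorem~\ref{thm:anbn2} and match powers of $t$. I would work from \eqref{anbn:diffsys} rather than the discrete system \eqref{anbn:dissys} because it involves only $\a_n$, $\b_n$ and their $t$-derivatives at a single index $n$, so differentiating a power series in $t$ term by term and collecting like powers is immediate, with no bookkeeping of the shifts $n\mapsto n\pm1$; the discrete system \eqref{anbn:dissys} then gives an equivalent route and a convenient cross-check, since replacing $n$ by $n\pm1$ in a power series in $t$ is trivial.

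First I would fix the leading order and the shape of the series for $t\to+\infty$. The exponent $-\tfrac13x^3+tx$ of the weight \eqref{genAiry} is maximised at $x=\sqrt t$, and writing $x=\sqrt t+u$ turns the weight locally into a Gaussian of variance $\sim(2\sqrt t)^{-1}$, which predicts $\a_n\sim\sqrt t$ and $\b_n\sim n/(2\sqrt t)$. The same follows algebraically from \eqref{anbn:dissys}: the balance $\a_n^3-t\a_n\approx0$ in \eqref{anbn:dissysa} forces $\a_n^2\approx t$, and the $O(1)$ terms of the second equation of \eqref{anbn:dissys} give $-4B_0^2+2(2n+\la)B_0=n(n+\la)$, hence $B_0=n/2$. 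Since $\a_n^2-t$ is thereby pushed down to $O(t^{-1})$ relative to $t^{1/2}$, the corrections enter in steps of $t^{-3/2}$, so I posit
\[
\a_n(t;\la)=t^{1/2}+\frac{A_1}{t}+\frac{A_2}{t^{5/2}}+\cdots,\qquad
\b_n(t;\la)=\frac{B_0}{\sqrt t}+\frac{B_1}{t^{2}}+\frac{B_2}{t^{7/2}}+\cdots,
\]
with $A_j=A_j(n,\la)$, $B_j=B_j(n,\la)$ independent of $t$ and $B_0=n/2$. Substituting into the first equation of \eqref{anbn:diffsys}, the terms $\a_n^3$ and $t\a_n$ cancel at order $t^{3/2}$, and the coefficient of $t^{0}$ reads $\tfrac32+2A_1+6B_0=2n+\la+1$, giving $A_1=-\tfrac14(2n-2\la+1)$ as in \eqref{asym:antp}; the $t^{0}$ coefficient of the second equation of \eqref{anbn:diffsys} then yields $B_1=\tfrac14 n(n-2\la)$ as in \eqref{asym:bntp}. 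At each subsequent order the two equations reduce to a linear $2\times2$ system for $(A_k,B_k)$ — the linear part of $\a_n^3-t\a_n$ in the correction being $2t$ times it, and similarly for $\b_n$ — with non-degenerate matrix, so the coefficients are determined uniquely and recursively; carrying this two steps further confirms the error terms $\O(t^{-5/2})$ and $\O(t^{-7/2})$.

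For $t\to-\infty$ the argument is parallel but the concentration is now at the endpoint $x=0$, where the weight \eqref{genAiry} reduces to the Laguerre weight $x^{\la}\e^{tx}$; the rescaling $x\mapsto x/(-t)$ shows $\a_n\sim-(2n+\la+1)/t$ and $\b_n\sim n(n+\la)/t^{2}$, the Laguerre recurrence coefficients $2n+\la+1$ and $n(n+\la)$ appearing as the leading constants (this can also be read off from $\Delta_n$ via Theorem~\ref{thm:anbn} and Watson's lemma, as an independent check). The cubic term enters, after rescaling, through the factor $\exp(u^3/3t^3)$, so the expansion now runs in integer powers of $1/t$ with step $t^{-3}$: I posit $\a_n=-(2n+\la+1)/t+A_3/t^{4}+\cdots$ and $\b_n=n(n+\la)/t^{2}+B_3/t^{5}+\cdots$, substitute into \eqref{anbn:diffsys} (or \eqref{anbn:dissys}) and solve order by order to reach \eqref{asym:antm}–\eqref{asym:bntm}.

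The main obstacle is the sheer weight of the algebra: the second equation of \eqref{anbn:diffsys} is a lengthy nonlinear expression, and correctly tracking every contribution to each power of $t$ — in particular the cross terms inside $\a_n^4\b_n$, $\a_n\dot\a_n$, $(\dot\b_n)^2$ and $\b_n\ddot\b_n$ — is where care is required. One must also confirm that no intermediate powers of $t$ appear between the displayed ones; this is self-checking (any such power would force its coefficient to vanish) but has to be verified as the computation proceeds. There is no analytic difficulty, since the statement concerns formal asymptotic expansions and no convergence or remainder estimates are needed.
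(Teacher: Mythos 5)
You take a genuinely different route from the paper. The paper's proof anchors everything at $n=0$: it computes the asymptotics of the first moment $\mu_0(t;\la)$ directly (Laplace's method as $t\to+\infty$, Watson's lemma as $t\to-\infty$), feeds this into the second-order ODE satisfied by $\a_0=\deriv{}{t}\ln\mu_0$ to fix the higher coefficients at $n=0$, and then propagates to all $n$ by induction through the Toda system of Theorem~\ref{thm:todasys}, via $\b_{n+1}=\b_n+\deriv{\a_n}{t}$ and $\a_{n+1}=\a_n+\deriv{}{t}\ln\b_{n+1}$. Your plan --- an undetermined-coefficients ansatz substituted into the differential system \eqref{anbn:diffsys} --- is exactly the ``alternative method'' the paper records immediately after its proof, so the route is legitimate; what the paper's route buys is that the solution of \eqref{anbn:diffsys} corresponding to the actual recurrence coefficients is selected automatically by explicit $n=0$ data, and each induction step only differentiates known series, so the heavy algebra of the second equation of \eqref{anbn:diffsys} is never confronted.

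Two steps of your mechanism are not right as stated. First, the leading balance for $\b_n$ as $t\to+\infty$ is quadratic: $4B_0^2-2(2n+\la)B_0+n(n+\la)=0$ has the two roots $B_0=\tfrac12n$ and $B_0=\tfrac12(n+\la)$, so ``hence $B_0=n/2$'' is not automatic; you need the locally-Gaussian (Laplace point) argument you sketched, or the condition $\b_0\equiv0$, to discard the spurious root, and this should be said explicitly. Second, the claim that each subsequent order gives a non-degenerate $2\times2$ linear system fails in general. At the first correction order the unknowns are $(A_2,B_1)$: the first equation of \eqref{anbn:diffsys} contributes $2A_2+6B_1=(\text{known})$, while in the second equation $B_1$ enters only through $-4t\b_n^2$ and $2(2n+\la)\a_n\b_n$, with total coefficient $-8B_0+2(2n+\la)=2\la$, and $A_2$ does not enter at that order at all; the relevant $2\times2$ matrix has determinant $4\la$. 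For $\la=0$ (an admissible value, used in the paper's own plots) the order-$t^{-3/2}$ relation from the second equation reduces to an identity and determines nothing, so the recursion must be restaggered (pair the first equation at one order with the second at the next, or fall back on the discrete system \eqref{anbn:dissys} or the Toda relations) before you may assert that the displayed coefficients are forced. Relatedly, your bookkeeping is off: the $t^0$ coefficient of the second equation reproduces the quadratic for $B_0$, and $B_1$ first appears at order $t^{-3/2}$ with the coefficient $2\la$ just described (for $\la\neq0$ it does give $B_1=\tfrac14n(n-2\la)$). None of this is fatal --- the stated coefficients do satisfy the equations, and the $t\to-\infty$ side is free of these issues since its leading balances are linear in the unknowns --- but as written the proposal does not establish that the expansions \eqref{asym:anbntp} are uniquely determined, which is precisely what the paper's $n=0$ plus Toda-induction argument supplies.
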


\begin{proof} When $n=0$ we know that 
\[ \a_0(t;\la)=\deriv{}{t}\ln\mu_0(t;\la),\qquad \b_0(t;\la)=0\]
where \[\mu_0(t;\la)= \int_0^\infty x^{\la}\exp\left(-\tfrac13x^3+tx\right)\,\d x.\]
Further $\a_0(t;\la)$ satisfies the equation 
\beq \deriv[2]{\a_0}{t}+3\a_0\deriv{\a_0}{t}+\a_0^3-t\a_0 = \la+1.
\label{eq:a0ode}\eeq
Using Laplace's method it follows that as $t\to\infty$
\[\begin{split} \mu_0(t;\la) & 
= t^{(\la+1)/2}\int_0^\infty \xi^{\la}\exp\{t^{3/2}\xi(1-\tfrac13\xi^2)\}\,\d\xi \\ 
&={t}^{\la/2-1/4}\sqrt {\pi}\exp\left(\tfrac23\,{t}^{3/2}\right){\left[1+\frac{12\la^2-24\la+5}{48t^{3/2}}+\O(t^{-3})\right]} 
\end{split}\]
and so it is straightforward to show that as $t\to\infty$
\[ \a_0(t;\la)=\sqrt{t}+\O({t}^{-1}).\]
Suppose we seek an asymptotic expansion in the form
\[ \a_0(t;\la)=\sqrt{t}+\frac{a_{0,1}}{t}+\frac{a_{0,2}}{t^{5/2}}+\O(t^{-4})\]
where $a_{0,1}$ and $a_{0,2}$ are constants to be determined.
Substituting this into \eqref{eq:a0ode} and equating coefficients of powers of $t$ gives
\[ a_{0,1}=\tfrac12\la-\tfrac14,\qquad a_{0,2} = -\tfrac38\la^2+\tfrac34\la-\tfrac{5}{32}\]
and so 
\[ \a_0(t;\la)=\sqrt{t}+\frac{2\la-1}{4t}-\frac{12\la^2-24\la+5}{32t^{5/2}}+\O(t^{-4})\]
which is \eqref{asym:antp} with $n=0$.
From the Toda system \eqref{eq:toda2}, since $ \b_0=0$ then $\ds \b_1= \deriv{\a_0}{t}$,
and so
\[ \b_1(t;\la) = \frac{1}{2\sqrt{t}}-\frac{2\la-1}{4t^2} + \O(t^{-7/2})\]
which is \eqref{asym:bntp} with $n=1$.

Now we can use induction. Suppose that \eqref{asym:anbntp} are true, then using the Toda system \eqref{eq:toda2} we have
\begin{align*}\b_{n+1} &=\b_n+\deriv{\a_n}t = \frac{n+1}{2\sqrt{t}} + \frac{(n+1)(n+1-2\la)}{4t^2} + \O(t^{-7/2})\\
\a_{n+1} &= \a_n +\deriv{}t \ln\b_{n+1} = \sqrt{t}- \frac{2n-2\la+3}{4t} + \O(t^{-5/2}),
\end{align*}
which are \eqref{asym:anbntp} with $n\to n+1$, and hence the result is proved by induction.

The asymptotics \eqref{asym:anbntm} are proved in an analogous way. Using Watson's Lemma it follows that as $t\to-\infty$
\[ \mu_0(t;\la) = \frac{\Gamma(\la+1)}{(-t)^{\la+1}}\left[1+\frac{(\la+1)(\la+2)(\la+3)}{3t^3}+\O(t^{-6})\right]\]
and so as $t\to-\infty$
\[ \a_0(t;\la)=-\frac{\la+1}{t}+\O(t^{-4}).\]
In this case we seek an asymptotic expansion in the form
\[ \a_0(t;\la)=-\frac{\la+1}{t}+\frac{\widetilde{a}_{0,1}}{t^4}+\frac{\widetilde{a}_{0,2}}{t^{7}}+\O(t^{-10})\]
Substituting this into \eqref{eq:a0ode} and equating coefficients of powers of $t$ gives
\[ \widetilde{a}_{0,1}=-(\la+1)(\la+2)(\la+3),\qquad \widetilde{a}_{0,2}=-(\la+1)(\la+2)(\la+3)(3\la^2 + 21\la + 38)\]
and so 
\[ \a_0(t;\la)=-\frac{\la+1}{t}-\frac{(\la+1)(\la+2)(\la+3)}{t^4}+\O(t^{-7})\]
which is \eqref{asym:antm} with $n=0$.
From the Toda system \eqref{eq:toda2}, since $ \b_0=0$ then $\ds \b_1= \deriv{\a_0}{t}$,
and so
\[ \b_1(t;\la) =\frac{\la+1}{t^2}+\frac{4(\la+1)(\la+2)(\la+3)}{t^5}+\O(t^{-8})\]
which is \eqref{asym:bntp} with $n=1$. As for \eqref{asym:anbntp}, the results asymptotics \eqref{asym:anbntm} can be proved using induction and the Toda system \eqref{eq:toda2}.
\end{proof}

An alternative method of proving Lemma \ref{lem:anbn:asympt} is to use the differential system \eqref{anbn:diffsys} by seeking solutions in the form
\begin{align*}
\a_n(t;\la) &= \sqrt{t} +\frac{a_{n,1}}{t}+\O(t^{-5/2}),\qquad
\b_n(t;\la) = \frac{n}{2\sqrt{t}} + \frac{b_{n,1}}{t^2} +\O(t^{-7/2})
\end{align*}
as $t\to\infty$, where $a_{n,1}$ and $b_{n,1}$ are to be determined, and 
\begin{align*}
\a_n(t;\la) &= \frac{\tilde{a}_{n,0}}{t} + \frac{\tilde{a}_{n,1}}{t^4} + \O(t^{-7}),\qquad
\b_n(t;\la) = \frac{\tilde{b}_{n,0}}{t^2} + \frac{\tilde{b}_{n,1}}{t^5} + \O(t^{-8})
\end{align*}
as $t\to-\infty$, where $\tilde{a}_{n,0}$, $\tilde{a}_{n,1}$, $\tilde{b}_{n,0}$ and $\tilde{b}_{n,1}$ are to be determined. 

Plots of $\a_n(t;\la)$, and $\b_n(t;\la)$, for $n=1,2,\ldots,5$, with $\la=0,\tfrac12,2$ are given in Figure~\ref{fig:anbn}. 
\begin{figure}[ht]
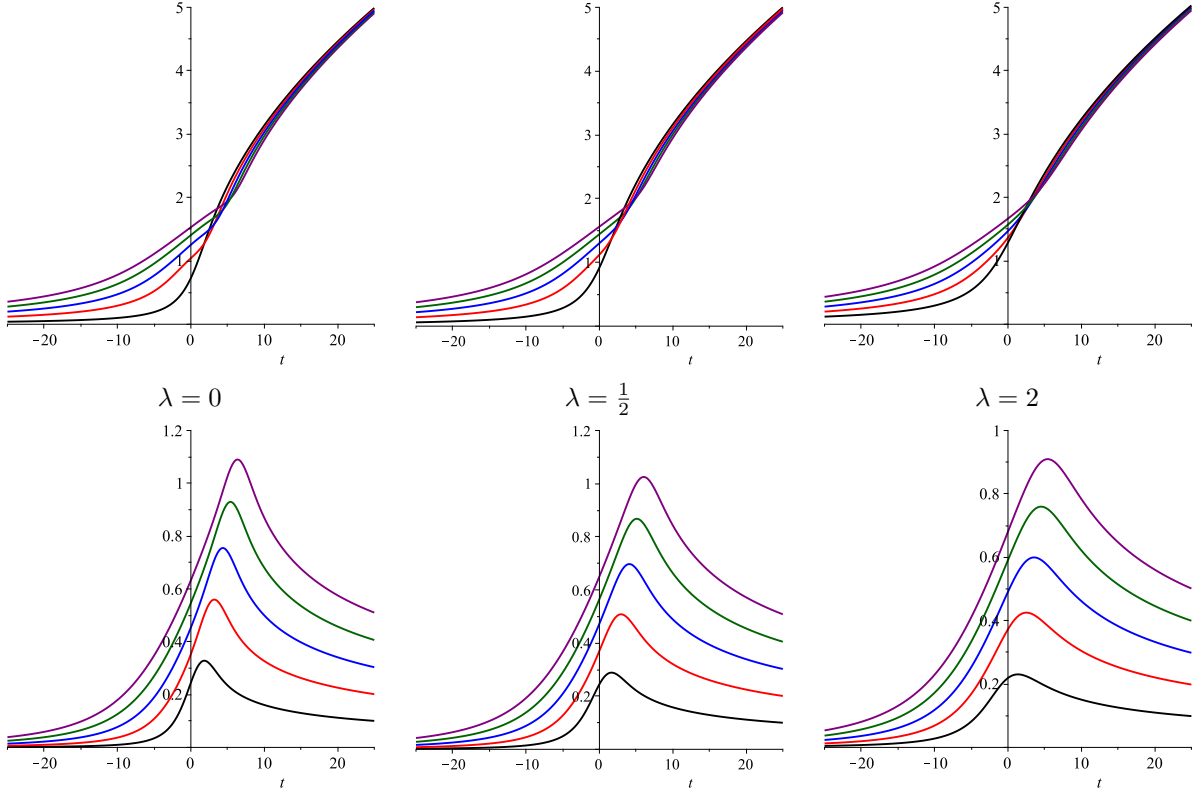
\[ \begin{array}{c@{\quad}c@{\quad}c}
\fig{genAiry_an_la0} & \fig{genAiry_an_la12} &\fig{genAiry_an_la2}\\
\la=0 & \la=\tfrac12 & \la=2\\
\fig{genAiry_bn_la0} &\fig{genAiry_bn_la12} & \fig{genAiry_bn_la2}\\
\end{array}\]
\caption{\label{fig:anbn}Plots of $\a_n(t;\la)$, upper row, and $\b_n(t;\la)$, lower row, for $n=1$ (black), $n=2$ (red), $n=3$ (blue), $n=4$ (green) and $n=5$ (purple), with $\la=0,\tfrac12,2$.}
\end{figure}

From these plots we make the following conjecture.

\begin{conjecture}{\rm
\begin{enumerate}\item[]
\item The recurrence coefficient $\a_{n}(t;\la)$ is a monotonically increasing function of $t$.
\item If $\la$ is fixed, then $\b_{n+1}(t;\la)>\b_{n}(t;\la)$, for all $t$.
\item The recurrence coefficient $\b_{n}(t;\la)$ has one maximum at $t=t^*_{n}$, with $t^*_{n+1}>t^*_{n}$, with $\la$ fixed.
\end{enumerate}
}\end{conjecture}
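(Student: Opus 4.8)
The plan is to work from the Toda system of Theorem~\ref{thm:todasys},
\[\deriv{\a_n}{t}=\b_{n+1}-\b_n,\qquad\deriv{\b_n}{t}=\b_n(\a_n-\a_{n-1}),\]
together with the positivity $\b_n(t;\la)>0$ and the boundary asymptotics of Lemma~\ref{lem:anbn:asympt}. Since $\deriv{\a_n}{t}=\b_{n+1}-\b_n$, it is enough to prove claim~(2), namely $\b_{n+1}(t;\la)>\b_n(t;\la)$ for all $t$ (equivalently, the strict log-convexity in $n$ of the norms $h_n(t;\la)$, since $\b_{n+1}/\b_n=h_{n+1}h_{n-1}/h_n^2$), which then gives (1) with $\a_n$ strictly increasing. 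Since $\deriv{\b_n}{t}=\b_n(\a_n-\a_{n-1})$ and $\b_n>0$, the stationary points of $t\mapsto\b_n(t;\la)$ are exactly the zeros of $\ph_n(t):=\a_n(t;\la)-\a_{n-1}(t;\la)$, so (3) asks that $\ph_n$ have a single zero $t^*_n$, at which it changes from $+$ to $-$, and that $t^*_{n+1}>t^*_n$. By Lemma~\ref{lem:anbn:asympt}, as $t\to+\infty$ we have $\b_{n+1}-\b_n\sim\tfrac12 t^{-1/2}>0$ and $\ph_n\sim-\tfrac12 t^{-1}<0$, and as $t\to-\infty$ we have $\b_{n+1}-\b_n\sim(2n+\la+1)t^{-2}>0$ and $\ph_n\sim-2t^{-1}>0$, while $\b_n\to0^+$ at both ends. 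Since $\a_n$ and $\b_n$ are real-analytic in $t$ — being built from the Hankel determinants $\Delta_n(t;\la)$, which by Theorem~\ref{thm:2.1} are Wronskians in $t$ of the entire function $\mu_0(t;\la)$ and are positive on $\R$ — the functions $g_n:=\b_{n+1}-\b_n$ and $\ph_n$ have isolated zeros, $g_n$ is positive for $|t|$ large, $\ph_n$ is positive near $-\infty$ and negative near $+\infty$, and in particular $\b_n$ has at least one local maximum.

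The main device is to differentiate $g_n$ and $\ph_n$ along their zeros using the Toda system, which gives the identities
\[\deriv{g_n}{t}=\b_{n+1}(\a_{n+1}-\a_n)-\b_n(\a_n-\a_{n-1}),\qquad\deriv{\ph_n}{t}=\b_{n+1}+\b_{n-1}-2\b_n.\]
At a zero $t_0$ of $g_n$ we have $\b_{n+1}(t_0)=\b_n(t_0)$, so $\deriv{g_n}{t}(t_0)=\b_n(t_0)\bigl(\a_{n+1}+\a_{n-1}-2\a_n\bigr)(t_0)$; therefore (2) follows once one knows that $\a_{n+1}+\a_{n-1}-2\a_n<0$ at every point where $\b_{n+1}=\b_n$, because then each zero of $g_n$ is a simple zero at which $g_n$ strictly decreases, which is incompatible with $g_n$ being positive at both $\pm\infty$ unless $g_n$ has no zeros at all, i.e.\ $g_n>0$ everywhere. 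Similarly, (3) follows once one knows that $\b_{n+1}+\b_{n-1}-2\b_n<0$ at every zero of $\ph_n$ — this forces a unique $+\to-$ crossing $t^*_n$, hence a unique maximum of $\b_n$ — together with $(\a_{n+1}+\a_{n-1}-2\a_n)(t^*_n)>0$; the latter is precisely the ordering $t^*_{n+1}>t^*_n$, since $\ph_n(t^*_n)=0$ gives $\ph_{n+1}(t^*_n)=(\a_{n+1}+\a_{n-1}-2\a_n)(t^*_n)$ and hence $\deriv{\b_{n+1}}{t}(t^*_n)=\b_{n+1}(t^*_n)(\a_{n+1}+\a_{n-1}-2\a_n)(t^*_n)>0$, so $\b_{n+1}$ is still increasing at $t^*_n$ and its maximum lies to the right.

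This reduces the conjecture to sign information on the discrete second differences $\a_{n+1}+\a_{n-1}-2\a_n$ and $\b_{n+1}+\b_{n-1}-2\b_n$ — the concavity in $n$ of the recurrence coefficients — at the relevant values of $t$, and I expect this to be the main obstacle. It is genuinely delicate: the expansions \eqref{nasym:anbn} show both second differences are negative for fixed $t$ and large $n$, which is the sign required by (2), but since $t^*_n\to\infty$ as $n\to\infty$ the sign needed at $t^*_n$ for (3) lives in the coupled regime $n,t\to\infty$, where the leading terms of \eqref{asym:anbntp} cancel and the outcome is decided by coefficients not computed in Lemma~\ref{lem:anbn:asympt}. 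The routes I would try are: an induction on $n$ obtained by differencing the discrete system \eqref{anbn:dissys} in $n$, aiming at a closed relation between the two second differences and manifestly positive quantities; a direct proof of (2) as strict log-convexity of $h_n$ in $n$ via the Christoffel formula underlying Lemma~\ref{mreca}; and a uniform Riemann--Hilbert analysis valid as $n,|t|\to\infty$ simultaneously, using the convexity of $\v(x)=\tfrac13x^3-tx-\la\ln x$ on $\R^+$, which would in addition locate the maxima $t^*_n$ and settle their ordering.
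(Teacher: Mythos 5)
What you are asked to prove is stated in the paper only as a conjecture, put forward on the strength of the numerical plots in Figure~\ref{fig:anbn}; the paper offers no proof, and your proposal does not supply one either. The part of your argument that is solid is the Toda reduction: the identities $\deriv{\a_n}{t}=\b_{n+1}-\b_n$ and $\deriv{\b_n}{t}=\b_n(\a_n-\a_{n-1})$ from Theorem~\ref{thm:todasys} do show that claim (1) is equivalent to claim (2), that the stationary points of $\b_n$ are the zeros of $\a_n-\a_{n-1}$, and your differentiated identities at those zeros are computed correctly; the boundary signs you quote are consistent with Lemma~\ref{lem:anbn:asympt}, and real-analyticity of $\a_n,\b_n$ via the positive Hankel determinants is fine. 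But the entire argument then hinges on three unproved sign statements about discrete second differences: that $\a_{n+1}+\a_{n-1}-2\a_n<0$ wherever $\b_{n+1}=\b_n$, that $\b_{n+1}+\b_{n-1}-2\b_n<0$ wherever $\a_n=\a_{n-1}$, and that $\a_{n+1}+\a_{n-1}-2\a_n>0$ at $t=t^*_n$. These are global concavity-in-$n$ assertions of exactly the same nature and difficulty as the conjecture itself, and you acknowledge that you do not know how to establish them; so the proposal is a (correct) reduction plus a research plan, not a proof.

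Two further points about why the tools cited cannot close the gap as used. First, the expansions in Lemma~\ref{lem:anbn:asympt} and \eqref{nasym:anbn} are explicitly \emph{formal} asymptotic expansions; even the boundary positivity of $\b_{n+1}-\b_n$ and the sign of $\a_n-\a_{n-1}$ as $t\to\pm\infty$, which your zero-counting argument needs, would require rigorous error control (the leading terms in $t\to+\infty$ cancel between consecutive $n$, so the sign is decided by the first correction, which must be shown to dominate). Second, as you note, $t^*_n\to\infty$ with $n$, so the sign needed at $t^*_n$ for claim (3) lives in a coupled regime $n,t\to\infty$ that neither the fixed-$t$, large-$n$ expansion \eqref{nasym:anbn} nor the fixed-$n$, large-$t$ expansion \eqref{asym:anbntp} controls; the discrete system \eqref{anbn:dissys} and differential system \eqref{anbn:diffsys} relate neighbouring indices but do not by themselves yield a fixed sign for the second differences for all $t$. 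Until one of your suggested routes (differencing \eqref{anbn:dissys} in $n$, log-convexity of the norms via the Christoffel-type formula behind Lemma~\ref{mreca}, or a uniform Riemann--Hilbert analysis) is actually carried out, the statement remains what the paper says it is: a conjecture.
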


\begin{remark}{Wang \etal\ \cite{refWZC20b} claim that the recurrence coefficients $\a_n$ and $\b_n$ for the generalised Airy weight satisfy the differential system
\beq \label{WZCeq35}
\deriv{\a_n}{t} = t-\a_n^2-2\b_n,\qquad \deriv{\b_n}{t}=2\a_n\b_n-n-\tfrac12\la
\eeq
and the discrete system
\[ (\a_n+\a_{n-1})\b_n=n+\tfrac12\la,\qquad \b_n+\b_{n+1}+\a_n^2=t \]
see equations (34), (35), (37) and (38) in \cite{refWZC20b}.
Theorems \ref{thm:anbn1} and \ref{thm:anbn2} above show that their claim is not correct.
Wang \etal\ \cite{refWZC20b} misquote the results of Magnus \cite{refMagnus95}, who considered the weight 
\[\w(x;t)=\exp\left(-\tfrac13x^3+tx\right),\qquad x\in\mathcal{C}\] where $\mathcal{C}$ is a contour in the complex plane, as being for $x\in[0,\infty)$; see equation (2) in \cite{refWZC20b}.
There are other reasons to illustrate that some results in \cite{refWZC20b} are not correct.
Eliminating $\b_n$ in \eqref{WZCeq35} gives 
\[\deriv[2]{\a_n}{t}=2\a_n^3-2t\a_n+2n+\la+1\]
which is equivalent to the second \p\ equation (\PII)\ 
\beq\label{eq:PII} \deriv[2]{q}{z}=2q^3+zq+A\eeq
with $A=n+\tfrac12(\la+1)$. 
In \cite{refWZC20b}, $\a_n$ is expressed in terms of the Hankel determinants, therefore giving special function solutions of \PII\ for \textit{all} positive values of the parameter $A$, which is not true. It is well-known that there are special function solutions of \PII\ \eqref{eq:PII} if and only if $A=n+\tfrac12$, for $n\in\Z$ \cite{refGambier09}.
\comment{Eliminating $\a_n$ gives 
\[\deriv[2]{\b_n}{t}= \frac{1}{2\b_n}\left(\deriv{\b_n}{t}\right)^2 - 4\b_n^2+2t\b_n-\frac{(2n+\la)^2}{8\b_n}\]
which is equivalent to {$\text{P}_{\!34}$}
\[\deriv[2]{p}{z}= \frac{1}{2p}\left(\deriv{p}{z}\right)^2 - 2p^2-zp-\frac{(2n+\la)^2}{2p}.\]}
}\end{remark}

\section{Generalised sextic Freud polynomials}\label{sec:gen6freud}

The generalised sextic Freud weight
\beq\label{genFreud6} \w(x;t,\la)=|x|^{2\la+1}\exp\left(-x^6+tx^2\right),\qquad x\in\R\eeq
with $t\in\R$ and $\la>-1$ parameters,
is a symmetric weight, i.e.\ $\w(x;t,\la)=\w(-x;t,\la)$, so that $\a_n \equiv0$. The generalised sextic Freud weight and recurrence coefficients associated with the weight were discussed in \cite{refCJ20}. 

Next we consider some properties of the polynomials associated with the generalised sextic Freud weight (\ref{genFreud6}). Generalised sextic Freud polynomials arise from a symmetrisation of generalised Airy polynomials. Since the weight is symmetric, the monic orthogonal polynomials $S_n(x;t,\la)$, $n\in\N$, therefore satisfy the three-term recurrence relation
\beq\label{eq:3rr}
S_{n+1}(x;t,\la)=xS_n(x;t,\la)-\b_n(t;\la)S_{n-1}(x;t,\la),\qquad n=0,1,2,\ldots\ \eeq
with $S_{-1}(x;t,\la)=0$ and $S_0(x;t,\la)=1$.
\subsection{Differential and discrete equations satisfied by generalised sextic Freud polynomials} \label{gen6feq}
In this section we derive mixed recurrence relations, differential-difference equations and differential equations satisfied by generalised sextic Freud polynomials which are analogous to those for the generalised Airy polynomials in \S\ref{sec:genairyprop}.

\comment{ \begin{theorem}\label{Thm:ABn}
Let \beq \label{gft}\ww{x}=|x-k|^\rho \exp\{-\v(x)\},\qquad x,\,t,\,k\in\R\eeq where $\v(x)$ is a continuously differentiable function on $\R$. Assume that the polynomials $\{P_n(x)\}_{n=0}^{\infty}$ satisfy the orthogonality relation
\[\imp P_n(x)P_m(x)\,\ww{x}\,\dx=h_n\delta_{mn}.\]
Then, for ${\rho\geq1}$, $P_{n}(x)$ satisfy the differential-difference equation
\beq \nonumber \label{dde}
(x-k)\,\deriv{P_n}{x}(x)=A_n(x)P_{n-1}(x)-\mathcal{B}_n(x)P_n(x)
\eeq 
where
\begin{align*}
A_n(x)&=\frac{x-k}{h_{n-1}}\imp P_n^2(y)\,\Kxy\,\w(y)\,\dy+a_n(x)\\
\mathcal{B}_n(x)&= \frac{x-k}{h_{n-1}}\imp P_n(y)P_{n-1}(y)\,\Kxy\,\w(y)\,\dy+b_n(x)\label{Bn}
\end{align*} 
with
\beq \nonumber \Kxy=\frac{\v'(x)-\v'(y)}{x-y}\label{def:Kxy}\eeq 
and
\begin{align*} a_n(x)&=\frac{\rho}{h_{n-1}}\imp \frac{P_n^2(y)}{y-k}\,\w(y)\,\dy,\qquad
b_n(x)=\frac{\rho}{h_{n-1}} \imp \frac{P_n(y)P_{n-1}(y)}{y-k}\,\w(y)\,\dy.\end{align*}
\end{theorem}

\begin{proof} See \cite[Theorem 2]{refCJK}.
\end{proof}

\begin{lemma}\label{lemmaeven}
Consider the weight defined by \eqref{gft} and assume that $\v(x)$ is an even, continuously differentiable function on $\R$. Assume that the polynomials $\{P_n(x)\}_{n=0}^{\infty}$ satisfy the orthogonality relation
\[\imp P_n(x)P_m(x)\,\ww{x}\,\dx=h_n\delta_{mn}\]
and the three-term recurrence relation
\[\label{3trr}
P_{n+1}(x)=xP_{n}(x)-\b_n(t)P_{n-1}(x)
\]
with $P_0=1$ and $P_1=x$. Then the polynomials $P_n(x)$ satisfy
\begin{align*} \imp \frac{P_n^2(y)}{y-k}\,\w(y)\,\dy&=0,\qquad
\int_{-\infty}^\infty \frac{P_n(y)P_{n-1}(y)}{y-k}\,\w(y)\,\dy= \tfrac12[1-(-1)^n]\,h_{n-1},
\label{int22}
\end{align*} where $n\in\N$ and
\[ h_n = \int_{-\infty}^\infty {P_n^2(y)\,\ww{y}}\,\dy.\]\end{lemma}

\begin{proof} See \cite[Lemma 1]{refCJK}. 

\subsubsection{The differential-difference equation satisfied by generalised sextic Freud polynomials}
\begin{lemma}\label{cor:ABn}
Let \[\ww{x}=|x|^\rho \exp\{-\v(x)\},\qquad x,\,t,\,k\in\R\] where $\v(x)$ is an even, continuously differentiable function on $\R$. Assume that the polynomials $\{P_n(x)\}_{n=0}^{\infty}$ satisfy the orthogonality relation
\[\imp P_n(x)P_m(x)\,\ww{x}\,\dx=h_n\delta_{mn}.\]
Then, for ${\rho\geq1}$, $P_{n}(x)$ satisfy the differential-difference equation
\[
x\,\deriv{P_n}{x}(x)=\mathcal{A}_n(x)P_{n-1}(x)-\mathcal{B}_n(x)P_n(x)
\]
where
\begin{align*}
\mathcal{A}_n(x)&=\frac{x}{h_{n-1}}\imp P_n^2(y)\,\Kxy\,\w(y)\,\dy\\
\mathcal{B}_n(x)&= \frac{x}{h_{n-1}}\imp P_n(y)P_{n-1}(y)\,\Kxy\,\w(y)\,\dy+\tfrac12{\rho}[1-(-1)^n].
\end{align*}
\end{lemma}
\begin{proof} 
See \cite[Corollary 1]{refCJK}.
\end{proof} 

\begin{lemma}{\label{lem31}For the generalised sextic Freud weight \eqref{genFreud6}
the monic orthogonal polynomials $P_{n}(x)$ with respect to $\ww{x}$ satisfy
\begin{subequations}
\begin{align}
\imp\Kxy&{P_n^2(y)}\,\w(y)\,\dy \nonumber\\ 
&=6\big[x^4-\tfrac13t +x^2\big(\b_n+\b_{n+1}\big)+\b_{n+2}\b_{n+1} 
+\big(\b_{n+1}+\b_{n}\big)^2+\b_{n-1}\b_{n}\big]h_n, \label{int1a}\\
\imp\Kxy& {P_n(y)P_{n-1}(y)}\,\w(y)\,\dy = 6x\big(x^2+\b_{n+1}+\b_n+\b_{n-1}\big)h_n\label{int1b}
\end{align}
\end{subequations} 
where 
\[\Kxy=\frac{\vx-\vy}{x-y}\] with
$v(x)=x^6-tx^2$ and
\beq \nonumber \label{def:hn} h_n = \imp {P_n^2(y)\,\ww{y}}\,\dy.\eeq }\end{lemma}

\begin{proof}Since
$v(x)=x^6-tx^2$, we have
\[\Kxy= 6(x^4+x^3y+x^2y^2+xy^3+y^4)-2t.\]
Hence for \eqref{int1a}
\begin{align*} \imp &\Kxy {P_n^2(y)}\,\w(y)\,\dy \\
&=
(6x^4-2t)\imp {P_n^2(y)}\,\w(y)\,\dy 
+ 6x^3\imp {yP_n^2(y)}\,\w(y)\,\dy + 6x^2\imp {y^2P_n^2(y)}\,\w(y)\,\dy\\
&\qquad + 6x\imp {y^3P_n^2(y)}\,\w(y)\,\dy + 6\imp {y^4P_n^2(y)}\,\w(y)\,\dy\\
&= (6x^4-2t)h_n +6x^2\imp \big[P_{n+1}(y) + \b_nP_{n-1}(y)\big]^2\w(y)\,\dy\\ &\qquad
+ 6\imp \big[P_{n+2}(y) + (\b_{n+1}+\b_n)P_{n}(y)+\b_n\b_{n-1}P_{n-2}(y)\big]^2\w(y)\,\dy\\
&= (6x^4-2t)h_n + 6x^2(h_{n+1}+ \b_n^2h_{n-1})+6(h_{n+2} + (\b_{n+1}+\b_n)^2h_n+\b_n^2\b_{n-1}^2h_{n-2})\\
&= 6\big[x^4-\tfrac13t +x^2\big(\b_n+\b_{n+1}\big)+\b_{n+2}\b_{n+1}+\big(\b_{n+1}+\b_{n}\big)^2+\b_{n-1}\b_{n}\big]h_n,
\end{align*}
as required, since 
\[\imp {y P_n^2(y)}\,\w(y)\,\dy=\imp {y^3P_n^2(y)}\,\w(y)\,\dy=0\]
as these have odd integrands,
$\b_n=h_n/h_{n-1}$, the monic orthogonal polynomials $P_{n}(x)$ satisfy the three-term recurrence relation \eqref{eq:3rr},
and are orthogonal, i.e.
\beq \imp {P_m(y)}P_{n}(y)\,\w(y)\,\dy=0,\qquad{\rm if}\quad m\neq n.\label{Pnorth}\eeq 
Also for \eqref{int1b}
\begin{align*} \imp& \Kxy {P_n(y)P_{n-1}(y)}\,\w(y)\,\dy\nonumber\\ &=
(6x^4-2t) \imp {P_n(y)P_{n-1}(y)}\,\w(y)\,\dy 
+ 6x^3 \imp {yP_n(y)P_{n-1}(y)}\,\w(y)\,\dy \\ &\qquad\quad
+ 6x^2 \imp {y^2P_n(y)P_{n-1}(y)}\,\w(y)\,\dy + 6x \imp {y^3P_n(y)P_{n-1}(y)}\,\w(y)\,\dy \\&\qquad\quad
+ 6 \imp {y^4P_n(y)P_{n-1}(y)}\,\w(y)\,\dy \\
&= 6x^3\imp P_{n}(y)\big[P_{n}(y)+\b_{n-1}P_{n-2}(y)\big] \w(y)\,\dy \\
&\qquad\quad +6x \imp
\big[P_{n+2}(y)+\big(\b_{n+1}+\b_{n}\big)P_n(y)+\b_n\b_{n-1}P_{n-2}(y)\big] 
\big[P_{n}(y)+\b_{n-1}P_{n-2}(y)\big] \w(y)\,\dy\\
&=6x^3h_n+6x\big[\big(\b_{n+1}+\b_{n}\big)h_n+\b_n\b_{n-1}^2h_{n-2}\big]\\
&=6x\big(x^2+\b_{n+1}+\b_n+\b_{n-1}\big)h_n,
\end{align*}
as required, since
\[\begin{split}\imp {P_n(y)P_{n-1}(y)}\,\w(y)\,\dy=\imp y^2 {P_n(y)P_{n-1}(y)}\,\w(y)\,\dy=
\imp y^4 {P_n(y)P_{n-1}(y)}\,\w(y)\,\dy&=0\end{split}\]
as these have odd integrands, using the recurrence relation \eqref{eq:3rr} and orthogonality \eqref{Pnorth}.\end{proof}}

\begin{theorem}{For the generalised sextic Freud weight \eqref{genFreud6}
the monic orthogonal polynomials $S_{n}(x;t,\la)$ with respect to this weight satisfy the differential-difference equation
\beq \label{eq:Snddeq}
x\deriv{S_n}{x}(x;t,\la)=\mathcal{A}_n(x)S_{n-1}(x;t,\la)-\mathcal{B}_n(x)S_n(x;t,\la)
\eeq 
where
\begin{subequations}\label{AnBn2}\begin{align}
\mathcal{A}_n(x)&=6x\b_n\big[x^4-\tfrac13t +x^2\big(\b_n+\b_{n+1}\big)+\b_{n+2}\b_{n+1}+\big(\b_{n+1}+\b_{n}\big)^2+\b_{n-1}\b_{n}\big]\\
\mathcal{B}_n(x)&=6x^2\b_n\big(x^2+\b_{n+1}+\b_n+\b_{n-1}\big)+(\la+\tfrac12)[1-(-1)^n],
\end{align}\end{subequations}
with $\b_n$ the recurrence coefficient in the three-term recurrence relation \eqref{eq:3rr}.}\end{theorem}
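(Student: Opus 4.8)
The plan is to use the ladder-operator formalism for weights with a zero at the origin, exactly as set up for the generalised Airy case, together with the two integral identities that were established (in the commented-out \texttt{Lemma \ref{lem31}}) for the generalised sextic Freud weight. Since $v(x)=x^6-tx^2$, one has $v'(x)=6x^5-2tx$, so the divided difference
\[
\mathcal{K}(x,y)=\frac{v'(x)-v'(y)}{x-y}=6\big(x^4+x^3y+x^2y^2+xy^3+y^4\big)-2t
\]
is a polynomial in $x$ and $y$. First I would recall Lemma~\ref{cor:ABn}, which for an even weight $|x|^{\rho}\e^{-v(x)}$ with $\rho=2\la+1\ge1$ gives the differential-difference equation
\[
x\deriv{S_n}{x}(x)=\mathcal{A}_n(x)S_{n-1}(x)-\mathcal{B}_n(x)S_n(x),
\]
with
\[
\mathcal{A}_n(x)=\frac{x}{h_{n-1}}\imp S_n^2(y)\,\mathcal{K}(x,y)\,\w(y)\,\dy,\qquad
\mathcal{B}_n(x)=\frac{x}{h_{n-1}}\imp S_n(y)S_{n-1}(y)\,\mathcal{K}(x,y)\,\w(y)\,\dy+\tfrac12\rho[1-(-1)^n].
\]
Substituting $\rho=2\la+1$ immediately produces the constant term $(\la+\tfrac12)[1-(-1)^n]$ appearing in $\mathcal{B}_n$.

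The substance of the proof is then to evaluate the two integrals $\imp \mathcal{K}(x,y)S_n^2(y)\,\w(y)\,\dy$ and $\imp \mathcal{K}(x,y)S_n(y)S_{n-1}(y)\,\w(y)\,\dy$. I would expand $\mathcal{K}(x,y)$ as above and treat each monomial $y^k$, $k=0,1,2,3,4$, separately. The odd powers contribute nothing: $\imp y^{2j+1}S_n^2(y)\w(y)\,\dy=0$ and $\imp y^{2j}S_n(y)S_{n-1}(y)\w(y)\,\dy=0$ for $j=0,1,2$, because the integrands are odd (the weight is even and $S_n$ has parity $(-1)^n$). For the surviving even powers I would iterate the symmetric three-term recurrence \eqref{eq:3rr} to write $y^2S_n=S_{n+2}+(\b_{n+1}+\b_n)S_n+\b_n\b_{n-1}S_{n-2}$ and likewise $y^2S_{n-1}$, expand the squares, and use orthogonality together with $h_k=\b_k h_{k-1}$ to collapse everything onto $h_n$. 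This is the computation carried out in the proof of the (commented) Lemma~\ref{lem31}; it yields
\[
\imp\mathcal{K}(x,y)S_n^2(y)\,\w(y)\,\dy=6\big[x^4-\tfrac13 t+x^2(\b_n+\b_{n+1})+\b_{n+2}\b_{n+1}+(\b_{n+1}+\b_n)^2+\b_{n-1}\b_n\big]h_n
\]
and
\[
\imp\mathcal{K}(x,y)S_n(y)S_{n-1}(y)\,\w(y)\,\dy=6x\big(x^2+\b_{n+1}+\b_n+\b_{n-1}\big)h_n.
\]
Multiplying the first by $x/h_{n-1}$ and using $h_n/h_{n-1}=\b_n$ gives $\mathcal{A}_n(x)$ in \eqref{AnBn2}; multiplying the second by $x/h_{n-1}$ and adding the constant $(\la+\tfrac12)[1-(-1)^n]$ gives $\mathcal{B}_n(x)$.

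I expect the only delicate points to be bookkeeping ones: keeping the parities straight so that exactly the right integrals vanish, and carrying the indices $\b_{n-1},\b_n,\b_{n+1},\b_{n+2}$ correctly through the two levels of recurrence needed to reach $y^4S_n^2$. There is no analytic obstacle — the boundary terms in the integration by parts underlying Lemma~\ref{cor:ABn} vanish because $\rho=2\la+1\ge1$ forces $x|x|^{\rho}S_n(x)S_{n-1}(x)\w_0(x)\to0$ at $x=0$ and the Gaussian-type decay $\e^{-x^6}$ kills the contributions at $\pm\infty$. Hence the result follows directly by assembling Lemma~\ref{cor:ABn} with the two evaluated integrals.
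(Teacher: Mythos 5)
Your proposal is correct and follows essentially the same route as the paper: it invokes the ladder-operator result of \cite[Corollary 1]{refCJK} for weights $|x|^{2\la+1}\e^{-\v(x)}$ and then evaluates the two integrals of $\mathcal{K}(x,y)$ against $S_n^2$ and $S_nS_{n-1}$ by expanding $\mathcal{K}$, discarding the odd-parity terms, iterating the symmetric recurrence and using orthogonality with $\b_n=h_n/h_{n-1}$, exactly as in the paper's proof. The resulting expressions for $\mathcal{A}_n$ and $\mathcal{B}_n$ agree with \eqref{AnBn2}, so there is nothing to add.
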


\begin{proof}It was proved in \cite[Corollary 1]{refCJK} that monic orthogonal polynomials $S_n(x)=S_{n}(x;t,\la)$ with respect to the weight 
\[\w(x)=|x|^{2\la+1}\exp\{-v(x)\}\]
satisfy the differential-difference equation \eqref{eq:Snddeq}, where
\begin{align*}
\mathcal{A}_n(x)&=\frac{x}{h_{n-1}}\imp\Kxy {S_n^2(y)}\,\w(y)\,\dy\\
\mathcal{B}_n(x)&=\frac{x}{h_{n-1}}\imp\Kxy {S_n(y)S_{n-1}(y)} \,\w(y)\,\dy 
+(\la+\tfrac12)[1-(-1)^n]
\end{align*}
and \[\Kxy=\frac{\vx-\vy}{x-y}.\] For the generalised sextic Freud weight \eqref{genFreud6} we have
$v(x)=x^6-tx^2$ and hence
\[\Kxy= 6(x^4+x^3y+x^2y^2+xy^3+y^4)-2t.\]
Hence 
\begin{align*} \imp &\Kxy {S_n^2(y)}\,\w(y)\,\dy \\
&=
(6x^4-2t)\imp {S_n^2(y)}\,\w(y)\,\dy 
+ 6x^3\imp {yS_n^2(y)}\,\w(y)\,\dy + 6x^2\imp {y^2S_n^2(y)}\,\w(y)\,\dy\\
&\qquad + 6x\imp {y^3S_n^2(y)}\,\w(y)\,\dy + 6\imp {y^4S_n^2(y)}\,\w(y)\,\dy\\
&= (6x^4-2t)h_n +6x^2\imp \big[S_{n+1}(y) + \b_nS_{n-1}(y)\big]^2\w(y)\,\dy\\ &\qquad
+ 6\imp \big[S_{n+2}(y) + (\b_{n+1}+\b_n)S_n(y)+\b_n\b_{n-1}S_{n-2}(y)\big]^2\w(y)\,\dy\\
&= (6x^4-2t)h_n + 6x^2(h_{n+1}+ \b_n^2h_{n-1})+6(h_{n+2} + (\b_{n+1}+\b_n)^2h_n+\b_n^2\b_{n-1}^2h_{n-2})\\
&= 6\big[x^4-\tfrac13t +x^2\big(\b_n+\b_{n+1}\big)+\b_{n+2}\b_{n+1}+\big(\b_{n+1}+\b_{n}\big)^2+\b_{n-1}\b_{n}\big]h_n,
\end{align*}
since 
\[\imp {y S_n^2(y)}\,\w(y)\,\dy=\imp {y^3S_n^2(y)}\,\w(y)\,\dy=0\]
as these have odd integrands,
$\b_n=h_n/h_{n-1}$ where 
\beq \nonumber \label{def:hn} h_n = \imp S_n^2(y)\,\w(y)\,\dy\eeq 
the monic orthogonal polynomials $S_{n}(x)$ satisfy the three-term recurrence relation \eqref{eq:3rr},
and are orthogonal, i.e.
\beq \imp {S_m(y)}S_n(y)\,\w(y)\,\dy=0,\qquad{\rm if}\quad m\neq n.\label{Pnorth}\eeq 
Also,
\begin{align*} \imp& \Kxy {S_n(y)S_{n-1}(y)}\,\w(y)\,\dy\nonumber\\ &=
(6x^4-2t) \imp {S_n(y)S_{n-1}(y)}\,\w(y)\,\dy 
+ 6x^3 \imp {yS_n(y)S_{n-1}(y)}\,\w(y)\,\dy \\ &\qquad\quad
+ 6x^2 \imp {y^2S_n(y)S_{n-1}(y)}\,\w(y)\,\dy + 6x \imp {y^3S_n(y)S_{n-1}(y)}\,\w(y)\,\dy \\&\qquad\quad
+ 6 \imp {y^4S_n(y)S_{n-1}(y)}\,\w(y)\,\dy \\
&= 6x^3\imp S_n(y)\big[S_n(y)+\b_{n-1}S_{n-2}(y)\big]\w(y)\,\dy \\
&\qquad\quad +6x \imp
\big[S_{n+2}(y)+\big(\b_{n+1}+\b_{n}\big)S_n(y)+\b_n\b_{n-1}S_{n-2}(y)\big] 
\big[S_n(y)+\b_{n-1}S_{n-2}(y)\big]\w(y)\,\dy\\
&=6x^3h_n+6x\big[\big(\b_{n+1}+\b_{n}\big)h_n+\b_n\b_{n-1}^2h_{n-2}\big]\\
&=6x\big(x^2+\b_{n+1}+\b_n+\b_{n-1}\big)h_n,
\end{align*}
since
\[\begin{split}\imp {S_n(y)S_{n-1}(y)}\,\w(y)\,\dy=\imp y^2 {S_n(y)S_{n-1}(y)}\,\w(y)\,\dy=
\imp y^4 {S_n(y)S_{n-1}(y)}\,\w(y)\,\dy&=0\end{split}\]
as these have odd integrands, using the recurrence relation \eqref{eq:3rr} and orthogonality \eqref{Pnorth}.\end{proof}

\begin{remark} In \cite{refWZC20a}, the ladder operator technique was used to obtain the coefficients ${A}_n$ and ${B}_n$ in \eqref{ddee} for the weight \eqref{genFreud6}. Note however that, in their notation, the expression for ${B}_n$ (cf.~\cite[eqn. (39)]{refWZC20a}) should be \[{B}_n(z)=6z^3\b_n +6z\b_n\big(\b_{n+1}+\b_n+\b_{n-1}\big)+\frac{\a[1-(-1)^n]}{2z}\]

\end{remark}

Now we derive a differential equation satisfied by generalised sextic Freud polynomials.
\begin{theorem} \label{thm:gende}For the generalised sextic Freud weight \eqref{genFreud6}
the monic orthogonal polynomials $S_{n}(x;t,\la)$ with respect to this weight satisfy 
\[
x\deriv[2]{S_n}{x}(x;t,\la)+Q_n(x)\deriv{S_n}{x}(x;t,\la)+T_n(x)S_n(x;t,\la)=0
\]
where
\begin{align*}
Q_n(x)=&~2 tx^2 -6 x^6+2 \la +1
-\frac{2 x^2 \left(2x^2+\b_n+\b_{n+1}\right)}{C_n(x)}\\
T_n(x)=&~36 x \b_{n} C_{n-1}(x)C_n(x)+12 x^3 \b_{n}+\frac{(2 \la +1)}{x} \left\{6 x^2 \b_{n} D_n(x)+( \la +\tfrac12) [1-(-1)^n-1]\right\}+12 x \b_{n} D_n(x)\\
&-\left\{6 x^2 \b_{n} D_n(x)+(\la +\tfrac12) [1-(-1)^n-1]\right\}\left\{6 x \b_{n} D_n(x)+\frac{(2 \la +1)}{2x} [1-(-1)^n-1]-2 t x+6 x^5\right\}\\&-\frac{\left\{C_n(x)+4x^4+2 x^2(\b_n+\b_{n+1})\right\} \left\{6 x^2 \b_{n} D_n(x)+( \la +\tfrac12) [1-(-1)^n-1]\right\}}{x C_n(x)},
\end{align*}
with 
\begin{align*}C_n(x)&=x^4-\tfrac13t +x^2\big(\b_n+\b_{n+1}\big)+\b_{n+2}\b_{n+1}+\big(\b_{n+1}+\b_{n}\big)^2+\b_{n-1}\b_{n}\\
D_n(x)&=x^2+\b_{n-1}+\b_{n}+\b_{n+1}.\end{align*}
\comment{\begin{align*}
\mathcal{A}_n(x)&=\frac{x}{h_{n-1}}\imp S_n^2(y;t,\la)\,\Kxy\,w(y;t,\la)\,\dy\\
\mathcal{B}_n(x)&= \frac{x}{h_{n-1}}\imp S_n(y;t,\la)S_{n-1}(y;t,\la)\,\Kxy\,\w(y;t,\la)\,\dy 
(\la+\tfrac12)[1-(-1)^n].
\end{align*}}
\end{theorem}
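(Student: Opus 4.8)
The plan is to follow the method used to prove the differential equation for the generalised Airy polynomials: differentiate the differential-difference equation \eqref{eq:Snddeq} and eliminate the lower-index polynomial using the two ladder relations for the weight \eqref{genFreud6}. First I would write $\v(x)=-\ln\w(x;t,\la)=x^6-tx^2-(2\la+1)\ln|x|$, so that $\v'(x)=6x^5-2tx-(2\la+1)/x$, and read off from \eqref{AnBn2} the functions
\[A_n(x)=\frac{\mathcal{A}_n(x)}{x\,\b_n}=6\,C_n(x),\qquad B_n(x)=\frac{\mathcal{B}_n(x)}{x}=6x\,\b_nD_n(x)+\frac{(\la+\tfrac12)\,[1-(-1)^n]}{x},\]
so that \eqref{eq:Snddeq} is the lowering relation $\deriv{S_n}{x}=\b_nA_n(x)S_{n-1}-B_n(x)S_n$. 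Its companion raising relation $\deriv{S_{n-1}}{x}=\bigl(B_n(x)+\v'(x)\bigr)S_{n-1}-A_{n-1}(x)S_n$, with $A_{n-1}(x)=6C_{n-1}(x)$, holds — exactly as in the generalised Airy case — as a consequence of the supplementary conditions of the form \eqref{eq1}--\eqref{eq3} satisfied by $A_n$ and $B_n$, cf.\ \cite{refCJK}. These are the ingredients of equations (3.2.13) and (3.2.14) of \cite{refIsmail}.

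Next I would differentiate the lowering relation, use the raising relation to remove $\deriv{S_{n-1}}{x}$, and then use the lowering relation once more to remove $S_{n-1}$; equivalently, and consistently with the Airy proof, I would substitute the above $A_n$, $B_n$ and $\v$ into equations (3.2.13)--(3.2.14) of \cite{refIsmail}, written for monic polynomials. Either way one obtains
\[\deriv[2]{S_n}{x}+\Bigl(-\v'(x)-\frac{A_n'(x)}{A_n(x)}\Bigr)\deriv{S_n}{x}+\Bigl(B_n'(x)-B_n(x)\frac{A_n'(x)}{A_n(x)}-B_n(x)\bigl[\v'(x)+B_n(x)\bigr]+\b_nA_{n-1}(x)A_n(x)\Bigr)S_n=0,\]
where $A_n'(x)=6C_n'(x)=12x\bigl(2x^2+\b_n+\b_{n+1}\bigr)$, $B_n'(x)=6\b_nD_n(x)+12x^2\b_n-(\la+\tfrac12)[1-(-1)^n]/x^2$ and $\b_nA_{n-1}(x)A_n(x)=36\,\b_nC_{n-1}(x)C_n(x)$. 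Since both $\v'$ and $B_n$ have a simple pole at $x=0$, the coefficients above are singular at the origin; clearing the common factor $x$ gives an equation of the stated shape, namely $x\deriv[2]{S_n}{x}+Q_n(x)\deriv{S_n}{x}+T_n(x)S_n=0$ with $Q_n=x\bigl(-\v'-A_n'/A_n\bigr)$ and $T_n=x\bigl(B_n'-B_nA_n'/A_n-B_n(\v'+B_n)+\b_nA_{n-1}A_n\bigr)$. The identities $-x\v'(x)=2\la+1-6x^6+2tx^2$ and $-xA_n'(x)/A_n(x)=-2x^2(2x^2+\b_n+\b_{n+1})/C_n(x)$ give the claimed $Q_n(x)$ at once, and expanding $T_n$ and grouping the terms that are regular at the origin, those carrying $1/x$, those carrying $1/C_n(x)$ and those carrying $1/(xC_n(x))$ — using $\b_nA_{n-1}A_n=36\b_nC_{n-1}C_n$, $xC_n'(x)=4x^4+2x^2(\b_n+\b_{n+1})$, and $6x^2\b_nD_n(x)+(\la+\tfrac12)[1-(-1)^n-1]=xB_n(x)-(\la+\tfrac12)$ — recovers the claimed $T_n(x)$.

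The hard part is entirely the bookkeeping in the identification of $T_n(x)$. The parity correction $(\la+\tfrac12)[1-(-1)^n]$ carried by $B_n$ feeds into $B_n'$, into $B_n\v'$ and into $B_n^2$, and the resulting pieces must be reorganised into the occurrences of $[1-(-1)^n-1]$ (that is, $-(-1)^n$) that appear in the final answer; one has to track carefully which contributions are regular at $x=0$ and which carry a factor $1/x$ or $1/C_n(x)$. No input beyond the two ladder relations and the three-term recurrence \eqref{eq:3rr} is needed, so the remaining computation, though long, is routine, and it can be cross-checked against the recurrence coefficients and moments for the weight \eqref{genFreud6} recorded in \cite{refCJ20}.
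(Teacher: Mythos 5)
Your proof is correct and takes essentially the same route as the paper: the paper simply quotes the ready-made coefficient formulas of \cite[Theorem 3]{refCJK} for weights $|x|^{\rho}\exp\{-\v(x)\}$ and substitutes $\mathcal{A}_n$, $\mathcal{B}_n$ from \eqref{AnBn2}, which is algebraically identical to your substitution of $A_n=\mathcal{A}_n/(x\b_n)$, $B_n=\mathcal{B}_n/x$ and the full potential $x^6-tx^2-(2\la+1)\ln|x|$ into equations (3.2.13)--(3.2.14) of \cite{refIsmail} followed by multiplication by $x$. The only point worth making explicit is that the ladder relations with the parity correction $(\la+\tfrac12)[1-(-1)^n]$, for a weight vanishing at an interior point of its support, are those of the Chen--Feigin/\cite{refCJK} adaptation (which you do cite), rather than the unmodified Chen--Ismail relations that suffice in the generalised Airy case where the singularity sits at an endpoint.
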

\begin{proof} In \cite[Theorem 3]{refCJK} it was proved that the coefficients in the differential equation 
\[x\deriv[2]{S_n}{x}(x)+Q_n(x)\deriv{S_n}{x}(x)+T_n(x)S_n(x)=0\]
satisfied by polynomials orthogonal with respect to the weight \[\ww{x}=|x|^{\rho}\exp\{-\v(x;t)\}\] are given by 
\begin{subequations}\label{coeff12}\begin{align}
Q_n(x)&=\rho+1-x\deriv{\v}{x}-\frac{x}{\mathcal{A}_n(x)}\,\deriv{\mathcal{A}_n}{x}\\[5pt]
T_n(x)&=\frac{\mathcal{A}_n(x)\mathcal{A}_{n-1}(x)}{x\b_{n-1}}+\deriv{\mathcal{B}_n}{x} 
-\mathcal{B}_n(x)\left[\deriv{\v}{x} +\frac{\mathcal{B}_n(x)-\rho}{x}\right]-\frac{\mathcal{B}_n(x)}{\mathcal{A}_n(x)}\deriv{\mathcal{A}_n}{x},
\end{align}
with
\begin{align*}
\mathcal{A}_n(x)&=\frac{x}{h_{n-1}}\imp S_n^2(y;t,\la)\,\Kxy\,\w(y;t,\la)\,\dy\\
\mathcal{B}_n(x)&= \frac{x}{h_{n-1}}\imp S_n(y;t,\la)S_{n-1}(y;t,\la)\,\Kxy\,\w(y;t,\la)\,\dy 
+\tfrac12{\rho}[1-(-1)^n].
\end{align*}\end{subequations}
For the generalised sextic Freud weight \eqref{genFreud6} we use \eqref{coeff12} with $k=0$, $\rho=2\la+1$ and $\v(x)=x^6-tx^2$ to obtain 
\begin{subequations}
\begin{align}\label{coef1}
Q_n(x)&=2\la+2-6x^6+2tx^2 -\frac{x}{\mathcal{A}_n(x)}\,\deriv{\mathcal{A}_n}{x}\\[5pt]
\label{coef2}T_n(x)&=\frac{\mathcal{A}_n(x)\mathcal{A}_{n-1}(x)}{x\b_{n-1}}+\deriv{\mathcal{B}_n}{x} 
-\mathcal{B}_n(x)\left[6x^5-2tx +\frac{\mathcal{B}_n(x)-(2\la+1) }{x}\right]-\frac{\mathcal{B}_n(x)}{\mathcal{A}_n(x)}\deriv{\mathcal{A}_n}{x}.
\end{align}\end{subequations}
Substituting the expressions for $\mathcal{A}_n(x)$ and $\mathcal{B}_n(x)$ given by \eqref{AnBn2} and their derivatives into \eqref{coef1} and \eqref{coef2}, we obtain the stated result on simplification.
\end{proof}
\begin{lemma}\label{mrec} Let $\{S_n(x;t,\la)\}_{n=0}^{\infty}$ be the sequence of monic generalised sextic Freud polynomials orthogonal with respect to the weight
\eqref{genFreud6}, then, for $n$ fixed, 
\begin{align}\label{l+2}
x^2S_n(x;t,\la+1)=xS_{n+1}(x;t,\la)-(\b_{n+1}+a_n)S_n(x;t,\la)
\end{align} where 
\beq \nonumber a_n=\begin{cases} \ds\frac{S_{n+2}(0;t,\la)}{S_n(0;t,\la)},\quad &\text{if}\quad n\quad\text{even}\\[5pt]
\ds\frac{S_{n+2}'(0;t,\la)}{S_n'(0;t,\la)},\quad &\text{if}\quad n\quad\text{odd}.\end{cases}\label{ed}\eeq
\end{lemma}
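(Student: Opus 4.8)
The plan is to exploit the fact that the generalised sextic Freud weights with consecutive values of the parameter are related by multiplication by $x^2$, i.e.\ $\w(x;t,\la+1)=x^2\,\w(x;t,\la)$, together with the parity $S_k(-x;t,\la)=(-1)^kS_k(x;t,\la)$ inherited from the symmetry of the weight \eqref{genFreud6}. Using the three-term recurrence \eqref{eq:3rr} in the form $S_{n+2}(x;t,\la)=xS_{n+1}(x;t,\la)-\b_{n+1}S_n(x;t,\la)$, the claim \eqref{l+2} is equivalent to
\[x^2S_n(x;t,\la+1)=S_{n+2}(x;t,\la)-a_nS_n(x;t,\la),\]
so it suffices to establish this identity and to identify the constant $a_n$.

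First I would note that both $x^2S_n(x;t,\la+1)$ and $S_{n+2}(x;t,\la)$ are monic of degree $n+2$ and have the same parity as $n$, so their difference $F(x):=x^2S_n(x;t,\la+1)-S_{n+2}(x;t,\la)$ is a polynomial of degree at most $n$ of parity $n$. Next, for each $k<n$ with $k\equiv n\pmod{2}$, using $x^2\w(x;t,\la)=\w(x;t,\la+1)$ and the orthogonality of $S_n(\cdot;t,\la+1)$ to all polynomials of lower degree,
\[\imp x^2S_n(x;t,\la+1)\,S_k(x;t,\la)\,\w(x;t,\la)\,\dx=\imp S_n(x;t,\la+1)\,S_k(x;t,\la)\,\w(x;t,\la+1)\,\dx=0,\]
while $\imp S_{n+2}(x;t,\la)\,S_k(x;t,\la)\,\w(x;t,\la)\,\dx=0$ since $\deg S_k<n+2$. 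Since $F$ has parity $n$ it is automatically orthogonal to the $S_k(\cdot;t,\la)$ of the opposite parity, so $F$ is orthogonal with respect to $\w(\cdot;t,\la)$ to every $S_k(\cdot;t,\la)$ with $k<n$; combined with $\deg F\le n$ this forces $F=-a_nS_n(\cdot;t,\la)$ for some scalar $a_n$.

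To pin down $a_n$ I would evaluate at $x=0$. When $n$ is even, $x^2S_n(x;t,\la+1)$ vanishes at $0$, so $-S_{n+2}(0;t,\la)=-a_nS_n(0;t,\la)$ and $a_n=S_{n+2}(0;t,\la)/S_n(0;t,\la)$; when $n$ is odd, all of $x^2S_n(x;t,\la+1)$, $S_{n+2}(x;t,\la)$ and $S_n(x;t,\la)$ vanish at $0$, so I would differentiate once and set $x=0$, obtaining $-S_{n+2}'(0;t,\la)=-a_nS_n'(0;t,\la)$ and $a_n=S_{n+2}'(0;t,\la)/S_n'(0;t,\la)$. Substituting $S_{n+2}(x;t,\la)=xS_{n+1}(x;t,\la)-\b_{n+1}S_n(x;t,\la)$ into the displayed identity then yields \eqref{l+2}.

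The routine calculations are minimal; the points needing care are (i) the parity bookkeeping that reduces the a priori three-term expansion $x^2S_n(\cdot;t,\la+1)=c_2S_{n+2}+c_1S_{n+1}+c_0S_n$ to just two terms (the $S_{n+1}$ coefficient drops out by parity and everything below $S_n$ collapses by orthogonality), and (ii) checking that $a_n$ is well defined, i.e.\ $S_n(0;t,\la)\ne0$ for $n$ even and $S_n'(0;t,\la)\ne0$ for $n$ odd, which holds because under the substitution $x\mapsto x^2$ the even and odd parts of $S_n$ correspond to orthogonal polynomials on $[0,\infty)$ whose zeros lie strictly in $(0,\infty)$. An alternative would be to derive \eqref{l+2} from a confluent form of Christoffel's formula for the multiplier $x^2$, in the spirit of the proof of Lemma \ref{mreca}, but the parity-and-orthogonality argument above is the most direct route.
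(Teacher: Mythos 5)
Your proof is correct, and it reaches the key identity $x^2S_n(x;t,\la+1)=S_{n+2}(x;t,\la)-a_nS_n(x;t,\la)$ by a different route from the paper. The paper invokes the confluent form of Christoffel's theorem for the multiplier $x^2$ (a double zero at the origin), writes $x^2S_n(x;t,\la+1)$ as a $3\times3$ determinant in $S_n$, $S_{n+1}$, $S_{n+2}$ with rows of values and derivatives at $0$, and then uses the parity relations $S_{2m+1}(0;t,\la)=S_{2m}'(0;t,\la)=0$, $S_{2m}(0;t,\la)\neq0$, $S_{2m+1}'(0;t,\la)\neq0$ to collapse the determinant to exactly the two-term formula above; the final step (eliminating $S_{n+2}$ via the recurrence \eqref{eq:3rr}) is the same in both arguments. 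You instead prove the two-term connection formula from scratch: the difference $F(x)=x^2S_n(x;t,\la+1)-S_{n+2}(x;t,\la)$ has degree at most $n$ and fixed parity, the change of weight $x^2\w(x;t,\la)=\w(x;t,\la+1)$ together with parity makes $F$ orthogonal to $S_0,\dots,S_{n-1}$ with respect to $\w(\cdot;t,\la)$, hence $F$ is a constant multiple of $S_n(\cdot;t,\la)$, and the constant is read off at $x=0$ (value for $n$ even, first derivative for $n$ odd). This is essentially a self-contained derivation of the special case of Christoffel's formula that the paper quotes, so it is more elementary and avoids the delicate determinant bookkeeping, at the cost of being slightly longer; you also supply the nonvanishing of $S_n(0;t,\la)$ ($n$ even) and $S_n'(0;t,\la)$ ($n$ odd), which the paper asserts without comment. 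Both arguments are sound and yield \eqref{l+2} in the same way at the end.
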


\begin{proof}The weight function associated with the polynomials $S_n(x;t,\la+1)$ is 
\begin{align*}\w(x;t,\la+1)&=|x|^{2\la+3}\exp(-x^6+tx^2) 
=x^2\w(x;t,\la).\end{align*} 
The factor $x^2$ by which the weight $\w(x;t,\la)$ is modified has a double zero at the origin and therefore Christoffel's formula (cf.~\cite[Theorem 2.5]{refSzego}), applied to the monic polynomials $S_n(x;t,\la+1)$, is
\[
x^2S_{n}(x;t,\la+1)=\frac{1}{S_n(0;t,\la)S_{n+1}'(0;t,\la)-S_n'(0;t,\la)S_{n+1}(0;t,\la)}\left|\begin{matrix} S_{n}(x;t,\la) & S_{n+1}(x;t,\la) & S_{n+2}(x;t,\la)\\
S_{n}(0;t,\la) & S_{n+1}(0;t,\la) & S_{n+2}(0;t,\la)\\
S_{n}'(0;t,\la) & S_{n+1}'(0;t,\la) & S_{n+2}'(0;t,\la)\\\end{matrix}\right|
\]
Since the weight $\w(x;t,\la)$ is even, we have that $S_{2n+1}(0;t,\la)=S_{2n}'(0;t,\la)=0$ while $S_{2n}(0;t,\la)\neq0$ and $S_{2n+1}'(0;t,\la)\neq0$, hence 
\[
x^2S_{n}(x;t,\la+1)=\frac{-1}{ S_n'(0;t,\la)S_{n+1}(0;t,\la)}\left|\begin{matrix} S_{n}(x;t,\la) & S_{n+1}(x;t,\la) & S_{n+2}(x;t,\la)\\
0 & S_{n+1}(0;t,\la) &0\\
S_{n}'(0;t,\la) & 0 & S_{n+2}'(0;t,\la)\\\end{matrix}\right|
\] for $n$ odd, while, for $n$ even, 
\[
x^2S_{n}(x;t,\la+1)=\frac{1}{S_n(0;t,\la)S_{n+1}'(0;t,\la) }\left|\begin{matrix} S_{n}(x;t,\la) & S_{n+1}(x;t,\la) & S_{n+2}(x;t,\la)\\
S_{n}(0;t,\la) & 0& S_{n+2}(0;t,\la)\\
0 & S_{n+1}'(0;t,\la) &0\\\end{matrix}\right|
\]
This yields
\beq\label{even}
x^2S_n(x;t,\la+1)=S_{n+2}(x;t,\la)-a_nS_n(x;t,\la)
\eeq and the result follows by using the three-term recurrence relation \eqref{eq:3rr} to eliminate $S_{n+2}(x;t,\la)$ in \eqref{even}.
\comment{\item[(ii)] Replacing $n$ by $n-2$ in \eqref{l+2} and then using the three-term recurrence relation \[S_{n-2}(x;t,\la)=\frac{1}{\b_{n-1}}\left(xS_{n-1}(x;t,\la)-S_n(x;t,\la)\right)\] to eliminate $S_{n-2}(x;t,\la)$ yields the result.} 
\end{proof}

\subsection{Zeros of generalised sextic Freud polynomials}
\noindent When the weight is even, the zeros of the corresponding orthogonal polynomials are symmetric about the origin. This implies that the positive and the negative zeros have opposing monotonicity and therefore we only need to consider the monotonicity of the positive zeros.
\begin{lemma}\label{symmon}
Let $\w_0(x)$ be a symmetric positive weight on $(a,b)$ for which all the moments exist and let\beq\label{genws}\w(x;t,\rho)=|C(x)|^\rho \exp\{tD(x)\} \w_0(x),\qquad t\in \R, \qquad \rho>-1\eeq where $D(x)$ is an even function. Consider the sequence of semiclassical orthogonal polynomials $\{S_n(x;t,\rho)\}_{n=0}^{\infty}$ associated with the weight \eqref{genws} and denote the $\lfloor n/2\rfloor$ real, positive zeros of $S_n(x;t,\gamma)$ in increasing order by $x_{n,k}(t,\gamma)$, $k=1,2,\dots,\lfloor n/2\rfloor$,
where $\lfloor m \rfloor$ is the largest integer less than or equal to  $m$. 
Then, for a fixed value of $\nu$, $\nu\in \{1,2,\dots, \lfloor n/2 \rfloor\}$, the $\nu$-th zero $x_{n,\nu}(\la,t)$
\begin{itemize}
\item[(i)] increases when $\rho$ increases, if $\displaystyle{\frac{1}{C(x)}\deriv{}{x}C(x)>0}$ for $x\in (0,b)$;\\[-0.4cm]
\item[(ii)] increases when $t$ increases, if $\displaystyle{\deriv{}{x}D(x)>0}$ for $x\in (0,b)$;
\end{itemize}
\end{lemma}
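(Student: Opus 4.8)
The plan is to reduce the statement to Markov's monotonicity theorem --- equivalently, to Lemma~\ref{mono} --- via the quadratic substitution $u=x^2$, which turns the symmetric weight on $(a,b)=(-b,b)$ into an (unsymmetric) weight on $(0,b^2)$. Markov's theorem cannot be applied directly to $\w(x;t,\rho)$ on $(-b,b)$, since the logarithmic derivatives $\partial_\rho\ln\w=\ln|C(x)|$ and $\partial_t\ln\w=D(x)$ are even, hence never monotone on a symmetric interval; passing to the half-range variable $u$ removes this obstruction.

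First I would carry out the symmetrisation. Because $\w(x;t,\rho)$ is even, its monic orthogonal polynomials split according to parity,
\[ S_{2m}(x;t,\rho)=P^{(0)}_m(x^2;t,\rho),\qquad S_{2m+1}(x;t,\rho)=x\,P^{(1)}_m(x^2;t,\rho), \]
where $P^{(0)}_m$ and $P^{(1)}_m$ are the monic orthogonal polynomials of degree $m$ on $(0,b^2)$ for the weights $\widetilde{\w}_0(u;t,\rho)=u^{-1/2}\w(\sqrt u;t,\rho)$ and $\widetilde{\w}_1(u;t,\rho)=u^{1/2}\w(\sqrt u;t,\rho)$ respectively; both are positive on $(0,b^2)$ and inherit the finiteness of all moments from $\w_0$. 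Hence the $\lfloor n/2\rfloor$ positive zeros of $S_n(x;t,\rho)$ are exactly the square roots of the zeros of $P^{(0)}_{\lfloor n/2\rfloor}$ when $n$ is even, and of $P^{(1)}_{\lfloor n/2\rfloor}$ when $n$ is odd. Writing $x_{n,\nu}=\sqrt{u_\nu}$, and using that $u\mapsto\sqrt u$ is strictly increasing on $(0,b^2)$, it then suffices to prove that each $u_\nu$ increases with $\rho$ under hypothesis~(i), and with $t$ under hypothesis~(ii).

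Next I would write the half-range weights in the form~\eqref{scweight1}: for $j\in\{0,1\}$,
\[ \widetilde{\w}_j(u;t,\rho)=\big|C(\sqrt u)\big|^{\rho}\exp\{tD(\sqrt u)\}\,\big[u^{(2j-1)/2}\w_0(\sqrt u)\big], \]
so that the bracketed factor is a positive background weight on $(0,b^2)$ while the roles of ``$C$'' and ``$D$'' in Lemma~\ref{mono} are played by $C(\sqrt u)$ and $D(\sqrt u)$. Then $\partial_\rho\ln\widetilde{\w}_j=\ln|C(\sqrt u)|$ and $\partial_t\ln\widetilde{\w}_j=D(\sqrt u)$, and by the chain rule
\[ \deriv{}{u}\ln\big|C(\sqrt u)\big|=\frac{1}{2\sqrt u}\cdot\frac{\text{sgn}\,C(\sqrt u)}{\big|C(\sqrt u)\big|}\,C'(\sqrt u)=\frac{1}{2\sqrt u}\cdot\frac{C'(\sqrt u)}{C(\sqrt u)},\qquad\deriv{}{u}D(\sqrt u)=\frac{D'(\sqrt u)}{2\sqrt u}. \]
Since $\sqrt u\in(0,b)$ for $u\in(0,b^2)$, hypothesis~(i) makes $\ln|C(\sqrt u)|$ an increasing function of $u$, and hypothesis~(ii) makes $D(\sqrt u)$ an increasing function of $u$, on $(0,b^2)$. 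Markov's monotonicity theorem (cf.~\cite[Theorem~6.12.1]{refSzego}), or Lemma~\ref{mono} applied verbatim to $P^{(0)}_m$ and $P^{(1)}_m$, then yields that every zero $u_\nu$ increases with $\rho$ under~(i) and with $t$ under~(ii); translating back through $x_{n,\nu}=\sqrt{u_\nu}$ finishes the proof.

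The symmetrisation algebra and the two sign computations are routine. I expect the only delicate point to be confirming that $\widetilde{\w}_0$ and $\widetilde{\w}_1$ are admissible weights on $(0,b^2)$ --- positive, with finite moments of all orders, and depending smoothly on $(t,\rho)$ --- so that Markov's theorem (equivalently Lemma~\ref{mono}) genuinely applies; this uses $\w_0>0$, the non-vanishing of $C$ on $(0,b)$ (implicit in hypothesis~(i), where $C'/C$ appears), and the existence of all moments of $\w_0$.
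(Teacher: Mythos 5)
Your argument is correct, but it follows a different route from the paper. The paper disposes of Lemma \ref{symmon} in one line: it repeats the logarithmic-derivative computation of Lemma \ref{mono} ($\partial_\rho\ln\w=\ln|C(x)|$, $\partial_t\ln\w=D(x)$) and then invokes the generalised version of Markov's monotonicity theorem for even weights, \cite[Theorem 2.1]{refJWZ}, which asserts monotonicity of the positive zeros as soon as the logarithmic derivative with respect to the parameter is increasing on $(0,b)$. You instead avoid citing that generalised theorem by re-proving what is needed of it in this setting: the standard parity splitting $S_{2m}(x)=P^{(0)}_m(x^2)$, $S_{2m+1}(x)=xP^{(1)}_m(x^2)$ with half-range weights $u^{\mp1/2}\w(\sqrt{u})$ on $(0,b^2)$, followed by the classical Markov theorem \cite[Theorem 6.12.1]{refSzego} (equivalently Lemma \ref{mono}) in the variable $u=x^2$, and the chain-rule sign checks that hypotheses (i) and (ii) transfer to the transformed weights. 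This buys a self-contained proof at the cost of redoing the symmetrisation that \cite{refJWZ} packages once and for all; the paper's citation is shorter and emphasises that the only new input relative to Lemma \ref{mono} is the even-weight variant of Markov's theorem. Two small points worth making explicit if you keep your version: the reduction uses that the full weight $\w(x;t,\rho)$ is even (so $|C(x)|$ must be even as well as $D$, which is implicit in the lemma since it speaks of $\lfloor n/2\rfloor$ positive zeros and holds in the application $C(x)=x$), and the admissibility of the half-range weights (positivity, finite moments, smooth parameter dependence) that you flag at the end is indeed needed for Markov's theorem but is immediate here, exactly as you say.
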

\begin{proof} The proof follows along the same lines as that of Lemma \ref{mono} using the generalised version of Markov's monotonicity theorem (cf.~\cite[Theorem 2.1]{refJWZ}) for $x>0$.
\end{proof}
\begin{corollary}Let $\{S_n(x;t,\la)\}_{n=0}^{\infty}$ be the sequence of monic generalised sextic Freud polynomials orthogonal with respect to the weight \eqref{genFreud6} and let $0<x_{\lfloor n/2 \rfloor,n}<\dots<x_{2,n} <x_{1,n} $ denote the positive zeros of $S_n(x;t,\la)$.
Then, for $\la>-1$ and $t\in \R$ and for a fixed value of $\nu$, $\nu\in \{1,2,\dots, \lfloor n/2 \rfloor\}$, the $\nu$-th zero $x_{n,\nu} $ increases when  (i) $\la$ increases; and (ii)
$t$ increases.
\end{corollary}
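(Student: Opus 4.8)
The plan is to obtain this as an immediate consequence of Lemma~\ref{symmon}, exactly as the corresponding statement for generalised Airy polynomials followed from Lemma~\ref{mono}. First I would recast the generalised sextic Freud weight \eqref{genFreud6} in the template form \eqref{genws}: comparing
\[
\w(x;t,\la)=|x|^{2\la+1}\exp\left(-x^6+tx^2\right)=|C(x)|^{\rho}\exp\{tD(x)\}\,\w_0(x)
\]
leads to the choice $C(x)=x$, $\rho=2\la+1$, $D(x)=x^2$, and $\w_0(x)=\exp(-x^6)$. Here $\w_0$ is a symmetric, strictly positive weight on $(a,b)=(-\infty,\infty)$ with finite moments of all orders, and $D(x)=x^2$ is even, so every hypothesis of Lemma~\ref{symmon} is satisfied.

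Next I would verify the two sign conditions on the positive half-line. For part~(i) one needs $\tfrac{1}{C(x)}\,\deriv{}{x}C(x)>0$ on $(0,b)$; with $C(x)=x$ this is $1/x>0$, valid for all $x>0$, so Lemma~\ref{symmon}(i) shows that each positive zero $x_{n,\nu}$ is a strictly increasing function of $\rho$. Since $\rho=2\la+1$ is an affine, strictly increasing function of $\la$, increasing $\la$ increases $\rho$ and hence increases $x_{n,\nu}$, which is (i). For part~(ii) one needs $\deriv{}{x}D(x)>0$ on $(0,b)$; with $D(x)=x^2$ this is $2x>0$, again valid for all $x>0$, so Lemma~\ref{symmon}(ii) gives that each positive zero increases with $t$, which is (ii). The symmetry of the weight then transfers the monotonicity (with opposite orientation) to the negative zeros, but only the positive zeros need to be discussed, as noted before Lemma~\ref{symmon}.

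There is no genuine obstacle: all the analytic work sits inside Lemma~\ref{symmon} and, behind it, the generalised Markov monotonicity theorem of \cite{refJWZ}. The only point deserving a moment's care is the reparametrisation $\rho=2\la+1$, which is harmless precisely because $\la\mapsto 2\la+1$ is monotone, so monotonicity in $\rho$ is equivalent to monotonicity in $\la$. (One could instead take $C(x)=x^2$ with $\rho=\la$ and absorb the residual factor $|x|$ into $\w_0$, but then $\w_0(x)=|x|\exp(-x^6)$ vanishes at the origin; the choice above avoids this minor nuisance while the sign condition $\tfrac{1}{C(x)}\,\deriv{}{x}C(x)=1/x>0$ is cleanest.) Either way the corollary follows at once.
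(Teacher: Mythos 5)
Your proof is correct and follows exactly the paper's route: the paper likewise deduces the corollary from Lemma \ref{symmon} with $C(x)=x$, $D(x)=x^2$, $\rho=2\la+1$ and $\w_0(x)=\exp(-x^6)$. Your additional remarks on verifying the sign conditions and on the monotone reparametrisation $\rho=2\la+1$ simply make explicit what the paper leaves implicit.
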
 
\begin{proof} This follows from Lemma \ref{symmon}, taking $C(x)=x$, $D(x)=x^2$, $\rho=2\la+1$ and $\w_0(x)=\exp(-x^6)$.
\end{proof}

%

Mixed recurrence relations involving polynomials from different orthogonal sequences, such as the relation derived in Lemma \ref{mrec}, provide information on the relative positioning of zeros of the polynomials in the relation. In the next theorem we prove that the zeros of $S_n(x;t,\la)$, the monic generalised sextic Freud polynomials orthogonal with respect to the weight
\eqref{genFreud6}, and the zeros of $S_{n-1}(x;t,\la+k)$ interlace for $\la>-1$, $t\in \R$ and $k\in (0,1]$ fixed.

\begin{theorem} \label{int} Let $\la>-1$, $t\in \R$ and $k\in (0,1)$. Let $\{S_n(x;t,\la)\}$ be the monic generalised sextic Freud polynomials orthogonal with respect to the weight
\eqref{genFreud6}. Denote 
%
the positive zeros of $S_{n}(x;t,\la+k)$ by \[0<x_{\lfloor \frac n2\rfloor,n}^{(t,\la+k)}<x_{\lfloor \frac n2\rfloor -1,n}^{(t,\la+k)}<\dots<x_{2,n}^{(t,\la+k)}<x_{1,n }^{(t,\la+k)}.\]

\noindent If $n$ is even, then
\begin{align}\label{inteven}
0<&x_{\lfloor \frac{n}{2}\rfloor,n}^{(t;\la)}<x_{\lfloor \frac{n-1}{2} \rfloor,n-1}^{(t;\la)}<x_{\lfloor \frac{n-1}{2} \rfloor,n-1}^{(t,\la+k)}<x_{\lfloor \frac{n-1}{2} \rfloor,n-1}^{(t,\la+1)}<x_{\lfloor \frac{n}{2} \rfloor-1,n}^{(t;\la)}<\dots\nonumber\\ &\dots <x_{2,n}^{(t;\la)}<x_{1,n-1}^{(t;\la)}<x_{1,n-1}^{(t,\la+k)}<x_{1,n-1}^{(t,\la+1)}<x_{1,n}^{(t;\la)}
\end{align}
and if $n$ is odd, then
\begin{align}\label{intodd}
0<&x_{\lfloor \frac{n-1}{2} \rfloor,n-1}^{(t;\la)}<x_{\lfloor \frac{n-1}{2} \rfloor,n-1}^{(t,\la+k)}<x_{\lfloor \frac{n-1}{2} \rfloor,n-1}^{(t,\la+1)}<x_{\lfloor \frac{n}{2} \rfloor,n}^{(t;\la)}<x_{\lfloor \frac{n-1}{2} \rfloor-1,n-1}^{(t;\la)}<\dots\nonumber\\ &\dots <x_{2,n}^{(t;\la)}<x_{1,n-1}^{(t;\la)}<x_{1,n-1}^{(t,\la+k)}<x_{1,n-1}^{(t,\la+1)}<x_{1,n}^{(t;\la)}.
\end{align}\end{theorem}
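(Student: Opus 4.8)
The plan is to build the interlacing chain from two ingredients: the classical separation of zeros of consecutive orthogonal polynomials in the same sequence, together with the zero-interlacing consequences of the mixed recurrence relation \eqref{l+2}. First I would recall that, since each $S_n(x;t,\la)$ is orthogonal with respect to a positive symmetric weight, its zeros are real, simple, symmetric about $0$, and the positive zeros of $S_n$ and $S_{n-1}$ strictly interlace (this is the standard separation \eqref{sep} specialised to the positive half-line using symmetry). This already gives the sub-chain $0<x_{\lfloor n/2\rfloor,n}^{(t;\la)}<x_{\lfloor(n-1)/2\rfloor,n-1}^{(t;\la)}<x_{\lfloor n/2\rfloor-1,n}^{(t;\la)}<\dots<x_{1,n-1}^{(t;\la)}<x_{1,n}^{(t;\la)}$, treating the even and odd cases separately because the number of positive zeros of $S_n$ versus $S_{n-1}$ differs by one exactly when $n$ is even.

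Next I would insert the polynomials with shifted parameter. The key is Lemma \ref{mrec}, which I would apply with $n$ replaced by $n-1$ to write $x^2 S_{n-1}(x;t,\la+1)=xS_n(x;t,\la)-(\b_n+a_{n-1})S_{n-1}(x;t,\la)$. Evaluating this identity at a positive zero $\xi$ of $S_{n-1}(x;t,\la)$ gives $\xi^2 S_{n-1}(\xi;t,\la+1)=\xi S_n(\xi;t,\la)$, and at a positive zero $\eta$ of $S_n(x;t,\la)$ gives $\eta^2 S_{n-1}(\eta;t,\la+1)=-(\b_n+a_{n-1})S_{n-1}(\eta;t,\la)$. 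Comparing signs of $S_{n-1}(\cdot;t,\la+1)$ at consecutive zeros of $S_n(x;t,\la)$ using the known sign alternation of $S_n$ at the zeros of $S_{n-1}$ (and vice versa) forces exactly one zero of $S_{n-1}(x;t,\la+1)$ strictly between consecutive positive zeros of $S_n(x;t,\la)$; a counting argument then pins down its exact location as $x_{j,n-1}^{(t;\la)}<x_{j,n-1}^{(t,\la+1)}<x_{j,n}^{(t;\la)}$ in the interlacing chain (with the innermost interval handled carefully in the odd case, where $0$ is itself a zero of $S_{n-1}$). One has to check the sign of $\b_n+a_{n-1}$; from $\b_n>0$ and the Christoffel-type formula \eqref{even} (positivity of $a_n$ follows because it is a ratio of values/derivatives of orthogonal polynomials at $0$ with consistent sign), this constant is positive, which is what makes the sign-comparison work.

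Finally I would locate the zeros of $S_{n-1}(x;t,\la+k)$ for $0<k<1$ between those of $S_{n-1}(x;t,\la)$ and those of $S_{n-1}(x;t,\la+1)$. This is a monotonicity-in-$\la$ statement: by Corollary following Lemma \ref{symmon} (or directly by the generalised Markov monotonicity theorem, since $\partial_\la\ln\w(x;t,\la)=\ln|x|$ and $\tfrac1x\tfrac{\d}{\d x}x>0$ on $(0,b)$), each positive zero $x_{j,n-1}^{(t,\la+k)}$ is a strictly increasing continuous function of $k$ on $[0,1]$, hence $x_{j,n-1}^{(t;\la)}<x_{j,n-1}^{(t,\la+k)}<x_{j,n-1}^{(t,\la+1)}$ for $0<k<1$. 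Splicing the three families of inequalities together yields \eqref{inteven} and \eqref{intodd}.

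The main obstacle I anticipate is the bookkeeping in the odd case, where $S_{n-1}$ has the extra zero at the origin and the numbers of positive zeros of the three polynomials $S_{n-1},S_n$ and the shifted $S_{n-1}$ do not all match; one must verify that the zero of $S_{n-1}(x;t,\la+1)$ (and of $S_{n-1}(x;t,\la+k)$) closest to the origin still falls in the predicted slot $x_{\lfloor(n-1)/2\rfloor,n-1}^{(t;\la)}<x_{\lfloor(n-1)/2\rfloor,n-1}^{(t,\la+k)}<x_{\lfloor(n-1)/2\rfloor,n-1}^{(t,\la+1)}<x_{\lfloor n/2\rfloor,n}^{(t;\la)}$ rather than collapsing to $0$, which it cannot since $x^2$ in \eqref{even} absorbs the double zero and $a_n\neq 0$.
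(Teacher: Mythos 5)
Your skeleton is the same as the paper's: (i) classical interlacing of the positive zeros of $S_{n-1}(x;t,\la)$ and $S_n(x;t,\la)$, (ii) the mixed relation of Lemma \ref{mrec} with $n\mapsto n-1$ to interlace the zeros of $S_{n-1}(x;t,\la+1)$ with those of $S_n(x;t,\la)$, and (iii) monotonicity in $\la$ (Lemma \ref{symmon}) to slot in the zeros at $\la+k$, then splicing the three chains. However, your auxiliary sign claim is false, and you lean on it explicitly ("this constant is positive, which is what makes the sign-comparison work"). Evaluating the three-term recurrence \eqref{eq:3rr} at $x=0$ gives $S_{n+1}(0;t,\la)=-\b_n S_{n-1}(0;t,\la)$, so for $n-1$ even one has $a_{n-1}=-\b_n$ and hence $\b_n+a_{n-1}=0$; for $n-1$ odd, differentiating the recurrence at $x=0$ gives $\b_n+a_{n-1}=S_n(0;t,\la)/S_{n-1}'(0;t,\la)$, which is \emph{negative} (the values of symmetric monic orthogonal polynomials and their derivatives at the origin alternate in sign). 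So $a_n$ is not a positive quantity. In the even-$n$ case this slip is repairable, since all that is needed is $\b_n+a_{n-1}\neq0$ together with the sign alternation of $S_{n-1}(\cdot;t,\la)$ at consecutive zeros of $S_n(\cdot;t,\la)$; the paper avoids the issue altogether by evaluating \eqref{l+22} at two consecutive positive zeros of $S_n(x;t,\la)$ and multiplying, so that the constant enters only through $(\b_n+a_{n-1})^2$ and its sign is irrelevant.

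The more serious point is the odd case, which you dismiss as "bookkeeping" and propose to settle by noting $a_n\neq0$. That is not the obstruction: when $n$ is odd, $n-1$ is even and, as computed above, $\b_n+a_{n-1}=0$, so \eqref{l+22} degenerates to $x\,S_{n-1}(x;t,\la+1)=S_n(x;t,\la)$ (equivalently, by the symmetrisation underlying \eqref{genFreud6}, $S_{2m+1}(x;t,\la)=xS_{2m}(x;t,\la+1)$ identically). Your sign-comparison at zeros of $S_n(\cdot;t,\la)$ then gives $0=0$ and produces no interlacing information; in fact the positive zeros of $S_{n-1}(\cdot;t,\la+1)$ coincide with those of $S_n(\cdot;t,\la)$, so strict separation of these two sets cannot be derived by this route. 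To be fair, the paper's own proof treats only the even case in detail and asserts that \eqref{intodd} "follows along the same lines", so this gap is shared with the source; but your proposal, as written, both asserts an incorrect positivity and offers a fix for the odd case that does not address the actual degeneracy.
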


\begin{proof} In Theorem \ref{mono} we proved that the positive zeros of $S_{n-1}(x;t,\la)$ monotonically increase as $\la$ increases. This implies that, for each fixed $\ell\in\{1,2,\dots,\lfloor \frac{n-1}{2}\rfloor\}$, 
\beq\label{mo} x_{ \ell ,n-1}^{(t;\la)}<x_{\ell,n-1}^{(t,\la+k)}<x_{ \ell,n -1}^{(t,\la+1)}.\eeq 
On the other hand, the zeros of $S_{n}(x;t,\la)$ and $S_{n-1}(x;t,\la)$, two consecutive polynomials in the sequence of orthogonal polynomials, are interlacing, that is, when $n$ is even, 
\beq\label{3.13b}
0<x_{\lfloor \frac{n}{2}\rfloor,n}^{(t;\la)}<x_{\lfloor\frac{n-1}{2}\rfloor,n-1}^{(t;\la)}<x_{\lfloor\frac{n}{2}\rfloor-1,n}^{(t;\la)}<\dots<x_{2,n}^{(t;\la)}<x_{1,n-1}^{(t;\la)}<x_{1,n}^{(t;\la)}.
\eeq
Next, we prove that the zeros of $S_{n}(x;t,\la)$ interlace with those of $S_{n-1}(x;t,\la+1)$. Replacing $n$ by $n-1$ in \eqref{l+2} yields
\beq\label{l+22}
S_{n-1}(x;t,\la+1)=\frac{xS_{n}(x;t,\la)-(\b_{n}+a_{n-1})S_{n-1}(x;t,\la)}{x^2}.
\eeq
Evaluating \eqref{l+22} at consecutive zeros $x_{\ell}=x_{\ell,n}^{(t;\la)}$ and $x_{\ell+1}=x_{\ell+1,n}^{(t;\la)}$, $\ell=1, 2, \ldots, \lfloor \frac{n}{2}\rfloor-1$, of $S_{n}(x;t,\la)(x)$, we obtain
\[ 
S_{n-1}(x_{\ell};t,\la+1)S_{n-1}(x_{\ell+1};t,\la+1)=\frac{1}{x_{\ell}^2 x_{\ell+1}^2}(\b_{n}+a_{n-1})^2S_{n-1}(x_{\ell};t,\la)S_{n-1}(x_{\ell+1};t,\la)<0
\]
since the zeros of $S_{n}(x;t,\la)$ and $S_{n-1}(x;t,\la)$ seperate each other. So there is at least one positive zero of $S_{n}(x;t,\la+1)$ in the interval $(x_{\ell}, x_{\ell+1})$ for each $\ell=1, 2, \ldots, \lfloor \frac{n}{2}\rfloor-1$
and this implies that
\begin{align}\label{3.14b}
0<x_{\lfloor \frac{n}{2}\rfloor,n}^{(t;\la)}<x_{\lfloor \frac{n-1}{2} \rfloor,n-1}^{(t,\la+1)}<x_{\lfloor \frac{n}{2}\rfloor -12,n}^{(t;\la)}<x_{\lfloor \frac{n-1}{2} \rfloor-1,n-1}^{(t,\la+1)}<
\dots<x_{2,n-1}^{(t,\la+1)}<x_{2,n}^{(t;\la)}<x_{1,n-1}^{(t,\la+1)}<x_{1 ,n}^{(t;\la)}
\end{align}
\eqref{3.14b}, \eqref{mo} and \eqref{3.13b} yield \eqref{inteven}. The proof of \eqref{intodd} follows along the same lines.\end{proof}

Considering that when the weight function is even, the zeros of $S_n(x;t,\la)$ are symmetric about the origin with a zero at the origin when $n$ is odd, we have the following corollary.

\begin{corollary}\label{cor2b} With the same symbols as Theorem \ref{int}, we have for $n$ odd that

\[ x_{n,n}^{(t;\la)}<x_{n-1,n-1}^{(t;\la)}<x_{n-1,n-1}^{(t,\la+k)}<x_{n-1,n-1}^{(t,\la+1)}<x_{n-1,n}^{(t;\la)}<\dots<x_{2,n}^{(t;\la)}<x_{1,n-1}^{(t;\la)}<x_{1,n-1}^{(t,\la+k)}<x_{1,n-1}^{(t,\la+1)}<x_{1,n}^{(t;\la)} \]
while for $n$ even
\[x_{n,n}^{(t;\la)}<x_{n-1,n-1}^{(t;\la)}<x_{n-1,n-1}^{(t,\la+k)}<x_{n-1,n-1}^{(t,\la+1)}<x_{n-1,n}^{(t;\la)}<\dots<x_{\lfloor\frac{n-1}{2}\rfloor+2,n-1}^{(t,\la+1)}<x_{\lfloor\frac{n}{2}\rfloor+1,n}^{(t;\la)}<0\]
and \[0<x_{\lfloor\frac{n}{2}\rfloor,n}^{(t;\la)}<x_{\lfloor\frac{n-1}{2}\rfloor,n-1}^{(t;\la)}<x_{\lfloor\frac{n-1}{2}\rfloor,n-1}^{(t,\la+k)}<
x_{\lfloor\frac{n-1}{2}\rfloor,n-1}^{(t,\la+1)}<x_{\lfloor\frac{n}{2}\rfloor-1,n}^{(t;\la)}<\dots<x_{1,n-1}^{(t,\la+1)}<x_{1,n}^{(t;\la)} \]
with \[x_{\lfloor\frac{n-1}{2}\rfloor+1,n-1}^{(t;\la)}=x_{\lfloor\frac{n-1}{2}\rfloor+1,n-1}^{(t,\la+k)}=x_{\lfloor\frac{n-1}{2}\rfloor+1,n-1}^{(t,\la+1)}=0.\]
\end{corollary}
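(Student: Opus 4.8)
The plan is to deduce Corollary \ref{cor2b} from Theorem \ref{int} entirely by exploiting the symmetry of the weight \eqref{genFreud6}. Since $\w(-x;t,\la)=\w(x;t,\la)$, the monic orthogonal polynomials satisfy $S_m(-x;t,\gamma)=(-1)^mS_m(x;t,\gamma)$ for every degree $m$ and every parameter $\gamma\in\{\la,\la+k,\la+1\}$. Three consequences will be used repeatedly: the nonzero zeros of $S_m(x;t,\gamma)$ occur in pairs $\pm x$; the origin is a zero of $S_m(x;t,\gamma)$ precisely when $m$ is odd; and, writing the full set of zeros in decreasing order $x_{1,m}^{(t,\gamma)}>x_{2,m}^{(t,\gamma)}>\dots>x_{m,m}^{(t,\gamma)}$, one has the reflection identity $x_{j,m}^{(t,\gamma)}=-x_{m+1-j,m}^{(t,\gamma)}$. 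As usual all these zeros are real and simple since the weight is positive.

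First I would fix the parity of $n$ and record which of the polynomials appearing in the corollary vanishes at the origin. For $n$ odd, only $S_n(x;t,\la)$ has a zero at $0$, and this is its middle zero $x_{\lfloor n/2\rfloor+1,n}^{(t;\la)}$; the polynomials $S_{n-1}(x;t,\la)$, $S_{n-1}(x;t,\la+k)$, $S_{n-1}(x;t,\la+1)$ all have even degree $n-1$ and hence do not vanish at $0$. For $n$ even the roles are exactly reversed: the three degree-$(n-1)$ polynomials each have a (shared) zero at the origin, namely $x_{\lfloor (n-1)/2\rfloor+1,n-1}^{(t;\la)}=x_{\lfloor (n-1)/2\rfloor+1,n-1}^{(t,\la+k)}=x_{\lfloor (n-1)/2\rfloor+1,n-1}^{(t,\la+1)}=0$, while $S_n(x;t,\la)$ does not. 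In either case $S_n(\cdot;t,\la)$ and $S_{n-1}(\cdot;t,\la+\gamma)$ never vanish simultaneously at $0$, which is what lets the two one-sided chains of the corollary join cleanly across the origin.

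Next I would take the interlacing chain for the \emph{positive} zeros furnished by Theorem \ref{int} — namely \eqref{inteven} when $n$ is even and \eqref{intodd} when $n$ is odd — and reflect it through the origin, replacing each entry $x_{\ell,m}^{(t,\gamma)}$ by $-x_{\ell,m}^{(t,\gamma)}=x_{m+1-\ell,m}^{(t,\gamma)}$ (separately for $m=n$ and $m=n-1$) and reversing the direction of every inequality. This yields the corresponding interlacing chain for the negative zeros of $S_n(\cdot;t,\la)$, $S_{n-1}(\cdot;t,\la)$, $S_{n-1}(\cdot;t,\la+k)$ and $S_{n-1}(\cdot;t,\la+1)$. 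Concatenating the reflected (negative) chain, then the origin placed according to the parity bookkeeping of the previous step, then the original positive chain, produces the full chains asserted in Corollary \ref{cor2b}: a single chain for $n$ odd, and the two separate chains together with the triple equality at $0$ for $n$ even. Re-indexing the floor functions as $n$ changes parity and noting that, under reflection, the Markov-type monotonicity of the zeros in $\gamma$ from Lemma \ref{symmon}(i) flips orientation for negative zeros, one reads off the order in which $\la$, $\la+k$, $\la+1$ occur within each triple directly from \eqref{inteven}–\eqref{intodd}.

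There is no genuine analytic obstacle here — no new estimate or mixed recurrence is needed beyond what Theorem \ref{int} already supplies. The only delicate point, which I would execute by treating $n$ even and $n$ odd as separate cases, is the combinatorial bookkeeping: correctly converting the index $\ell$ of a positive zero into the index $m+1-\ell$ of its mirror image for both $m=n$ and $m=n-1$, tracking the shifts of $\lfloor n/2\rfloor$ and $\lfloor (n-1)/2\rfloor$ when the parity of $n$ flips, and checking that the origin lands exactly at the junction of the two one-sided chains (as the middle zero of $S_n$ when $n$ is odd, and as the common zero of the three degree-$(n-1)$ polynomials when $n$ is even).
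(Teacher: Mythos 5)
Your proposal is correct and is essentially the paper's own argument: the corollary is deduced exactly as you describe, by combining the evenness of the weight (so $S_m(-x;t,\gamma)=(-1)^mS_m(x;t,\gamma)$, the zeros are symmetric about the origin, and $0$ is a zero only for odd degree) with the positive-zero interlacing of Theorem \ref{int}, reflecting \eqref{inteven}--\eqref{intodd} through the origin and carrying out the parity bookkeeping at $0$. Your remark that the monotonicity in $\la$ from Lemma \ref{symmon} reverses orientation under reflection is in fact the sharper reading: executed literally, the reflection gives the negative-side triples in the order $\la+1<\la+k<\la$, e.g.\ $x_{n-1,n-1}^{(t,\la+1)}<x_{n-1,n-1}^{(t,\la+k)}<x_{n-1,n-1}^{(t;\la)}$, so your argument actually yields the corrected ordering of the superscripts on the negative axis rather than the unreflected order displayed in the corollary.
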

\comment{\subsection{Bounds for the zeros}}

The three-term recurrence relation yields information on bounds of the extreme zeros of polynomials.

\begin{theorem}Let $\{S_n(x;t,\la)\}_{n=0}^{\infty}$ be the sequence of monic generalised sextic Freud polynomials orthogonal with respect to the weight \eqref{genFreud6}. For each $n=2,3,\dots,$ the largest zero, $x_{1,n}$, of $S_n(x;t,\la)$, satisfies 
\[0<x_{1,n} <\max_{1\leq k\leq n-1}\sqrt{c_n\b_k(t;\la)}\]
where $c_n=4\cos^2\left(\frac{\pi}{n+1}\right)+\ep$, $\ep>0$.

\end{theorem}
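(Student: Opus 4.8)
The plan is to realise the zeros of $S_n(x;t,\la)$ as the eigenvalues of the truncated, symmetrised Jacobi matrix attached to the recurrence \eqref{eq:3rr}, and then to bound the top eigenvalue by a Rayleigh-quotient comparison with a constant tridiagonal Toeplitz matrix whose spectrum is explicitly known.

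First I would recall that, because the weight \eqref{genFreud6} is symmetric and hence $\a_n\equiv0$, the $n$ zeros of $S_n(x;t,\la)$ are precisely the eigenvalues of the $n\times n$ real symmetric tridiagonal matrix $J_n$ with zero diagonal and sub/super-diagonal entries $\sqrt{\b_1},\sqrt{\b_2},\dots,\sqrt{\b_{n-1}}$ (cf.\ \cite[Ch.~II]{refChihara}, \cite[\S2.3]{refIsmail}, \cite[Ch.~III]{refSzego}). In particular $x_{1,n}=\lambda_{\max}(J_n)=\sup_{|u|=1}u^{\mathrm{T}}J_nu$, and $x_{1,n}>0$ since, for $n\ge2$, $J_n$ is a nonzero symmetric matrix of zero trace and therefore has at least one strictly positive eigenvalue.

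Next, writing $\b_{\max}=\max_{1\le k\le n-1}\b_k(t;\la)$ and $|u|=(|u_1|,\dots,|u_n|)$, for every unit vector $u$ I would estimate
\[
u^{\mathrm{T}}J_nu=2\sum_{k=1}^{n-1}\sqrt{\b_k}\,u_ku_{k+1}
\le 2\sqrt{\b_{\max}}\sum_{k=1}^{n-1}|u_k|\,|u_{k+1}|
=\sqrt{\b_{\max}}\;|u|^{\mathrm{T}}T_n|u|
\le \sqrt{\b_{\max}}\;\lambda_{\max}(T_n),
\]
where $T_n$ is the $n\times n$ tridiagonal matrix with zero diagonal and all sub/super-diagonal entries equal to $1$, and where I use that $|u|$ is again a unit vector. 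The eigenvalues of $T_n$ are the classical values $2\cos\!\big(\tfrac{k\pi}{n+1}\big)$, $k=1,\dots,n$ (an eigenvector for $2\cos\!\big(\tfrac{k\pi}{n+1}\big)$ having $j$-th entry $\sin\!\big(\tfrac{jk\pi}{n+1}\big)$), so $\lambda_{\max}(T_n)=2\cos\!\big(\tfrac{\pi}{n+1}\big)$; taking the supremum over $u$ yields $x_{1,n}\le 2\cos\!\big(\tfrac{\pi}{n+1}\big)\sqrt{\b_{\max}}$.

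Finally I would compare with the asserted bound: since $\ep>0$ and $\cos\!\big(\tfrac{\pi}{n+1}\big)>0$ for $n\ge2$,
\[
\max_{1\le k\le n-1}\sqrt{c_n\b_k(t;\la)}=\sqrt{c_n}\,\sqrt{\b_{\max}}
=\sqrt{4\cos^2\!\big(\tfrac{\pi}{n+1}\big)+\ep}\;\sqrt{\b_{\max}}
>2\cos\!\big(\tfrac{\pi}{n+1}\big)\sqrt{\b_{\max}}\ge x_{1,n},
\]
which, together with $x_{1,n}>0$, gives the theorem. Nothing here is a serious obstacle; the one non-routine ingredient is the sign-flip trick $u\mapsto|u|$, which turns the indefinite quadratic form $u^{\mathrm{T}}J_nu$ into the nonnegative form $|u|^{\mathrm{T}}T_n|u|$ and thereby reduces the problem to the explicit spectrum of $T_n$. (Equivalently one may invoke Perron--Frobenius monotonicity of the spectral radius under entrywise domination of the shifted matrices $J_n+cI\le \sqrt{\b_{\max}}\,T_n+cI$.) The extra $\ep$ serves only to upgrade the non-strict inequality obtained above to the strict one in the statement, equality $x_{1,n}=2\cos\!\big(\tfrac{\pi}{n+1}\big)\sqrt{\b_{\max}}$ being attained in the limiting case $\b_1=\dots=\b_{n-1}$.
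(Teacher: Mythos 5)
Your proof is correct, but it takes a genuinely different route from the paper. The paper disposes of this theorem in one line by citing Theorems 2 and 3 of Ismail and Li (\cite{refIsmailLi}), whose extreme-zero bounds rest on the Wall--Wetzel theorem on chain sequences, applied to the recurrence \eqref{eq:3rr}. You instead give a self-contained linear-algebra argument: identify the zeros of $S_n(x;t,\la)$ with the eigenvalues of the symmetrised truncated Jacobi matrix $J_n$ (zero diagonal since $\a_n\equiv0$, off-diagonal $\sqrt{\b_1},\dots,\sqrt{\b_{n-1}}$), then bound $\lambda_{\max}(J_n)$ by the Rayleigh quotient together with the sign-flip $u\mapsto|u|$ and the explicitly known spectrum of the constant tridiagonal Toeplitz matrix, whose top eigenvalue $2\cos\bigl(\tfrac{\pi}{n+1}\bigr)$ produces exactly the constant $c_n$ of the statement, with $\ep>0$ supplying strictness. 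All the steps check out: the eigenvalue identification, the estimate $u^{\mathrm T}J_nu\le\sqrt{\b_{\max}}\,\lambda_{\max}(T_n)$, the positivity of $x_{1,n}$ from the trace argument, and the final comparison. What your approach buys is transparency and independence of outside results — it essentially reproves the symmetric ($\a_n\equiv0$) case of the Ismail--Li bound from scratch; what the paper's citation buys is brevity and access to the more general chain-sequence machinery, which also covers non-symmetric recurrences and can yield refined bounds not expressible purely through $\b_{\max}$. For the bound as stated, the two routes give the same conclusion.
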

\begin{proof} 
The upper bound for the largest zero $x_{1,n}$ follows by applying \cite[Theorem 2 and 3]{refIsmailLi}, based on the Wall-Wetzel Theorem (see also \cite{refIsmail}), to the three-term recurrence relation \eqref{eq:3rr}. 
\end{proof}

\comment{\subsection{Convexity of the zeros when \boldmath{$\la=-\tfrac12$}}}

The Sturm Convexity Theorem (cf.~\cite{refSturm}) on the monotonicity of the distances between consecutive zeros, applies to the zeros of solutions of second-order differential equations in the normal form 
\[\deriv[2]{y}{x}+F(x)y=0. \]
Next we consider the implications of the convexity theorem of Sturm for the zeros of generalised sextic Freud polynomials when $\la=-\tfrac12$. We begin by considering the differential equation in normal form satisfied by generalised sextic Freud polynomials for $\la=-\tfrac12$ proved by Wang \etal\ in \cite{refWZC20a}.
\begin{theorem}Let 
\beq
\label{l0w}
\w(x)=\exp(-x^6+tx^2),\qquad x\in\R\eeq with $t\in\R$, and denote the monic orthogonal polynomials with respect to $\w(x)$ by $\mathcal{S}_n(x)$. Then, for $t<0$, the polynomials\beq\label{trans}S_n(x;t,\la)=\mathcal{S}_n(x)\sqrt{\frac{\w(x)}{\widetilde{A}_n(x)}}\eeq satisfy
\beq\label{eq:gende}
\deriv[2]{S_n}{x}+F(x)S_n(x;t,\la)=0
\eeq
where
\begin{align}\label{Fc}
F(x)&=\b_n \widetilde{A}_{n-1}(x)\widetilde{A}_n(x)-\frac{\w''(x)}{2\w(x)}-\widetilde{B}_n(x)\left\{\widetilde{B}_n(x)-6x^5+2tx\right\}+\frac{6x^5-2tx}{4}-3\left\{\frac{\widetilde{A}_n'(x)}{2\widetilde{A}_n(x)}\right\}^2\\&\qquad\nonumber+\widetilde{B}_n'(x)-\frac{(2\widetilde{B}_n(x)+6x^5-2tx)\widetilde{A}_n'(x)-\widetilde{A}_n''(x)}{2\widetilde{A}_n(x)}\\
\widetilde{A}_n(x)&=\frac{\mathcal{A}_n(x)}{x\b_n}=6\big[x^4-\tfrac13t +x^2\big(\b_n+\b_{n+1}\big)+\b_{n+2}\b_{n+1}+\big(\b_{n+1}+\b_{n}\big)^2+\b_{n-1}\b_{n}\big]\nonumber\\
\widetilde{B}_n(x)&=\frac{\mathcal{B}_n(x)}{x}=6x\b_n\big(x^2+\b_{n+1}+\b_n+\b_{n-1}\big)+(\la+\tfrac12)[1-(-1)^n].\nonumber
\end{align}

\end{theorem}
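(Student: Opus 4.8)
The plan is to start from the second-order linear \ode\ satisfied by the monic orthogonal polynomials $\mathcal{S}_n(x)$ and put it in normal form. Since $\la=-\tfrac12$ makes $2\la+1=0$, the weight \eqref{l0w} is $\w(x)=\e^{-\v(x)}$ with $\v(x)=x^6-tx^2$, positive and $C^{\infty}$ on $\R$, so the classical ladder relations $\mathcal{S}_n'=\b_nA_n\mathcal{S}_{n-1}-B_n\mathcal{S}_n$ and $\mathcal{S}_{n-1}'=(B_n+\v')\mathcal{S}_{n-1}-A_{n-1}\mathcal{S}_n$ hold, with $A_n,B_n$ the usual integral coefficients (cf.\ \cite[\S3.2]{refIsmail}). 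By \eqref{AnBn2} one has $\mathcal{A}_n=x\b_n\widetilde A_n$ and, since the term $(\la+\tfrac12)[1-(-1)^n]$ vanishes for $\la=-\tfrac12$, $\mathcal{B}_n=x\widetilde B_n$; hence $A_n=\mathcal{A}_n/(x\b_n)=\widetilde A_n$ and $B_n=\mathcal{B}_n/x=\widetilde B_n$ as in \eqref{Fc}. Eliminating $\mathcal{S}_{n-1}$ — differentiate the first relation, substitute the second, then use the first again — gives $\mathcal{S}_n''+\mathcal{Q}_n\mathcal{S}_n'+\mathcal{T}_n\mathcal{S}_n=0$ with
\[\mathcal{Q}_n=-\v'-\frac{\widetilde A_n'}{\widetilde A_n},\qquad\mathcal{T}_n=\widetilde B_n'-\frac{\widetilde B_n\widetilde A_n'}{\widetilde A_n}-\widetilde B_n(\v'+\widetilde B_n)+\b_n\widetilde A_{n-1}\widetilde A_n;\]
this is also Theorem \ref{thm:gende} specialised to $\la=-\tfrac12$ and divided by $x$.

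Next I would eliminate the first-derivative term: for $y''+\mathcal{Q}_ny'+\mathcal{T}_ny=0$ the substitution $y=g(x)u$ with $g'/g=\tfrac12\mathcal{Q}_n$ gives $u''+Fu=0$ with $F=\mathcal{T}_n-\tfrac14\mathcal{Q}_n^2-\tfrac12\mathcal{Q}_n'$. Since $\mathcal{Q}_n=\dfrac{\w'}{\w}-\dfrac{\widetilde A_n'}{\widetilde A_n}=\deriv{}{x}\ln\dfrac{\w}{\widetilde A_n}$, integration gives $g=\sqrt{\widetilde A_n/\w}$, so $u=\mathcal{S}_n/g=\mathcal{S}_n\sqrt{\w/\widetilde A_n}$, which is precisely \eqref{trans}. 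The hypothesis $t<0$ enters exactly here: since
\[\widetilde A_n(x)=6\bigl[x^4+(\b_n+\b_{n+1})x^2+\b_{n+2}\b_{n+1}+(\b_{n+1}+\b_n)^2+\b_{n-1}\b_n-\tfrac13t\bigr],\]
every term in the bracket is $\ge 0$ for $x\in\R$ and the constant part is strictly positive (the $\b_k$ are positive and $-\tfrac13t>0$), so $\widetilde A_n(x)>0$ throughout $\R$; hence $g$ is smooth and positive, the transformation \eqref{trans} is globally valid, and $F$ — which carries $\widetilde A_n$ and $\widetilde A_n^2$ in denominators — has no real poles.

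It then remains to match $F=\mathcal{T}_n-\tfrac14\mathcal{Q}_n^2-\tfrac12\mathcal{Q}_n'$ with \eqref{Fc}. Expanding with $\mathcal{Q}_n'=-\v''-\dfrac{\widetilde A_n''}{\widetilde A_n}+\bigl(\widetilde A_n'/\widetilde A_n\bigr)^2$ and $\mathcal{Q}_n^2=(\v')^2+2\v'\dfrac{\widetilde A_n'}{\widetilde A_n}+\bigl(\widetilde A_n'/\widetilde A_n\bigr)^2$, the terms carrying $\widetilde A_n$ in a denominator assemble into $-3\bigl(\widetilde A_n'/2\widetilde A_n\bigr)^2$ and $-\dfrac{(2\widetilde B_n+\v')\widetilde A_n'-\widetilde A_n''}{2\widetilde A_n}$, while the purely $\v$-dependent part is reorganised through the identity $\w''/\w=(\v')^2-\v''$ so as to exhibit the $-\w''/(2\w)$ term of \eqref{Fc}; inserting $\v'=6x^5-2tx$ and the explicit polynomial expressions for $\widetilde A_n,\widetilde A_n',\widetilde A_n'',\widetilde B_n,\widetilde B_n'$ in the $\b_k$ then produces \eqref{Fc}. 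I expect this final simplification — elementary but long — to be the main chore, together with keeping the sign convention $\w=\e^{-\v}$ straight in the ladder relations; it is most safely carried out with a computer algebra system.
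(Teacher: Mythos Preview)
Your proposal is correct and is exactly the standard derivation underlying the paper's proof, which is a one-line citation of Wang \etal\ \cite[Theorem~4]{refWZC20a} together with the observation that $\widetilde A_n(x)>0$ on $\R$ when $t<0$; you supply that observation explicitly and spell out the ladder-operator/normal-form computation that the citation encapsulates. One small slip: to remove the first-derivative term in $y''+\mathcal{Q}_n y'+\mathcal{T}_n y=0$ via $y=g(x)u$ you need $g'/g=-\tfrac12\mathcal{Q}_n$, not $+\tfrac12\mathcal{Q}_n$; with the correct sign, integrating $\mathcal{Q}_n=\dfrac{\d}{\d x}\ln(\w/\widetilde A_n)$ does give $g=\sqrt{\widetilde A_n/\w}$ and hence $u=\mathcal{S}_n\sqrt{\w/\widetilde A_n}$, so your subsequent identification of $F=\mathcal{T}_n-\tfrac14\mathcal{Q}_n^2-\tfrac12\mathcal{Q}_n'$ and the rest of the argument stand.
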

\begin{proof} 
See \cite[Theorem 4]{refWZC20a} and note that $\widetilde{A}_n>0$ when $t<0$. 
\end{proof}

\begin{theorem}
Let $\{\mathcal{S}_n(x)\}_{n=0}^{\infty}$ be the monic generalised sextic Freud polynomials orthogonal with respect to the weight
\eqref{l0w}
and let $x_{k}$, $k\in\{1,2,\dots,n\}$, denote the $n$ zeros of $\mathcal{S}_n$ in ascending order. Then, for $t>0$, 
\begin{itemize}
\item[(i)] if $F(x)$ given in \eqref{Fc} is strictly increasing on $(a,b)$, then, for the zeros $x_k \in (a,b)$, we have $x_{k+2}-x_{k+1}<x_{k+1}-x_k$, i.e. the zeros in $(a,b)$ are concave; 
\item[(ii)] if $F(x)$ given in \eqref{Fc} is strictly decreasing on $(a,b)$, then, for the zeros $x_k\in(a,b)$, we have $x_{k+2}-x_{k+1}>x_{k+1}-x_k$, i.e. the zeros in $(a,b)$ are convex.\end{itemize}
\end{theorem}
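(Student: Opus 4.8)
The plan is to feed the normal-form differential equation of the preceding theorem into the classical Sturm convexity theorem. Since here $\la=-\tfrac12$, the preceding theorem supplies the normal-form equation \eqref{eq:gende}, namely $S_n''+F(x)S_n=0$ with $F$ as in \eqref{Fc}, for the functions $S_n(x;t,-\tfrac12)$ defined by \eqref{trans}. The factor relating $S_n$ to $\mathcal{S}_n$ is the multiplier $\sqrt{\w(x)/\widetilde{A}_n(x)}$, which is real, strictly positive and nowhere vanishing on any subinterval of $(a,b)$ on which $\widetilde{A}_n(x)>0$. Hence on such an interval $S_n(x;t,-\tfrac12)$ and $\mathcal{S}_n(x)$ have exactly the same zeros (with the same multiplicities), so it suffices to establish the stated spacing property for the zeros of the solution $S_n$ of the normal-form equation.

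Next I would invoke the Sturm convexity theorem (cf.~\cite{refSturm}) for $y''+F(x)y=0$ on $(a,b)$. The mechanism is the usual Sturm comparison argument applied to the two consecutive intervals $[x_k,x_{k+1}]$ and $[x_{k+1},x_{k+2}]$ between zeros of $S_n$: the interval on which $F$ is the larger is the shorter one. Consequently, if $F$ is strictly increasing on $(a,b)$, then every consecutive triple of zeros $x_k<x_{k+1}<x_{k+2}$ lying in $(a,b)$ satisfies $x_{k+2}-x_{k+1}<x_{k+1}-x_k$, giving (i); if $F$ is strictly decreasing on $(a,b)$, the comparison reverses and we obtain $x_{k+2}-x_{k+1}>x_{k+1}-x_k$, giving (ii). Transporting these inequalities back to $\mathcal{S}_n$ via the identification of zeros from the first step completes the argument.

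The step requiring the most care is checking that Sturm's hypotheses genuinely hold on $(a,b)$: one needs the normal form \eqref{eq:gende} to be valid there, which requires $\widetilde{A}_n(x)\neq 0$ (indeed $\widetilde{A}_n(x)>0$) throughout $(a,b)$ so that the transformation \eqref{trans} is non-singular and the zeros of $S_n$ and $\mathcal{S}_n$ coincide; and one needs $S_n$ actually to oscillate in $(a,b)$, i.e.\ to possess the three consecutive zeros $x_k,x_{k+1},x_{k+2}$, which is precisely the regime described in the statement. Granting the positivity of $\widetilde{A}_n$ on the interval under consideration, the rest of the proof is a direct application of the Sturm convexity theorem to the normal form furnished by the previous theorem.
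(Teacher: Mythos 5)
Your proposal is correct and takes essentially the same route as the paper: the strictly positive multiplier $\sqrt{\w(x)/\widetilde{A}_n(x)}$ in \eqref{trans} leaves the zeros unchanged, and the Sturm convexity theorem applied to the normal form \eqref{eq:gende} yields the spacing inequalities in (i) and (ii). The positivity of $\widetilde{A}_n$ that you rightly flag is exactly the point recorded in the proof of the preceding theorem (where $\widetilde{A}_n>0$ is noted for $t<0$), so your extra care there matches the paper's implicit hypothesis rather than introducing a new ingredient.
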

\begin{proof}
Since the transformation \eqref{trans} does not change the independent variable and $\w(x)>0$, the zeros of $\mathcal{S}_n(x)$ are the same as those of $S_n(x;t,\la)$. The result now follows by applying the Sturm convexity Theorem (cf.~\cite{refJT,refSturm}) to solutions of \eqref{eq:gende}.
\end{proof}
\section{Discussion}
In this paper, we have studied generalised Airy polynomials that are orthogonal polynomials that satisfy a three-term recurrence relation whose coefficients depend on two parameters. We have derived a differential difference equation, a differential equation and a mixed recurrence relation satisfied by the polynomials and used these to study properties of the zeros and recurrence coefficients of the polynomials. We also investigated various asymptotic properties of the recurrence coefficients. Furthermore we have shown that similar results hold for the generalised sextic Freud polynomials and corrected some results in the literature.   
\section*{Acknowledgements} 
We gratefully acknowledge the support of a Royal Society Newton Advanced Fellowship NAF$\backslash$R2$\backslash$180669.
PAC would like to thank the Isaac Newton Institute for Mathematical Sciences for support and hospitality during the programme ``Complex analysis: techniques, applications and computations" when some of the work on this paper was undertaken. This work was supported by EPSRC grant number EP/R014604/1. We also thank the referees for helpful comments and corrections.

\def\cpam{Commun. Pure Appl. Math.}
\def\CPAM{Commun. Pure Appl. Math.}
\def\funk{Funkcial. Ekvac.}
\def\FUNK{Funkcial. Ekvac.}
\def\CUP{Cambridge University Press}

\def\refpp#1#2#3#4{\vspace{-0.2cm}
\bibitem{#1} \textrm{\frenchspacing#2}, \textrm{#3}, #4.}

\def\refjnl#1#2#3#4#5#6#7{\vspace{-0.2cm}
\bibitem{#1}{\frenchspacing\rm#2}, #3, 
{\frenchspacing\it#4}, {\bf#5} (#6) #7.}

\def\refjltoap#1#2#3#4#5#6#7{\vspace{-0.2cm}
\bibitem{#1}\textrm{\frenchspacing#2}, \textrm{#3},
\textit{\frenchspacing#4}\ (#6)\ #7.}

\def\refjl#1#2#3#4#5#6#7{\vspace{-0.2cm}
\bibitem{#1}\textrm{\frenchspacing#2}, \textrm{#3},
\textit{\frenchspacing#4}, \textbf{#5}\ (#7)\ #6.}
\def\refbk#1#2#3#4#5{\vspace{-0.2cm}
\bibitem{#1} \textrm{\frenchspacing#2}, \textit{#3}, #4, #5.}

\def\refcf#1#2#3#4#5#6#7{\vspace{-0.2cm}
\bibitem{#1} \textrm{\frenchspacing#2}, \textrm{#3},
in: \textit{#4}, {\frenchspacing#5}, #6, pp.\ #7.}
\def\ams{American Mathematical Society}
\def\DE{Diff. Eqns.}
\def\JPA{J. Phys. A}
\def\NMJ{Nagoya Math. J.}
\def\JCAM{J. Comput. Appl. Math.}

\end{document}